\newcommand{\makecircled}[2][\mathord]{#1{\mathpalette\make@circled{#2}}}
\newcommand{\make@circled}[2]{%
  \begingroup\m@th
  \sbox\z@{$#1A$}%
  \sbox\tw@{%
    \raisebox{\depth}{%
      \vphantom{$#1A$}%
      \ooalign{%
        \hidewidth$#1\make@smaller{#1}{#2}$\hidewidth\cr
        $#1\bigcirc$\cr
      }%
    }%
  }%
  \resizebox{!}{\ht\z@}{\box\tw@}%
  \endgroup
}
\newcommand{\make@smaller}[2]{%
  \vcenter{\hbox{\scalebox{0.7}{$\m@th#1#2$}}}%
}
\newcommand{\Crr}{\mathbb{C}^{r\times r}} 
\newcommand{\Cnn}{\mathbb{C}^{n\times n}} 
\newcommand{\Crnr}{\mathbb{C}^{r\times (n-r)}} 
\newcommand{\Crjrj}{\mathbb{C}^{r_j \times r_j}}
\newcommand{\Crkjrkj}{\mathbb{C}^{r_{kj} \times r_{kj}}}
\newcommand{\Cmn}{\mathbb{C}^{m \times n}}
\newcommand{\Cnm}{\mathbb{C}^{n \times m}}
\newcommand{\rk}{\operatorname{rk}}
\theoremstyle{plain}
\newtheorem{theorem}{Theorem}
\newtheorem{corollary}[theorem]{Corollary}
\newtheorem{proposition}[theorem]{Proposition}
\newtheorem{lemma}[theorem]{Lemma}
\theoremstyle{definition}
\newtheorem{definition}[theorem]{Definition}
\newtheorem{example}[theorem]{Example}
\newtheorem{remark}[theorem]{Remark}
\begin{document}

\title{Lattice properties of the sharp partial order}

\author{Cecilia R. Cimadamore\footnote{Departamento de Matemática, Universidad Nacional del Sur (UNS), Bahía Blanca, Argentina. Instituto de Matemática (INMABB), Universidad Nacional del Sur (UNS)-CONICET, Bahía Blanca, Argentina. https://orcid.org/0009-0007-5252-2352, E-mail: {\tt crcima@criba.edu.ar}.}, 
 \;  
 Laura A. Rueda\footnote{Departamento de Matemática, Universidad Nacional del Sur (UNS), Bahía Blanca, Argentina. Instituto de Matemática (INMABB), Universidad Nacional del Sur (UNS)-CONICET, Bahía Blanca, Argentina. E-mail: {\tt laura.rueda@uns.edu.ar}.}, 
 \;  N\'estor Thome\footnote{Instituto Universitario de Matem\'atica Multidisciplinar, Universitat Polit\`ecnica de Val\`encia, 46022,
  Valencia, Spain. E-mail: {\tt njthome@mat.upv.es}.}, 
  \;
   Melina V. Verdecchia\footnote{Departamento de Matemática, Universidad Nacional del Sur (UNS), Bahía Blanca, Argentina. Instituto de Matemática (INMABB), Universidad Nacional del Sur (UNS)-CONICET, Bahía Blanca, Argentina. E-mail: {\tt mverdec@uns.edu.ar}.}.
}

\date{}

\maketitle

\begin{abstract}The aim of this paper is to study lattice properties of the sharp partial order for complex matrices having index at most 1. We investigate the down-set of a fixed matrix $B$ under this partial order via isomorphisms with two different partially ordered sets of projectors. These are, respectively, the set of projectors that commute with a certain (nonsingular) block of a Hartwig-Spindelböck decomposition of $B$ and the set of projectors that commute with the Jordan canonical form of that block. Using these isomorphisms, we study the lattice structure of the down-sets and we give properties of them. Necessary and sufficient conditions under which the down-set of B is a lattice were found, in which case we describe its elements completely. We also show that every down-set of $B$ has a distinguished Boolean subalgebra and we give a description of its elements. We characterize the matrices that are above a given matrix in terms of its Jordan canonical form. Mitra (1987) showed that the set of all $n \times n$ complex matrices having index at most 1 with $n\geq 4$ is not a lower semilattice. We extend this result to $n=3$ and prove that it is a lower semilattice with $n=2$. We also answer negatively a conjecture given by Mitra, Bhimasankaram and Malik (2010). As a last application, we characterize solutions of some matrix equations via the established isomorphisms.
\end{abstract}

AMS Classifications: 06A06 (primary), 15A09 (secondary), 15A21 (secondary).  

Keywords: Sharp partial order, Hartwig-Spindelböck factorization, lattice structure, Jordan canonical form.

\section{Introduction}\label{sec:introduccion2}

In this paper we work with the sharp partial order defined on the set of complex matrices that have group inverse. This order were defined by Mitra in \cite{Mi}. Later, Mitra, Bhimasankaram and Malik made in \cite{MiBhMa} an extensive study of this partial order and obtained interesting properties. The sharp partial order was also studied by other authors. Gro{\ss} in \cite{Grob} worked with this partial order and characterized the matrices that are above a given matrix using the Hartwig-Spindelböck decomposition of the matrices. More recently, Marovt in \cite{Mar} and Rakić and Djordjević \cite{RaDj} studied this partial order in a ring with involution. Cvetkovi\'{c}-Ili\'{c}, Mosi\'{c} and Wei \cite{CvMoWe}, and, Jose and Sivakumar \cite{ShSi} analized the sharp order for the case of bounded linear Hilbert space operators.

The set of complex $ m \times n $ matrices is denoted by $ \mathbb{C}^{m \times n} $. The conjugate transpose, range, and rank of $ A \in \Cmn $ are denoted by $A^*$, $ \mathcal{R}(A) $, and $ \rk(A) $, respectively. The set of all $n \times n$ complex matrices having index at most 1, that is, $\rk(A^2)= \rk(A) $, is denoted by $ \mathbb{C}_1^n$. It is known that $A\in \mathbb{C}_1^n$ if and only if $A$ has group inverse, that is, there exists a (unique) square matrix $A^\sharp$ that satisfies $AA^\sharp A=A$, $A^\sharp A A^\sharp=A^\sharp$, and $AA^\sharp=A^\sharp A$. The identity matrix of order $n\times n$ is denoted by $I_n $ and zero matrices are denoted simply by $O$. Let us recall that given a matrix $B$ where $0< r = \rk (B)$ and the $ r $ positive singular values $ \sigma_1, \dots , \sigma_r $ of $ B $ are ordered in decreasing order, a Hartwig-Spindelböck decomposition of $ B $ (see \cite{HaSp}) is given by
\begin{equation} \label{hartwigB}
B= U \begin{bmatrix}
\Sigma K & \Sigma L \\
O & O \\
\end{bmatrix} U^*,
\end{equation}
where $U\in \Cnn$ is unitary, $ \Sigma= \textrm{diag} ( \sigma_1 , \dots , \sigma_r ) \in
\Crr $ is a diagonal matrix with the $ r $ positive singular values $ \sigma_1, \dots , \sigma_r $ of $ B $ in its diagonal, $ K \in \Crr $ and $ L \in \Crnr $ satisfy $ K K^* + L L^* = I_r $ (note that $ L $ is absent when $ r = n $). It is worth mentioning that this decomposition always exists but it is not necessarily unique, and that $ B \in \mathbb{C}_1^n $ if and only if $ K $ is nonsingular.

The main aim of this paper is to study lattice properties of the sharp partial order. Malik, Rueda and Thome in \cite{MaRuTh} characterized the matrices that are below a given matrix $B$ under the sharp partial order by using a type (\ref{hartwigB}) decomposition  of $B$. In that work, it is proved that there exists a bijection between the set of matrices that are below $B$ and a certain set of projectors of the same size as $\Sigma $ that commute with $\Sigma K$. In this paper, we prove that this bijection is an isomorphism by considering the set of projectors ordered by the partial order $T_1\leq T_2$ if and only if $T_1= T_1T_2= T_2T_1$. This isomorphism is our starting point for the development of the paper. It brings significant advantages not only because the sizes of the matrices associated with projectors can be considerably smaller than the matrices that are below $B$ but also because these projectors provide us a much easier way to work than by using directly the original matrices.

Many authors worked with projectors to characterize partial orders. This shows the importance of them in finite and infinite dimensions. See, for example,
\cite{CiRuThVe,Marvot2,Marovt3,MiBhMa,Semrl}.

We first establish some lattice properties of the down-set of a fixed matrix $B$, by using this isomorphism. We start by computing the infimum and the supremum of two matrices in $[O, B]$ with commuting associated projectors. We prove that if the down-set is a lattice then it is complemented of finite height and if it is, in addition, a distributive lattice then it is a Boolean algebra. We also show that any interval $[A_1, A_2]=\{A\in\mathbb{C}_1^{n}: A_1\stackrel{_\sharp}\leq  A \stackrel{_\sharp}\leq A_2 \}$, for $A_1\stackrel{_\sharp}\leq  A_2$, of the poset $ \mathbb{C}_1^{n}$ is isomorphic to a down-set, namely $[O, A_2-A_1]$. This is why studying the down-sets are not only important by themselves, but also contributes to the study of the poset $\mathbb{C}_1^{n}$.

On the other hand, we established another isomorphism between the set of projectors that commute with $\Sigma K$ and the set of projectors that commute with the Jordan canonical form of $\Sigma K$. This isomorphism becomes very useful to study, in a more detalied way, the matrices that are below $B$. This tool allows us to characterize when the down-set of $B$ is a lattice. In such a case, we describe it completely and establish some properties of it. We also show that every down-set of $B$ has a distinguished Boolean subalgebra and we give a description of its elements.

In \cite[Theorem 4.3.13]{MiBhMa} Mitra, Bhimasankaram and Malik give a characterization of the matrices $A$ that are below a given matrix $B$ by means of the Jordan decomposition of $B$, for the particular case of the geometric multiplicity of each nonnull eigenvalue equals 1. They also formulate the conjecture that states that the theorem given is valid even if the number of Jordan blocks corresponding to some or all distinct nonnull eigenvalues is greater than 1. We show that this conjecture is not true. In addition, we present a result that characterizes the matrices that are above a given matrix $A$ in terms of the Jordan decomposition of the matrix $A$. We also prove that the set of all $n \times n$ complex matrices having index at most 1 is a lower semilattice if and only if $n\leq 2$. In this way, we extend to $n=3$ the result demonstrated by Mitra in \cite{Mi} which states that this poset is not a lower semilattice for $n\geq 4$.

In \cite{FeLeTh} the authors presented necessary and sufficient conditions for the $k$-commutative equality $B^kX=XB^k$, where $X$ is an outer generalized inverse of the square matrix $B$. Similar matrix equations of type $B^kX^kB^k=B^k$ and $X^kB^kX^k=X^k$, for all $k$ positive integer and where $B\in \mathbb{C}_1^n$ were studied previously in \cite{RaoMi}. As an application of the isomorphism between the set of matrices that are below $B$ and  the set of projectors that commute with $\Sigma K$, we analize certain matrix systems that have equalities $B^kX=XB^k$ and $XB^kX=XB^k$, for all $k$ positive integer, and where $X$ is idempotent. 

This paper is organized as follows. After a brief section of preliminaries, we devote the next two sections to the study of the down-set $[O, B]$ by showing that such a set is isomorphic to two posets of projectors that satisfy some conditions linked to the matrix $\Sigma K$  obtained in decomposition (1). In Section \ref{sec:down-setofsharp}, we work with the poset of projectors commuting with $\Sigma K$ and in Section \ref{sec:deltasharp} with that of projectors commuting with the Jordan canonical form of $\Sigma K$. We also present here several results including the characterization of $[O, B]$ as a lattice.
In Section \ref{sec: Boolean algebra}, we prove the existence of a featured Boolean algebra within the down-set of a matrix $B$ and describe its elements. In Section \ref{sec:posetc1n}, we solve the conjecture formulated in \cite{MiBhMa}. We characterize matrices that are above a given matrix by using its Jordan decomposition, and we analize when $\mathbb{C}_1^n$ is a lower semilattice. Finally, in Section \ref{sec:matrixeq}, we provide solutions to some matrix equations by using the isomorphism given in Section \ref{sec:down-setofsharp}. 

\section{Preliminaries}\label{sec:preliminares}

For the sake of completeness, we recall some basic definitions of structures defined over a partially ordered set that are used throughout this paper. Recall that a partially ordered set (a poset from now on) $(R,\leq)$ is a lower semilattice if for every $a, b\in R$ the greatest lower bound (or infimum) $a\wedge b$ of $\{a, b\}$ exists. The poset $(R,\leq)$ is a lattice if for every $a, b\in R$ both the least upper bound (or supremum) $a\vee b$ and the infimum $a\wedge b$ of $\{a, b\}$ exist. Given two lattices $R$ and $S$, the direct product $R\times S=\{\langle a, b\rangle: a\in R \text{ and } b\in S\}$ is also a lattice where $\langle a_1, b_1\rangle\vee \langle a_2, b_2\rangle= \langle a_1\vee a_2, b_1\vee b_2\rangle$ and $\langle a_1, b_1\rangle\wedge \langle a_2, b_2\rangle= \langle a_1\wedge a_2, b_1\wedge b_2\rangle$. Two elements $a,b$ of a lattice with first element $0$ and greatest element $1$ are complementary if $a\vee b=1$ and $a\wedge b=0$. A complemented lattice is a lattice in which every element $a$ has a complement $a'$. A distributive lattice is a lattice which satisfies either (and hence, as it is easy to see, both) of the distributive laws $a\wedge (b\vee c)= (a\wedge b)\vee (a\wedge c)$ or $a\vee (b\wedge c)= (a\vee b)\wedge (a\vee c)$. Finally, a Boolean algebra is a complemented distributive lattice. Let $ R$ and $ S $ be two posets. A map $\phi\colon R\to S$ is order-preserving if $\phi(a)\leq \phi(b)$ whenever $a\leq b$. We say that $R$ and $S$ are isomorphic if there exists a bijection $\phi$ from $R$ to $S$ such that both $\phi$ and $\phi^{-1}$ are order-preserving. In that case, $\phi$ is called an isomorphism and we write $R \simeq S$. We refer the reader to \cite{Sanka} for more information about the different structures defined above.

If $A\in \mathbb{C}_1^n$ then there exists a (unique) square matrix $X$ that satisfies $AXA=A$, $XAX=X$, and $AX=XA$. This matrix $X$ is called the group inverse of $A$ and it is denoted by $A^\sharp$. For a given matrix $A\in\Cmn$, there exists a unique matrix $ X \in \Cnm $ such that $AX$ and $XA$ are Hermitian, $AX A = A$, and $X A X = X$, which is called the Moore-Penrose inverse of $ A $ and it is denoted by $ A^\dag $.

For $A, B\in \mathbb{C}_1^{n}$, let us recall that the sharp partial order is defined by: $A \stackrel{_{\sharp}}\leq B $ if and only if $ A^\sharp A= A^\sharp B$ and $AA^\sharp = BA^\sharp $, which is equivalent to $A^2=AB=BA$. 

 For a fixed matrix $B$, by using decomposition (\ref{hartwigB}), the predecessors $A$ under the sharp partial order in $\mathbb{C}_1^{n}$ are characterized as follows.

\begin{theorem}\cite[Theorem 5]{MaRuTh}\label{teorema:predecesor} Let $B\in \mathbb{C}_1^{n}$ be a nonzero matrix written as in (\ref{hartwigB}). The following conditions are equivalent. 
\begin{enumerate}[(a)]
 \item There exists a matrix $A\in \mathbb{C}_1^n$  such that 
 $A\stackrel{_\sharp}\leq B$,
 \item There exists an idempotent matrix $T\in \Crr$ such that 
 
    \begin{equation}\label{hartwigAgrupo}
    A = U \begin{bmatrix}
    T\Sigma K  &  T \Sigma L \\
    O & O \\
    \end{bmatrix} U^*,
    \end{equation}

where $T\Sigma K = \Sigma K T$.

\end{enumerate}

\end{theorem}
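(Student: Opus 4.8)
The plan is to prove the equivalence $(a)\Leftrightarrow(b)$ by translating the algebraic characterization of the sharp order, namely $A^2 = AB = BA$, into the block language provided by the Hartwig--Spindelböck decomposition (\ref{hartwigB}). First I would establish the easier direction $(b)\Rightarrow(a)$: given an idempotent $T\in\Crr$ with $T\Sigma K = \Sigma K T$, I would verify directly that the matrix $A$ defined by (\ref{hartwigAgrupo}) satisfies $A\in\mathbb{C}_1^n$ and $A\stackrel{_\sharp}\leq B$. Concretely, I would form $A^2$, $AB$, and $BA$ in the $U(\cdot)U^*$ block form and check the three products coincide. Since all three matrices share the factor $U$ on the outside, it suffices to compute the inner $2\times 2$ block products; the commutation relation $T\Sigma K = \Sigma K T$ together with $T^2=T$ is exactly what makes $A^2 = AB = BA$ hold at the block level. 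One must also confirm that $A$ has index at most $1$, which (by the remark after (\ref{hartwigB})) amounts to checking that the relevant upper-left block $T\Sigma K$ inherits nonsingularity on its range, so that $A$ indeed lies in $\mathbb{C}_1^n$.

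For the converse $(a)\Rightarrow(b)$, the approach is to take an arbitrary $A\in\mathbb{C}_1^n$ with $A\stackrel{_\sharp}\leq B$, write $A$ in the same unitary basis as $B$, say $A = U M U^*$ with $M$ a $2\times 2$ block matrix partitioned conformally with $B$, and then use the equations $A^2=AB=BA$ to force $M$ into the required shape. Writing $M = \begin{bmatrix} M_{11} & M_{12} \\ M_{21} & M_{22}\end{bmatrix}$, the conditions $AB=BA$ and $A^2=AB$ become a system of block matrix equations involving $M_{ij}$, $\Sigma K$, and $\Sigma L$. The key leverage is that $\Sigma K$ is nonsingular (because $B\in\mathbb{C}_1^n$), so the equations that a priori only constrain products by $\Sigma K$ can be solved to pin down the blocks: I expect the lower blocks $M_{21}, M_{22}$ to vanish and the upper blocks to take the form $M_{11}=T\Sigma K$, $M_{12}=T\Sigma L$ for a single matrix $T$, with $T$ forced to be idempotent and to commute with $\Sigma K$ by the remaining equations.

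The main obstacle I anticipate is the bookkeeping in the converse direction: extracting from the system $A^2=AB=BA$ precisely that a common factor $T$ governs both the $M_{11}$ and $M_{12}$ blocks, and that this $T$ is simultaneously idempotent and commutes with $\Sigma K$. In particular, showing $M_{21}=O$ and $M_{22}=O$ will likely require combining several block equations and invoking the nonsingularity of $\Sigma K$ (and possibly the rank/index condition on $A$) rather than a single equation. Defining $T := M_{11}(\Sigma K)^{-1}$, I would then verify that $M_{12}=T\Sigma L$ and that the remaining equations translate exactly into $T^2=T$ and $T\Sigma K = \Sigma K T$. Since this is the statement of a previously published result (\cite[Theorem 5]{MaRuTh}), I would expect the authors' proof either to reproduce this block computation or to cite it and proceed; either way, the crux is the careful block-wise solution of the order equations using the invertibility of $\Sigma K$.
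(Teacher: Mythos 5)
The paper does not prove this statement at all: it is imported verbatim as \cite[Theorem 5]{MaRuTh}, so there is no in-paper argument to compare against. Your outline is a correct reconstruction of the standard block computation behind that cited result, and it matches the approach one would expect: direction $(b)\Rightarrow(a)$ is the routine verification of $A^2=AB=BA$ using $T^2=T$ and $T\Sigma K=\Sigma K T$, plus the rank check $\rk(A)=\rk(T)=\rk(A^2)$ via the full row rank of $[\,\Sigma K\ \ \Sigma L\,]$. One point worth making explicit in the converse: writing $A=UMU^*$, the equations $AB=BA$ and $A^2=AB$ together with the nonsingularity of $\Sigma K$ give $M_{21}=O$ but only $M_{22}^2=O$, so the hypothesis $A\in\mathbb{C}_1^n$ is genuinely needed (e.g.\ via $\mathcal{R}(A)=\mathcal{R}(A^2)$ and the vanishing of the bottom block rows of $A^2$) to conclude $M_{22}=O$ before setting $T=M_{11}(\Sigma K)^{-1}$; you correctly flagged this as the crux, so the plan is sound.
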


\begin{remark} For each $A\in \mathbb{C}_1^n$ such that $A\stackrel{_\sharp}\leq B$, it is easy to see that there exists a unique matrix $T \in \Crr$ in the conditions indicated in Theorem \ref{teorema:predecesor}.
 
\end{remark}

We recall that if $A\in \mathbb{C}_1^{n} $ is a matrix written as in (\ref{hartwigAgrupo}) (see \cite[Lemma 3]{MaRuTh}) then 

\begin{equation}\label{inversa grupo}
A^{\sharp} =U \begin{bmatrix}
(T\Sigma K)^{\sharp}  & (T\Sigma K)^{\sharp} K^{-1} L  \\
O & O \\
\end{bmatrix}U^*. 
\end{equation}
Moreover, if $T$ is idempotent and $T\Sigma K=\Sigma K T $ then $(T\Sigma K)^{\sharp}= (\Sigma K)^{-1}T$.

\begin{remark} If $A\stackrel{_\sharp}\leq B$ then, by \cite{BaTr}, $B^\sharp-A^\sharp=U \begin{bmatrix}
(\Sigma K)^{-1}(I_r-T)  & (\Sigma K)^{-1}(I_r-T)K^{-1} L  \\
O & O \\
\end{bmatrix}U^*= (B-A)^{\sharp}$ (see \cite[Theorem 6.2]{ShSi}).
 
\end{remark}

For a given matrix $M\in \Crr$ with $s$ distinct eigenvalues $\{\lambda_1, \lambda_2, \ldots, \lambda_s\}$,
the well-known Jordan decomposition theorem states that there exists a nonsingular matrix $P\in \Crr$ such that $M = PJP^{-1} $ and \begin{equation} \label{Jordan Form}
J= \textrm{diag} (J(\lambda_1), J(\lambda_2), \cdots, J(\lambda_s)),
\end{equation} 
is a block diagonal matrix called the Jordan canonical form of $M$. Here each block $J(\lambda_j)$, for $1\leq j\leq s$, is given by
\begin{equation} \label{Jordan Form 2}
J(\lambda_j) = \textrm{diag}(J_1(\lambda_j), J_2(\lambda_j), \dots, J_{t_j}(\lambda_j))   \in \Crjrj \end{equation}
with
$$J_k(\lambda_j) =  \begin{bmatrix}
\lambda_j & 1 &  &  \\
 &\ddots  & \ddots &  \\
 & &\ddots & 1 \\
 &  & & \lambda_j
\end{bmatrix} \in \Crkjrkj , \quad  1 \leq k \leq t_j.
$$ 
The matrices $J_k(\lambda_j)$ are the Jordan blocks of $J$. We recall that the geometric multiplicity of $\lambda_j$ corresponds to the number of Jordan blocks of $J$ whose eigenvalue is $\lambda_j$, that is the geometric multiplicity of $\lambda_j$ is equal to $t_j$. 

Next, we need the concept of regular upper triangular matrix \cite{Cu}.
\begin{definition}
A matrix $X\in\Crr$ is called a regular upper triangular matrix (RUTM) if it is upper triangular and all the elements
 on any given subdiagonal are equal. This matrix is also known as an upper triangular Toeplitz matrix.
\end{definition}

\begin{remark} \label{idempotente RUTM}
Notice that if
\begin{equation}\label{nilpotente_cero}
N =  \begin{bmatrix}
0 & 1 &  &  \\
 &\ddots  & \ddots &  \\
 & &\ddots & 1 \\
 &  & & 0
\end{bmatrix} \in \Crr
\end{equation}
 and $X$ is a RUTM then $X = a_1 I_r + a_2 N + a_3 N^2 +\dots+ a_{r} N^{r-1}$ for some $a_1,a_2,\dots,a_r \in {\mathbb C}$. 
In addition, if $X^2=X$ then $X=O$ or $X=I_r$. Indeed, $X^2= 
a_1^2 I_r + 2a_1a_2 N + \dots+ (\sum_{i=1}^{l+1} a_ia_{l+2-i})N^l+ \dots + (\sum_{i=1}^{r} a_ia_{r+1-i})N^{r-1}$, and comparing coefficients it follows that
$a_1\in \{0,1\}$ and $a_i=0$, $2\leq i\leq r$.
\end{remark}

Let $T\in \Crr$ and $J$ be written as in (\ref{Jordan Form}). In \cite{Cu}  the matrices $T$ that commute with a matrix $J$ given in Jordan canonical form are characterized. More precisely, by \cite[Theorem 5.16]{Cu},
\begin{equation}\label{TJ=JT}
TJ=JT \qquad \Leftrightarrow \qquad
T= \textrm{diag} ( D_1 , D_2, \dots , D_s ), 
\end{equation}

\noindent where
\begin{equation}\label{Tsss}
D_j=[R_{ik}]\in \Crjrj , 1\leq i, k\leq t_j
\end{equation}
such that the submatrices $R_{ik}\in \mathbb{C}^{r_{ij}\times r_{kj}}$ satisfy that \[R_{ik}J_k(\lambda_j)= J_i(\lambda_j)R_{ik},\] for 
$J_k(\lambda_j)\in \Crkjrkj$, $J_i(\lambda_j)\in \mathbb{C}^{r_{ij}\times r_{ij}}$. Applying again \cite[Theorem 5.16]{Cu}, we obtain that 
\begin{equation}\label{cullen}
R_{ik} = \left\{\begin{array}{cl}
\left[\begin{array}{c}
X_{ik}\\
O
\end{array}\right] \text{ where } X_{ik} \in \Crkjrkj \text{ is a RUTM}, & \text{if }  r_{ij} >r_{kj}\\
\left[\begin{array}{cc}
O &  X_{ik}
\end{array}\right] \text{ where } X_{ik} \in \mathbb{C}^{r_{ij}\times r_{ij}} \text{ is a RUTM}, & \text{if } r_{ij}< r_{kj} \\
\begin{array}{cc}
X_{ik}
\end{array} \text{ where } X_{ik} \in \Crkjrkj \text{ is a RUTM}, & \text{if } r_{ij}= r_{kj} 
\end{array}
\right. .
\end{equation}

\section{Down-set of $B$ via projectors that commutes with $\Sigma K$}\label{sec:down-setofsharp}

For any fixed matrix $B\in \mathbb{C}_1^{n}$, we will consider the down-set
 \[[O, B]=\{A\in \mathbb{C}_1^{n} : O\stackrel{_{\sharp}}\leq A \stackrel{_{\sharp}}\leq B\}.\] 
Clearly, $\left([O, B],\stackrel{_{\sharp}}\leq\right)$ is a poset where $O$ is the least element and $B$ the greatest one. 

According to Theorem \ref{teorema:predecesor}, we consider the set 
     
\[\tau= \{ T \in \Crr : T^2 = T\ \textrm{and } \ T\Sigma K = \Sigma K T\} \] 
\noindent endowed with the partial order given by $T_1\leq T_2 $ if and only if $T_1=T_1T_2=T_2T_1$. Note that if $T\in \tau$ then $T\in\mathbb{C}_1^r $. The least element and the greatest element of $\tau$ are $O$ and $I_r$, respectively. 

In this section, we prove that the down-set $[O, B]$ is isomorphic to the poset $\tau$, which is our starting point for the forthcoming results of this paper. From this isomorphism, we begin the investigation of the ordered structure of $[O, B]$ establishing in this section some lattice properties of the down-set of $B$. Note that matrices $T\in\tau $ are projectors that belong to ${\mathbb C}^{r \times r}$ (instead of ${\mathbb C}^{n \times n}$), with $ 0 < r \leq n, $ where $ r $ can be  considerably smaller than $ n $. So, working with the matrices $T\in\tau $ is easier than using the matrices $A$.

In order to prove the isomorphism, we define the map \[\phi\colon [O, B]\to \tau\] by $\phi(A)=T$ where $T$ is the corresponding projector from Theorem \ref{teorema:predecesor}. As for each $A$, the matrix $T$ is unique, we can assure that $\phi $ is well-defined and it is easy to prove that $\phi$ is a bijection.

\begin{theorem}\label{isomorfismo}Let $B\in \mathbb{C}_1^{n}$ be a nonzero matrix written as in (\ref{hartwigB}). The posets $ [O, B] $ and $ \tau $ are isomorphic. Moreover, the rank function is preserved under the isomorphism $\phi$.
\end{theorem}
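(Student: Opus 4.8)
The plan is to lean on the fact, already noted in the excerpt, that $\phi$ is a well-defined bijection, so that proving the isomorphism reduces to establishing the single equivalence
$$A_1 \stackrel{_\sharp}\leq A_2 \quad \Longleftrightarrow \quad \phi(A_1)\leq \phi(A_2),$$
which simultaneously yields that $\phi$ and $\phi^{-1}$ are order-preserving. Writing $A_i = U \begin{bmatrix} T_i\Sigma K & T_i \Sigma L \\ O & O \end{bmatrix}U^*$ with $T_i\in\tau$ (so $T_i^2=T_i$ and $T_i\Sigma K=\Sigma K T_i$), I would translate both sides through the characterization of the sharp order recalled in the preliminaries, namely $A_1 \stackrel{_\sharp}\leq A_2$ iff $A_1^2 = A_1A_2 = A_2A_1$. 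Since $U$ is unitary and $A_iA_j = U M_iM_j U^*$ for the inner blocks $M_i = \begin{bmatrix} T_i\Sigma K & T_i \Sigma L \\ O & O \end{bmatrix}$, this is equivalent to the same three identities at the level of the $M_i$.

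The computational heart is to expand the block products. Using $\Sigma K T_i = T_i \Sigma K$ to slide the idempotents past $\Sigma K$, together with $T_i^2 = T_i$, one obtains
$$M_1^2 = \begin{bmatrix} T_1(\Sigma K)^2 & T_1\Sigma K\Sigma L \\ O & O \end{bmatrix},\quad M_1M_2 = \begin{bmatrix} T_1T_2(\Sigma K)^2 & T_1T_2\Sigma K\Sigma L \\ O & O \end{bmatrix},$$
and $M_2M_1$ has the analogous shape with $T_2T_1$ in place of $T_1T_2$. The key structural point is that $\Sigma K$, hence $(\Sigma K)^2$, is nonsingular, because $\Sigma$ is invertible and $B\in\mathbb{C}_1^n$ forces $K$ to be invertible. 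Thus the $(1,1)$-block equalities can be cancelled on the right by $(\Sigma K)^2$: from $M_1^2 = M_1M_2$ I read off $T_1 = T_1T_2$, and from $M_1M_2 = M_2M_1$ I read off $T_1T_2 = T_2T_1$, so together $T_1 = T_1T_2 = T_2T_1$, i.e. $T_1\leq T_2$ in $\tau$. Conversely, assuming $T_1\leq T_2$, substituting $T_1T_2 = T_2T_1 = T_1$ into the three block matrices shows at once that $M_1^2 = M_1M_2 = M_2M_1$, giving $A_1 \stackrel{_\sharp}\leq A_2$. Notice that the off-diagonal $\Sigma K\Sigma L$-blocks carry no additional constraint once the $(1,1)$-blocks agree, so the nonsingular factor $\Sigma K$ alone drives the equivalence.

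For rank preservation I would observe that $\rk(A) = \rk\begin{bmatrix} T\Sigma K & T\Sigma L\end{bmatrix} = \rk\big(T[\,\Sigma K \ \ \Sigma L\,]\big)$, since $U$ is unitary. The matrix $[\,\Sigma K \ \ \Sigma L\,] = \Sigma[\,K \ \ L\,]$ has full row rank $r$: indeed $KK^* + LL^* = I_r$ exhibits $\begin{bmatrix} K^* \\ L^* \end{bmatrix}$ as a right inverse of $[\,K \ \ L\,]$, and $\Sigma$ is invertible. Right multiplication by a matrix of full row rank preserves rank, whence $\rk(A) = \rk(T) = \rk(\phi(A))$.

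The verification is essentially routine linear algebra; the only point demanding care is tracking the commutation $\Sigma K T_i = T_i\Sigma K$ throughout the block expansions and recognizing that it is precisely the invertibility of $\Sigma K$ that lets one pass from the matrix identities to the purely idempotent-theoretic relations $T_1 = T_1T_2 = T_2T_1$. I do not expect a genuine obstacle here, and the main thing to double-check is that discarding the off-diagonal blocks loses no information, which is settled by the same cancellation argument.
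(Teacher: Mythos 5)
Your proposal is correct, and it follows the same overall strategy as the paper: take the bijection $\phi$ for granted, write $A_1,A_2$ in the block form (\ref{hartwigAgrupo}), and reduce the order relation to the identities $T_1=T_1T_2=T_2T_1$. The one genuine difference is which characterization of the sharp order you translate: the paper uses $A_1^\sharp A_1=A_1^\sharp A_2$ and $A_1A_1^\sharp=A_2A_1^\sharp$, which requires first computing $A_1^\sharp$ via the formula (\ref{inversa grupo}) and makes the $T$'s appear already cancelled in the blocks, whereas you use $A_1^2=A_1A_2=A_2A_1$ and instead invoke the nonsingularity of $(\Sigma K)^2$ to cancel on the right. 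Your route avoids the group-inverse formula entirely (so it is slightly more self-contained), at the mild cost of having to check that the off-diagonal blocks impose no extra condition, which you correctly observe; your rank argument via the right-invertibility of $[\,K \ \ L\,]$ is also a complete replacement for the paper's citation of an external reference.
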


\begin{proof} Let $A_i\in [O, B]$ and $\phi(A_i)=T_i$ for $i\in \{1,2\}$. We want to see that $ A_1 \stackrel{_\sharp}\leq A_2 $ iff $T_1\leq T_2$. Since  
\begin{equation*}
    A_i = U \begin{bmatrix}
    T_i\Sigma K  &  T_i \Sigma L \\
    O & O \\
    \end{bmatrix} U^*,
    \end{equation*}
from $T_1\in\tau $ and the expression of the group inverse (\ref{inversa grupo}), we have
\begin{equation*}\label{ecuacioninversasharp}
A_1^{\sharp} =U \begin{bmatrix}
(\Sigma K)^{-1}T_1  & (\Sigma K)^{-1}T_1 K^{-1} L  \\
O & O \\
\end{bmatrix}U^*.
\end{equation*}

\noindent Thus, 
$A_1^\sharp A_1 = A_1^\sharp A_2$ iff $ \begin{bmatrix}
T_1  &  (\Sigma K)^{-1}T_1\Sigma L  \\
O & O \\
\end{bmatrix}= \begin{bmatrix}
T_1T_2  &  (\Sigma K)^{-1}T_1T_2\Sigma L  \\
O & O \\ 
\end{bmatrix}$ iff $T_1= T_1T_2$. Analogously, $A_1A_1^\sharp = A_2A_1^\sharp$ iff $\begin{bmatrix}
T_1  &  T_1K^{-1} L  \\
O & O \\
\end{bmatrix}= \begin{bmatrix}
T_2 T_1  &  T_2T_1K^{-1} L  \\
O & O \\ 
\end{bmatrix}$ iff $T_1= T_2T_1$. So, $ A_1 \stackrel{_\sharp}\leq A_2 $ iff $T_1\leq T_2$.
Hence, $[O, B] \simeq \tau$.

The proof that $\phi$ preserves the rank follows as in \cite[Theorem 3]{CiRuSaTh}.
\end{proof}

\begin{lemma}\label{lem:supinfcuandoconmutan} Let $T_1, T_2\in \tau$ be such that $T_1T_2=T_2T_1$. Then $T_1\wedge T_2$ and $T_1\vee T_2$ both exist, and, $T_1\wedge T_2= T_1T_2$ and $T_1\vee T_2= T_1+T_2-T_1T_2$.    
 
\end{lemma}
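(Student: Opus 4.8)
The plan is to verify directly that $T_1T_2$ is the infimum and $T_1+T_2-T_1T_2$ is the supremum of $\{T_1,T_2\}$ in the poset $(\tau,\leq)$, working entirely with the explicit partial order $S\leq T \Leftrightarrow S=ST=TS$. The commutativity hypothesis $T_1T_2=T_2T_1$ is what makes these two candidates land back inside $\tau$ and behave like the familiar meet and join of commuting projectors, so the first order of business is to check membership in $\tau$.

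\textbf{Step 1: membership in $\tau$.} I would first show $T_1T_2\in\tau$ and $T_1+T_2-T_1T_2\in\tau$. For idempotency, using $T_1T_2=T_2T_1$ and $T_i^2=T_i$, a short computation gives $(T_1T_2)^2=T_1T_2T_1T_2=T_1^2T_2^2=T_1T_2$, and similarly $(T_1+T_2-T_1T_2)^2=T_1+T_2-T_1T_2$ after expanding and collapsing terms via commutativity. For the commuting condition with $\Sigma K$, since $T_1$ and $T_2$ each satisfy $T_i\Sigma K=\Sigma K T_i$, both $T_1T_2$ and any polynomial in $T_1,T_2$ also commute with $\Sigma K$; explicitly $(T_1T_2)\Sigma K=T_1\Sigma K T_2=\Sigma K T_1T_2$, and the linear combination is handled term by term.

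\textbf{Step 2: lower/upper bound and extremality.} For the infimum, I would check that $T_1T_2\leq T_i$ for $i=1,2$, i.e.\ $T_1T_2=(T_1T_2)T_i=T_i(T_1T_2)$, which follows immediately from idempotency and commutativity; then for any lower bound $S$ with $S\leq T_1$ and $S\leq T_2$ I would show $S\leq T_1T_2$ by computing $S(T_1T_2)=(ST_1)T_2=ST_2=S$ and symmetrically $(T_1T_2)S=S$, using $S=ST_i=T_iS$. This establishes $T_1\wedge T_2=T_1T_2$. Dually, for the supremum I would verify $T_i\leq T_1+T_2-T_1T_2$ (multiply out $T_i(T_1+T_2-T_1T_2)$ and use idempotency/commutativity to get $T_i$), and then for any upper bound $S'$ with $T_i\leq S'$ show $T_1+T_2-T_1T_2\leq S'$ by the analogous bookkeeping, using $T_i=T_iS'=S'T_i$.

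The computations are all routine manipulations of idempotents, so there is no deep obstacle; the main point requiring care is ensuring that the candidates genuinely lie in $\tau$ (not merely that they are idempotent and commute with each other), since the partial order $\leq$ is only defined on $\tau$. The commutativity hypothesis $T_1T_2=T_2T_1$ is used at essentially every step, and I would flag that without it $T_1T_2$ need not even be idempotent, which is why the lemma is stated only for commuting pairs. Once both $T_1\wedge T_2=T_1T_2$ and $T_1\vee T_2=T_1+T_2-T_1T_2$ are established, the existence claim and the explicit formulas follow at once.
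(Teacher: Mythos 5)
Your proposal is correct and follows essentially the same route as the paper: verify membership of $T_1T_2$ and $T_1+T_2-T_1T_2$ in $\tau$ using idempotency and commutation with $\Sigma K$, then check the bound and extremality conditions directly from the definition $S\leq T \Leftrightarrow S=ST=TS$. You are in fact slightly more explicit than the paper in checking that the candidates are actually a lower (resp.\ upper) bound of $\{T_1,T_2\}$, a step the paper leaves implicit.
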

\begin{proof} From the assumptions, it is easy to show that $(T_1T_2)^2= T_1T_2$ and $(T_1T_2)\Sigma K = \Sigma K (T_1T_2) $. So, $T_1T_2\in \tau$. Moreover, if $\widetilde{T}\in \tau$ is such that $\widetilde{T}\leq T_1$ and $\widetilde{T}\leq T_2$ then $\widetilde{T}(T_1T_2)=
    (\widetilde{T}T_1)T_2=\widetilde{T}T_2=\widetilde{T} =T_1\widetilde{T} =
    T_1(T_2\widetilde{T})= (T_1T_2)\widetilde{T}$. So, $\widetilde{T} \leq T_1 T_2$, hence $T_1T_2=T_1\wedge T_2$.

If $T_1$ and $T_2$ commute then $(T_1+T_2-T_1T_2)^2= T_1+T_2-T_1T_2$ and $(T_1+T_2-T_1T_2)\Sigma K= \Sigma K(T_1+T_2-T_1T_2)$. So $T_1+T_2-T_1T_2\in \tau$. Let $\widetilde{T}\in \tau$ be such that $T_1\leq \widetilde{T}$ and $T_2\leq \widetilde{T}$. Then, 
\[(T_1+T_2-T_1T_2)\widetilde{T}=T_1 \widetilde{T}+T_2\widetilde{T}-T_1T_2\widetilde{T}= T_1+T_2-T_1T_2=\widetilde{T}(T_1+T_2-T_1T_2). \]
Therefore, $T_1+T_2-T_1T_2=T_1\vee T_2$.
\end{proof}

As an immediate consequence of the above result and the fact that $\phi$ is an isomorphism we have the following result.

\begin{corollary}Let $A_1, A_2 \in [O, B]$ be written as in (\ref{hartwigAgrupo}) such that $T_1T_2 = T_2T_1$, where $ T_i = \phi ( A_i ) $ for every $ i \in \{ 1 , 2 \} $. Then:
\begin{enumerate}[(a)]
   \item $ A_1 \wedge A_2 =A_1B^\dag A_2$,
  
    \item $ A_1\vee A_2 = A_1 + A_2 - A_1 \wedge A_2 $.
\end{enumerate}
 \end{corollary}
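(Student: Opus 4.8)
The plan is to transport the computation through the order isomorphism $\phi$ of Theorem \ref{isomorfismo}. Since $\phi$ is an isomorphism of posets, both $\phi$ and $\phi^{-1}$ carry infima to infima and suprema to suprema whenever these exist. By hypothesis $T_1=\phi(A_1)$ and $T_2=\phi(A_2)$ commute, so Lemma \ref{lem:supinfcuandoconmutan} guarantees that $T_1\wedge T_2$ and $T_1\vee T_2$ exist in $\tau$ and equal $T_1T_2$ and $T_1+T_2-T_1T_2$, respectively. Applying $\phi^{-1}$, the meet and join of $A_1,A_2$ exist in $[O,B]$ and, using the block form of Theorem \ref{teorema:predecesor}, are given by $A_1\wedge A_2=\phi^{-1}(T_1T_2)=U\begin{bmatrix}T_1T_2\Sigma K & T_1T_2\Sigma L\\ O & O\end{bmatrix}U^*$ and $A_1\vee A_2=\phi^{-1}(T_1+T_2-T_1T_2)$. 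It then remains only to identify these abstractly defined matrices with the closed-form expressions in the statement.

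For part (a), the main step is to compute $B^\dag$ explicitly and verify that $A_1B^\dag A_2$ reproduces $\phi^{-1}(T_1T_2)$. Writing $B=UMU^*$ with $M=\begin{bmatrix}\Sigma K & \Sigma L\\ O & O\end{bmatrix}$ and using that $U$ is unitary gives $B^\dag=UM^\dag U^*$. I would factor $M=PQ$ with $P=\begin{bmatrix}\Sigma\\ O\end{bmatrix}\in\mathbb{C}^{n\times r}$ of full column rank and $Q=\begin{bmatrix}K & L\end{bmatrix}\in\mathbb{C}^{r\times n}$; the normalization $KK^*+LL^*=I_r$ says exactly that $QQ^*=I_r$, so $Q$ has full row rank. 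Since $P$ has full column rank and $Q$ has full row rank, the reverse order law applies, $M^\dag=Q^\dag P^\dag=Q^*P^\dag$, and a short computation yields $B^\dag=U\begin{bmatrix}K^*\Sigma^{-1} & O\\ L^*\Sigma^{-1} & O\end{bmatrix}U^*$.

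With $B^\dag$ in hand the computation collapses cleanly: multiplying the blocks of $A_1$ and $B^\dag$, the crucial top-left entry becomes $T_1\Sigma(KK^*+LL^*)\Sigma^{-1}=T_1$, again by the normalization, so $A_1B^\dag=U\begin{bmatrix}T_1 & O\\ O & O\end{bmatrix}U^*$. Right-multiplying by $A_2$ then gives $A_1B^\dag A_2=U\begin{bmatrix}T_1T_2\Sigma K & T_1T_2\Sigma L\\ O & O\end{bmatrix}U^*=\phi^{-1}(T_1T_2)=A_1\wedge A_2$, which proves (a).

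For part (b), once $A_1\wedge A_2=\phi^{-1}(T_1T_2)$ is established no further inverses are needed: expanding $A_1+A_2-A_1\wedge A_2$ block by block and collecting the common right factors $\Sigma K$ and $\Sigma L$ gives $U\begin{bmatrix}(T_1+T_2-T_1T_2)\Sigma K & (T_1+T_2-T_1T_2)\Sigma L\\ O & O\end{bmatrix}U^*$, which is precisely $\phi^{-1}(T_1+T_2-T_1T_2)=A_1\vee A_2$. The only genuinely delicate point is the determination of $B^\dag$ via the reverse order law; once that is correctly set up, both identities reduce to the repeated use of $KK^*+LL^*=I_r$ together with routine block multiplication.
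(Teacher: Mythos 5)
Your proof is correct and follows essentially the same route as the paper: transport the meet and join through the isomorphism $\phi$ using Lemma \ref{lem:supinfcuandoconmutan}, then verify the closed forms by block multiplication with $B^\dag$ and the identity $KK^*+LL^*=I_r$. The only difference is that you derive $B^\dag=U\begin{bmatrix}K^*\Sigma^{-1} & O\\ L^*\Sigma^{-1} & O\end{bmatrix}U^*$ via the full-rank factorization and the reverse order law, whereas the paper simply uses this (standard) expression without comment; your derivation is correct.
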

\begin{proof}
To see $(a)$, 
 \begin{equation*}
\begin{split}
 & A_1B^\dag A_2  = U \begin{bmatrix} T_1\Sigma K & T_1 \Sigma L \\ O & O \end{bmatrix} \begin{bmatrix} K^*\Sigma^{-1} & O \\ L^*\Sigma^{-1} & O \end{bmatrix}\begin{bmatrix} T_2\Sigma K & T_2 \Sigma L \\ O & O \end{bmatrix} U^*= U \begin{bmatrix} T_1T_2\Sigma K & T_1T_2 \Sigma L \\ O & O \end{bmatrix} U^*\\  & = A_1\wedge A_2.
\end{split}
\end{equation*}

Part $(b)$ follows directly from Lemma \ref{lem:supinfcuandoconmutan}. \qedhere

\end{proof}

\begin{remark}\label{remarklongitudcadena} Assume $ A \overset{\sharp}{\leq} B$ and  $A \neq B$. It is easy to see that $\operatorname{rk} ( A ) < \rk (B)$. From this, we deduce the following: 
\begin{enumerate}[(a)]
 \item If $\rk ( A ) =\rk ( B )$  then $A$ and $B$ are incomparable. 
\item The largest subchain in $[O,B]$ has at most $\rk ( B )+1$ elements.
\end{enumerate}
\end{remark}

\begin{proposition}\label{prop:complementadogrupo} If $[O, B]$ is a lattice then it is complemented of finite height.
\end{proposition}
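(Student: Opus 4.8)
The plan is to transport the entire problem to the poset $\tau$ through the isomorphism $\phi$ of Theorem \ref{isomorfismo}. Since $[O,B]\simeq\tau$ and an order isomorphism between lattices automatically preserves the least and greatest elements (here $O$ and $I_r$), the operations $\vee$ and $\wedge$, the lengths of chains, and hence both finite height and complementation, it suffices to establish the two claims for $\tau$ under the standing assumption that $\tau$ is a lattice. Working in $\tau$ rather than in $[O,B]$ keeps every computation at the level of $r\times r$ idempotents commuting with $\Sigma K$.

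For the finite height I would simply invoke Remark \ref{remarklongitudcadena}(b): the longest subchain of $[O,B]$ has at most $\rk(B)+1$ elements. Equivalently, since $\phi$ preserves rank and, by Remark \ref{remarklongitudcadena}(a), two distinct comparable elements have strictly ordered ranks, any strict chain $T_1<T_2<\cdots$ in $\tau$ has strictly increasing ranks lying in $\{0,1,\dots,r\}$ and so at most $r+1$ terms. Either way the height is bounded by $\rk(B)$, which is finite.

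For complementation the key idea is that, given $T\in\tau$, the obvious candidate $T'=I_r-T$ already does the job. First I would verify $T'\in\tau$: from $T^2=T$ we get $(I_r-T)^2=I_r-T$, and from $T\Sigma K=\Sigma K T$ we get $(I_r-T)\Sigma K=\Sigma K-\Sigma K T=\Sigma K(I_r-T)$. Next, $T$ and $T'$ commute, since $T(I_r-T)=T-T^2=O=(I_r-T)T$. Lemma \ref{lem:supinfcuandoconmutan} then applies directly and yields $T\wedge T'=TT'=O$ and $T\vee T'=T+T'-TT'=I_r$. As $O$ and $I_r$ are the bottom and top of $\tau$, the element $T'$ is a complement of $T$; since $T$ was arbitrary, $\tau$ is complemented, and therefore so is $[O,B]$.

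The main obstacle, such as it is, lies not in exhibiting the complement but in justifying that the expressions we compute are the genuine lattice meet and join. The formulas $T\wedge T'=TT'$ and $T\vee T'=T+T'-TT'$ are valid only because $T$ and $T'$ commute, and Lemma \ref{lem:supinfcuandoconmutan} guarantees precisely that these are the infimum and supremum in $\tau$. Because $[O,B]$ is assumed to be a lattice, infima and suprema are unique, so once the Lemma identifies them with $O$ and $I_r$ no separate checking of universal properties is required, and the proof closes.
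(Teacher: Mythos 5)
Your proof is correct and follows essentially the same route as the paper's: the complement of $T$ in $\tau$ is $I_r-T$, verified to lie in $\tau$ and shown to be a complement via Lemma \ref{lem:supinfcuandoconmutan}, with finite height coming from the rank bound of Remark \ref{remarklongitudcadena}. You are merely a bit more explicit than the paper about the transport through the isomorphism $\phi$ and about the chain-length argument.
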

\begin{proof}
 Let $T\in \tau$. Let us see that $I_r-T\in\tau$. Indeed, it is clear that $(I_r-T)^2=I_r-T$. Since $T\Sigma K = \Sigma K T$, then $(I_r-T)\Sigma K = \Sigma K (I_r-T)$. So $I_r-T\in\tau$. Since  $T(I_r-T)=(I_r-T)T=O$ then $T\wedge (I_r-T)=O$ and $T\vee (I_r-T)=I_r$ by Lemma \ref{lem:supinfcuandoconmutan}. 
\end{proof}

Let us observe that the complement of an element is not necessarily unique. However, if the lattice is distributive then the complements are unique and the following result is satisfied. 

\begin{corollary}\label{cor:distributiveisboole} If $[O, B]$ is a distributive lattice then $[O, B]$ is a Boolean algebra. 
\end{corollary}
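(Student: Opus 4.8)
The plan is to show that a distributive complemented lattice with finite height---properties that $[O,B]$ inherits from Proposition~\ref{prop:complementadogrupo} once we assume it is a distributive lattice---is automatically a Boolean algebra. By the very definition recalled in Section~\ref{sec:preliminares}, a Boolean algebra \emph{is} a complemented distributive lattice, so almost all the work has already been done. I would first note that since $[O,B]$ is assumed to be a distributive lattice, it has a least element $O$ and a greatest element $B$ (these exist by the remarks preceding Theorem~\ref{isomorfismo}, where $O$ is the least and $B$ the greatest element of the poset). Hence the notions of complement, as defined in Section~\ref{sec:preliminares}, make sense.

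The main step is to invoke Proposition~\ref{prop:complementadogrupo}: under the hypothesis that $[O,B]$ is a lattice, it is complemented. Since we are now assuming the stronger hypothesis that $[O,B]$ is a \emph{distributive} lattice, in particular it is a lattice, so Proposition~\ref{prop:complementadogrupo} applies and $[O,B]$ is complemented. Combining this with the distributivity assumption, $[O,B]$ is a complemented distributive lattice, which is precisely the definition of a Boolean algebra. This yields the conclusion directly.

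The only point requiring a word of care---and the closest thing to an obstacle---is the uniqueness of complements, which is alluded to in the sentence preceding the corollary. In a distributive lattice, if $a$ has two complements $a'$ and $a''$, then the standard argument $a'=a'\wedge 1=a'\wedge(a\vee a'')=(a'\wedge a)\vee(a'\wedge a'')=0\vee(a'\wedge a'')=a'\wedge a''$, and symmetrically $a''=a'\wedge a''$, forces $a'=a''$. This is worth recording because it confirms that the complement guaranteed by Proposition~\ref{prop:complementadogrupo} is genuinely a well-defined operation, so the structure is a Boolean algebra in the strict sense (with a unary complementation operation) rather than merely a lattice in which complements happen to exist. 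I would present this uniqueness computation briefly, then conclude that $[O,B]$ is a Boolean algebra.

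In summary, the proof is short: distributivity is assumed; complementedness and finite height come for free from Proposition~\ref{prop:complementadogrupo}; the combination is the definition of a Boolean algebra; and the uniqueness of complements in a distributive lattice is the one computation worth spelling out. I expect no genuine difficulty here, as the corollary is essentially an unpacking of the definitions together with the previous proposition.
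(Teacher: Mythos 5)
Your proposal is correct and follows exactly the route the paper intends: Proposition~\ref{prop:complementadogrupo} supplies complementedness, which together with the assumed distributivity gives a complemented distributive lattice, i.e.\ a Boolean algebra by the definition in Section~\ref{sec:preliminares}. The uniqueness-of-complements computation you add is the standard one the paper alludes to in the sentence preceding the corollary, so there is no divergence from the paper's (implicit) argument.
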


In the last part of the section, we will prove that any interval is a copy of a certain down-set, more specifically, the interval $[A_1, A_2]=\{A\in\mathbb{C}_1^{n}: A_1\stackrel{\sharp}\leq A\stackrel{\sharp}\leq A_2 \}$, for any $A_1\stackrel{\sharp}\leq A_2\in \mathbb{C}_1^{n}$ is isomorphic to $[O, A_2-A_1]$. 

\begin{remark}\label{remarkparalema} Let $P, T, Q\in \Crr $ be projectors such that $P\leq T$ and $TQ=QT=O$. It is easy to see that $PQ=QP=O$. 
\end{remark}

\begin{lemma}\label{Lemma intervalos}

Let $A, A_1, A_2, B\in \mathbb{C}_1^{n}$. If $A_1\stackrel{\sharp}\leq A_2\stackrel{\sharp}\leq B$ then $[A_1, A_2]$ and $[O, A_2-A_1]$ are isomorphic. In particular, if $A\stackrel{\sharp}\leq B$ then $[O, B-A]$ and $[A, B]$ are isomorphic.
\end{lemma}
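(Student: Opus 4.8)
The plan is to transport everything to the projector poset $\tau$ by means of the isomorphism $\phi$ of Theorem~\ref{isomorfismo}, and then to reduce the statement to a purely combinatorial fact about an interval of projectors. Set $T_i=\phi(A_i)$ for $i\in\{1,2\}$. Since $A_1\stackrel{\sharp}\leq A_2$, Theorem~\ref{isomorfismo} gives $T_1\leq T_2$ in $\tau$, that is $T_1T_2=T_2T_1=T_1$. First I would check that $A_2-A_1$ actually lies in $[O,B]$ and that $\phi(A_2-A_1)=T_2-T_1$: subtracting the block forms (\ref{hartwigAgrupo}) of $A_2$ and $A_1$ produces a matrix of the same shape with the block $T_2-T_1$ in the role of the projector, and a short computation using $T_1\leq T_2$ shows $(T_2-T_1)^2=T_2-T_1$ and $(T_2-T_1)\Sigma K=\Sigma K(T_2-T_1)$, so $T_2-T_1\in\tau$. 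Theorem~\ref{teorema:predecesor}, together with the uniqueness of the associated projector, then identifies $A_2-A_1$ as the element of $[O,B]$ whose $\phi$-image is $T_2-T_1$.

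Because $\phi$ is an order isomorphism of $[O,B]$ onto $\tau$, it carries intervals to intervals. Using transitivity of $\stackrel{\sharp}\leq$ and the fact that $O\stackrel{\sharp}\leq X$ for every $X\in\mathbb{C}_1^n$, both $[A_1,A_2]$ and $[O,A_2-A_1]$ are genuine sub-intervals of the down-set $[O,B]$, and hence $\phi$ restricts to isomorphisms $[A_1,A_2]\simeq[T_1,T_2]$ and $[O,A_2-A_1]\simeq[O,T_2-T_1]$, where the right-hand sides are intervals inside $\tau$. Consequently it suffices to prove the projector statement $[T_1,T_2]\simeq[O,T_2-T_1]$, after which composing the three isomorphisms yields $[A_1,A_2]\simeq[O,A_2-A_1]$.

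For the projector statement I would define $g\colon[T_1,T_2]\to[O,T_2-T_1]$ by $g(T)=T-T_1$ and $h\colon[O,T_2-T_1]\to[T_1,T_2]$ by $h(S)=S+T_1$. That $g$ is well defined amounts to checking, as above, that $T-T_1\in\tau$ and $T-T_1\leq T_2-T_1$ whenever $T_1\leq T\leq T_2$. The delicate point is that $h$ lands back in $[T_1,T_2]$: the key is the orthogonality $ST_1=T_1S=O$ for $S\in[O,T_2-T_1]$, which I would obtain from Remark~\ref{remarkparalema} applied to $S\leq T_2-T_1$ and the identities $(T_2-T_1)T_1=T_1(T_2-T_1)=O$; once this is in hand, $(S+T_1)^2=S+T_1$ and $T_1\leq S+T_1\leq T_2$ follow routinely, and $g,h$ are visibly mutually inverse. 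For order-preservation in both directions simultaneously, I would compute, for $T,T'\in[T_1,T_2]$, that $(T-T_1)(T'-T_1)=TT'-T_1$ and $(T'-T_1)(T-T_1)=T'T-T_1$, whence $g(T)\leq g(T')$ holds if and only if $TT'=T'T=T$, i.e.\ if and only if $T\leq T'$; this single equivalence shows that both $g$ and $g^{-1}=h$ are order-preserving.

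The main obstacle is precisely the well-definedness of the inverse map $h$, since adding $T_1$ back to an arbitrary $S\in[O,T_2-T_1]$ need not produce an idempotent commuting appropriately unless one first secures the orthogonality $ST_1=T_1S=O$; this is exactly where Remark~\ref{remarkparalema} does the essential work, and every other step is an elementary projector computation. Finally, the displayed special case is obtained by taking $A_1=A$ and $A_2=B$ (noting $B\stackrel{\sharp}\leq B$), which gives $[A,B]\simeq[O,B-A]$.
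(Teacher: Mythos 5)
Your proposal is correct and follows essentially the same route as the paper: reduce to the projector poset via $\phi$, use Remark \ref{remarkparalema} to get the orthogonality $PT_1=T_1P=O$, and show that $P\mapsto P+T_1$ is an isomorphism $[O,T_2-T_1]\to[T_1,T_2]$ (you simply present the inverse map $T\mapsto T-T_1$ as the primary one). Your extra step of verifying that $\phi(A_2-A_1)=T_2-T_1$ is a welcome detail the paper leaves implicit, but it does not change the argument.
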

\begin{proof} 

Assume that $ A_1\stackrel{\sharp}\leq A_2\stackrel{\sharp}\leq B$ and set $T_1, T_2$ such that $\phi(A_i)=T_i$, for each $i\in \{1,2\}$. 
If $P$ satisfies $P^2=P\leq T_2-T_1$, by $(T_2-T_1)T_1=O=T_1(T_2-T_1)$ and Remark \ref{remarkparalema}, we have that $PT_1=T_1P=O$. It is easy to see that $P+T_1$ is idempotent and $T_1\leq P+T_1$. Now, again from $P\leq T_2-T_1$, we have that $P=P(T_2-T_1)=PT_2$ and $P=(T_2-T_1)P=T_2P$. Then, $(P+T_1)T_2=PT_2+T_1T_2=P+T_1=T_2P +T_2P_1= T_2(P+T_1)$, i.e., $P+T_1\leq T_2$. Thus, the map $\varphi\colon [O, T_2-T_1]\to [T_1, T_2]$ given by $\varphi(P)=P+T_1$ is well-defined. Let us prove that $\varphi $ is an isomorphism. Indeed, let $Q\in [T_1, T_2]$ and $P=Q-T_1$. Then $P(T_2-T_1)=(Q-T_1)(T_2-T_1)= QT_2-T_1T_2-QT_1+T_1^2= Q-T_1-T_1+T_1=Q-T_1=P$. Similarly, $(T_2-T_1)P=P$. So, $P\in [O, T_2-T_1]$ and $\varphi(P)=Q$. Thus, $\varphi $ is surjective. Let $P_1, P_2\in [O, T_2-T_1]$. 
Since $T_1P_2=P_1T_1=T_1P_1=P_2T_1=O$ then,
$\varphi(P_1)\leq \varphi(P_2)$ if and only if $P_1P_2=P_2P_1=P_1$ if and only if $P_1\leq P_2$. Then, $\varphi$ is an isomorphism.

The second statement follows by setting $A_1=A$ and $A_2=B$. \qedhere
\end{proof}

\section{Down-set of $B$ via projectors that commutes with the Jordan canonical form of $\Sigma K$}\label{sec:deltasharp}

Let $B\in \mathbb{C}_1^{n}$ be written as in (\ref{hartwigB}) and the Jordan decomposition of the nonsingular matrix $\Sigma K$, that is, $\Sigma K = PJP^{-1}$ where the Jordan canonical form of $\Sigma K$ is $ J = \textrm{diag} (J(\lambda_1), J(\lambda_2), \cdots, J(\lambda_s))$ as in (\ref{Jordan Form}) and $s$ is the number of distinct eigenvalues of $\Sigma K$. 

In the previous section we considered the poset $\tau= \{ T \in \Crr : T^2 = T\ \textrm{and } \ T\Sigma K = \Sigma K T\}$ endowed with the partial order given by $T_1\leq T_2 $ if and only if $T_1=T_1T_2=T_2T_1$, and we proved that this poset is isomorphic to the down-set of $B$. Now, we introduce a new poset \[\delta=\{T\in\Crr : \, T^2=T \text{ and } TJ=JT\},\]  also ordered by $\leq  $. In this section we establish an isomorphism between the poset $\tau$ and the poset $\delta$. Clearly, we will have an isomorphism between the down-set of $B$ and the poset $\delta$ given by the composition of both isomorphisms. Consequently, we can study the down-set of $B$ through the poset $\delta$. This brings significant advantages, not only because elements in $\delta$ are projectors (with sizes smaller than those of the corresponding matrices which are in the down-set of $B$) but also because, as these projectors commute with the Jordan canonical form of $\Sigma K$, we can describe them with more precision. So, we get a deeper description of the matrices that are below a given matrix $B$. From this, we obtain conditions under which the down-set of $B$ is a lattice and we completely describe this lattice. We also analize when it is distributive and we give conditions for the down-set of $B$ to be a Boolean algebra. 

In the following result we will prove that $\tau$ is isomorphic to $\delta$. To do this, we define the map $\psi: \delta \to \tau$ by $\psi(T)= PTP^{-1}$, where $P$ is the nonsingular matrix obtained in the Jordan decomposition of $\Sigma K$.

\begin{proposition}\label{isomorfismo-con-delta} The posets $\tau$ and $\delta$, ordered by $\leq $ in both cases, are isomorphic.
\end{proposition}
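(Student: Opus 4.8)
The plan is to show that the map $\psi\colon\delta\to\tau$ defined by $\psi(T)=PTP^{-1}$ is a well-defined bijection that is order-preserving in both directions, since by Theorem \ref{isomorfismo} and transitivity of isomorphism this will give the desired chain $[O,B]\simeq\tau\simeq\delta$. First I would verify that $\psi$ actually lands in $\tau$: if $T\in\delta$, then $(PTP^{-1})^2=PT^2P^{-1}=PTP^{-1}$ so $\psi(T)$ is idempotent, and from $TJ=JT$ together with $\Sigma K=PJP^{-1}$ one computes $(PTP^{-1})(\Sigma K)=PTJP^{-1}=PJTP^{-1}=(\Sigma K)(PTP^{-1})$, so $\psi(T)$ commutes with $\Sigma K$ and hence $\psi(T)\in\tau$. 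This confirms well-definedness.

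Next I would exhibit the inverse. The natural candidate is $\psi^{-1}(S)=P^{-1}SP$, and I would check symmetrically that it sends $\tau$ into $\delta$: idempotency is immediate, and $S(\Sigma K)=(\Sigma K)S$ rewrites, after conjugating by $P^{-1}$, as $(P^{-1}SP)J=J(P^{-1}SP)$. Since $\psi$ and this map are mutually inverse (conjugation by $P$ followed by conjugation by $P^{-1}$ is the identity), $\psi$ is a bijection.

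Finally I would establish that $\psi$ preserves and reflects the order. Writing $T_1\leq T_2$ in $\delta$ as the pair of equations $T_1=T_1T_2=T_2T_1$, conjugation by $P$ gives $PT_1P^{-1}=(PT_1P^{-1})(PT_2P^{-1})=(PT_2P^{-1})(PT_1P^{-1})$, which is exactly $\psi(T_1)\leq\psi(T_2)$ in $\tau$; the converse implication follows by the identical computation applied to $\psi^{-1}$. Therefore both $\psi$ and $\psi^{-1}$ are order-preserving, so $\psi$ is an isomorphism and $\tau\simeq\delta$.

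I do not expect any genuine obstacle here: the entire argument is the routine observation that conjugation by a fixed nonsingular $P$ is a bijection that respects products and hence respects idempotency, the commutation condition (transported from $J$ to $\Sigma K=PJP^{-1}$), and the multiplicative characterization of the partial order. The only point requiring a little care is bookkeeping the direction of the commutation relation under conjugation—making sure that $TJ=JT$ corresponds precisely to $\psi(T)\,\Sigma K=\Sigma K\,\psi(T)$ rather than to some transposed variant—but this is settled by the single substitution $J=P^{-1}(\Sigma K)P$.
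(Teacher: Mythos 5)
Your proposal is correct and follows essentially the same route as the paper: the same conjugation map $\psi(T)=PTP^{-1}$, the same verification that idempotency and the commutation condition transport between $J$ and $\Sigma K=PJP^{-1}$, the same explicit inverse $S\mapsto P^{-1}SP$, and the same conjugation argument showing the multiplicative order relation is preserved in both directions. No gaps.
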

\begin{proof}

     Is clear that $[\psi (T)]^2=\psi (T)$ because $T^2=T$ and
    $\psi (T)\Sigma K =  \Sigma K\psi (T)$ holds from $TJ=JT$.
    Then $\psi (T)\in \tau$. Hence, $\psi$ is well defined.
    If $T\in\tau$, we have that $P^{-1}T P\in\delta$ and $\psi(P^{-1}T P)=T$, that is $\psi$ is onto.
    Finally, for all $T_1, T_2\in \delta$ we have that $T_1 \leq  T_2$ iff $ 
    T_1= T_1T_2= T_2T_1$ iff $PT_1P^{-1}=P T_1P^{-1}PT_2P^{-1}=P T_2P^{-1}PT_1P^{-1}$ iff  $
    \psi (T_1)=\psi (T_1)\psi (T_2)=\psi (T_2)\psi (T_1)$ iff $ \psi (T_1) \leq
    \psi (T_2) $. \qedhere
\end{proof}

It is easy to see that the rank function is preserved under $\psi$.

Let $M$ be a given square matrix. We consider the poset $\delta_M$ of all the projectors with the same size of $M$ and commuting with $M$, that is,  
\[\delta_M=\{T\colon T^2=T \text{ and } TM=MT\}\]
ordered by $\leq$. Obviously, $\delta=\delta_{J}$.

We start by studying the poset $\delta_J$ in the special case in which $J$ is a Jordan matrix with only one eigenvalue having geometric multiplicity equal to 2. In the following lemma, we determine the possible ranks of a projector in this $\delta_J$. This case is important because it will be our starting point for describing the poset $\delta$, and thus the down-set of $B$, in the general case. 

\begin{lemma}\label{lemmarangos} Let $J=J(\lambda)=\begin{bmatrix}
J_1(\lambda) & O  \\
O & J_2(\lambda)
\end{bmatrix}$ be a block matrix where $J_1(\lambda)$ and $J_2(\lambda)$ are the Jordan blocks associated to the same eigenvalue $\lambda$, $J_1(\lambda)\in \mathbb{C}^{q \times q}$, $J_2(\lambda)\in \mathbb{C}^{p\times p}$, $q\geq p$ and $q+p=r$. Let $T\in \delta_{J}$.
\begin{enumerate}[(a)]
 \item If $q>p$ then $\operatorname{rk} (T)\in \{0, p, q, r\}$. 
 \item If $q=p$ then $\operatorname{rk} (T)\in \{0, r/2 , r\}$. 
\end{enumerate}

\end{lemma}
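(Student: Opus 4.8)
The plan is to combine the explicit description of matrices commuting with a Jordan matrix given in (\ref{TJ=JT})--(\ref{cullen}) with the elementary fact that a complex idempotent matrix has rank equal to its trace. Since $J=J(\lambda)$ has a single eigenvalue and exactly two Jordan blocks, every $T\in\delta_J$ has the block form
\[
T=\begin{bmatrix} R_{11} & R_{12}\\ R_{21} & R_{22}\end{bmatrix},
\]
where, by (\ref{cullen}), $R_{11}\in\mathbb{C}^{q\times q}$ and $R_{22}\in\mathbb{C}^{p\times p}$ are RUTMs, while $R_{12}$ and $R_{21}$ are the zero-padded RUTMs prescribed there. Writing $a_{11}$ and $a_{22}$ for the (constant) diagonal entries of $R_{11}$ and $R_{22}$, the off-diagonal blocks never meet the main diagonal, so $\operatorname{rk}(T)=\operatorname{tr}(T)=\operatorname{tr}(R_{11})+\operatorname{tr}(R_{22})=q\,a_{11}+p\,a_{22}$. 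Everything then reduces to deciding which values $a_{11}$ and $a_{22}$ can take.

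For part (a), where $q>p$, I would read off the two diagonal blocks of the identity $T^2=T$, namely $R_{11}^2+R_{12}R_{21}=R_{11}$ and $R_{22}^2+R_{21}R_{12}=R_{22}$. Since RUTMs form a commutative algebra (Remark \ref{idempotente RUTM}), the matrices $R_{11}-R_{11}^2=R_{12}R_{21}$ and $R_{22}-R_{22}^2=R_{21}R_{12}$ are RUTMs, hence each has all of its diagonal entries equal to its constant term. The key observation is that the padding forced by $q>p$ makes the $(1,1)$ entry of both products vanish: the first $q-p\ge 1$ columns of $R_{21}$ are zero, which kills the $(1,1)$ entry of $R_{12}R_{21}$, and an analogous support/upper-triangularity argument kills the $(1,1)$ entry of $R_{21}R_{12}$. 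Thus the constant terms $a_{11}-a_{11}^2$ and $a_{22}-a_{22}^2$ both vanish, so $a_{11},a_{22}\in\{0,1\}$ and $\operatorname{rk}(T)=q\,a_{11}+p\,a_{22}\in\{0,p,q,r\}$.

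For part (b), where $q=p=r/2$, all four blocks $R_{ik}$ are $p\times p$ RUTMs, so here I would instead use the map $\rho$ sending a RUTM to its diagonal entry. Since the algebra of $p\times p$ RUTMs is isomorphic to $\mathbb{C}[t]/(t^p)$ by Remark \ref{idempotente RUTM}, $\rho$ is a ring homomorphism, and applying it blockwise yields a ring homomorphism $T\mapsto \bar T=[a_{ik}]\in\mathbb{C}^{2\times 2}$. From $T^2=T$ it follows that $\bar T$ is idempotent, whence $a_{11}+a_{22}=\operatorname{tr}(\bar T)=\operatorname{rk}(\bar T)\in\{0,1,2\}$ and therefore $\operatorname{rk}(T)=p(a_{11}+a_{22})\in\{0,r/2,r\}$.

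The main obstacle is the off-diagonal analysis in part (a): one must check carefully that the zero-padding dictated by $q>p$ in (\ref{cullen}) forces the constant terms of $R_{12}R_{21}$ and $R_{21}R_{12}$ to be zero, so that $a_{11}$ and $a_{22}$ are genuine idempotent scalars rather than merely satisfying a coupled relation such as $a_{11}-a_{11}^2=a_{12}a_{21}$, which is what occurs in the equal-size case. This is exactly why the two cases demand different arguments, and why the admissible ranks in (a) contain both $p$ and $q$ separately, while in (b) these two middle values collapse into the single value $r/2$.
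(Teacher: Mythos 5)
Your argument is correct, and it splits naturally into a part that coincides with the paper and a part that does not. For (a) you follow essentially the paper's route: read off the diagonal blocks of $T^2=T$, use the zero-padding in (\ref{cullen}) forced by $q>p$ to see that $R_{12}R_{21}$ and $R_{21}R_{12}$ contribute nothing on the main diagonal, deduce that the diagonal constants of $X_{11}$ and $X_{22}$ lie in $\{0,1\}$, and finish with $\operatorname{rk}(T)=\operatorname{tr}(T)$; the only cosmetic difference is that the paper verifies all diagonal entries of the two products vanish by direct computation, while you reduce to the $(1,1)$ entry after noting that these products are themselves RUTMs. In (b) you genuinely diverge. The paper extracts the scalar relations $a^2+cd=a$ and $b^2+cd=b$ from the diagonal constants of the four blocks, splits into the cases $a\neq b$ (forcing $a+b=1$) and $a=b$, and in the latter case needs the auxiliary estimate $\operatorname{rk}(T)\geq r/2$ and $\operatorname{rk}(I_r-T)\geq r/2$ coming from the invertibility of the diagonal blocks when $a\notin\{0,1\}$. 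You instead view the commutant as $M_2$ over the local ring of $p\times p$ RUTMs $\cong\mathbb{C}[t]/(t^p)$ and push $T$ through the blockwise constant-term homomorphism to an idempotent $\bar T\in\mathbb{C}^{2\times 2}$, so that $\operatorname{rk}(T)=p\,\operatorname{tr}(\bar T)\in\{0,p,2p\}$. Your version eliminates the case analysis and the extra rank bound, and it scales verbatim to any number $t$ of equal-size Jordan blocks (an idempotent in $M_t(\mathbb{C})$ has trace in $\{0,1,\dots,t\}$), whereas the paper's computation is tied to $t=2$; the paper's version is more elementary and self-contained, staying entirely within explicit matrix identities. Both rest on the same two structural inputs, the description (\ref{cullen}) of the commutant and the identity rank $=$ trace for complex idempotents, so either proof is acceptable here.
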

\begin{proof} 

Since $TJ=JT$, particularizing in (\ref{TJ=JT}) and (\ref{Tsss}), we get $T=D_1=\begin{bmatrix}
R_{11} & R_{12} \\
R_{21} & R_{22}
\end{bmatrix}$, where $R_{11}\in \mathbb{C}^{q\times q}$, $R_{22}\in \mathbb{C}^{p\times p}$, $R_{12}\in \mathbb{C}^{q\times p}$ and $R_{21}\in \mathbb{C}^{p\times q}$ . The equality $T^2=T$ implies  
\begin{equation}\label{eqTcuadrado}
 \begin{bmatrix}
R_{11}^2+ R_{12}R_{21} & R_{11}R_{12}+R_{12}R_{22} \\
R_{21}R_{11}+R_{22}R_{21} & R_{22}^2+ R_{21}R_{12}\end{bmatrix}=\begin{bmatrix}
R_{11} & R_{12} \\
R_{21} & R_{22}
\end{bmatrix}.
\end{equation}

\begin{enumerate}[(a)]
 \item If $q>p$ then by (\ref{cullen}) $T= \left[\begin{array}{c|c}
X_{11} & \begin{matrix} X_{12} \\ O
          \end{matrix} \\ \hline
 \begin{matrix}
O & X_{21}  \end{matrix}
 & X_{22}
\end{array}\right] $ where $X_{11}\in \mathbb{C}^{q\times q}$, $X_{12}, X_{21}, X_{22}\in \mathbb{C}^{p\times p}$ are RUTM's. Firstly, we observe that $ R_{12}R_{21}=\begin{bmatrix} X_{12} \\ O
          \end{bmatrix}  \begin{bmatrix}
O & X_{21}  \end{bmatrix}= \begin{bmatrix} O & X_{12}X_{21} \\
O & O\end{bmatrix}
$ and then the main diagonal has all the elements equal 0. Secondly, we have that $ R_{21}R_{12}=\begin{bmatrix}
O & X_{21}  \end{bmatrix}\begin{bmatrix} X_{12} \\ O
          \end{bmatrix} $ and easy computations yields to the elements in the main diagonal of $ R_{21}R_{12}$ are again all equal $0$. 
          
Since $R_{11}= X_{11}$ is a RUTM, the elements in the main diagonal of $R_{11}$ are all equal to certain $a$. From the above and (\ref{eqTcuadrado}) we have that $a=a^2$. So, $a=0$ or $a=1$. Similarly, if we call $b$ the elements in the main diagonal of $R_{22}= X_{22}$ we obtain that $b=0$ or $b=1$. 

Now, we argue by computing ranks. Let us recall that if $T$ is a projector then $\operatorname{rk} (T)$ is equal to the trace of $T$ which is denoted by $\operatorname{tr}(T)$. Thus, if $a=b=0$ then $T=O$ and if $a=b=1$ then $T=I_{r}$. If $a=1$ and $b=0$ then $\operatorname{rk} (T)= q$ and if $a=0$ and $b=1$ then $\operatorname{rk} (T)= p$. 

\item If $q=p$, then $r=2q$ and then by (\ref{cullen}) $T=\begin{bmatrix}
X_{11} & X_{12} \\
X_{21} & X_{22}
\end{bmatrix}$ where $X_{ij}\in\mathbb{C}^{q\times q}$ is a RUTM, for $i,j\in \{1,2\}$. Let $a, b, c, d$ be the elements of the main diagonal of $X_{11}$, $X_{22}$, $X_{12}$, and $X_{21}$, respectively. Using (\ref{eqTcuadrado}) we have $a^2+cd=a$ and $b^2+cd=b$. Thus, $(a-b)(a+b)=a-b$. If $a\neq b$, then $a=1-b$ and $\operatorname{rk}(T)= \operatorname{tr} (T)= q=r/2$. If $a=b$ then $\operatorname{rk}(T)= \operatorname{tr} (T)= ra $. If $a=1$ then $T=I_{r}$ and if $a=0$ then $T=O$. If $a\neq 0$ and $a\neq 1$ then $\operatorname{rk}(X_{11})=r/2$ and this implies that $\operatorname{rk}(T)\geq r/2$. Similarly, $\operatorname{rk}(I_r-T)\geq r/2$. Since $\operatorname{rk}(T)+ \operatorname{rk}(I_r-T)=r$ then $\operatorname{rk}(T) =r/2$. \qedhere
 \end{enumerate}
\end{proof}

\begin{remark}\label{remark:diagonal} From the proof of Lemma \ref{lemmarangos} we also obtain that if $q>p$ and $\operatorname{rk}(T)=q$ then the elements in the main diagonal of $X_{11}$ are all equal to 1 and the elements in the main diagonal of $X_{22} $ are equal to $0$. In the same way, if $\operatorname{rk}(T)=p$ then the elements in the main diagonal of $X_{11}$ are all equal to $0$ and the elements in the main diagonal of $X_{22} $ are equal to $1$.
\end{remark}

In the next theorem we describe the poset $\delta_J$ in which $J$ is a Jordan matrix as in (\ref{Jordan Form}) having only one eigenvalue and no more than two Jordan blocks associated to it. 

\begin{theorem} Let $J=J(\lambda)\in \mathbb{C}^{r\times r}$ be a Jordan matrix having only one eigenvalue $\lambda$. 
\begin{enumerate}[(a)]
 \item If the geometric multiplicity of $\lambda$ is 1 then $\delta_J=\{O, I_r\}$. 
 \item  If the geometric multiplicity of $\lambda$ is 2 then the projectors in $\delta_J$ distinct from $O$ and $I_r$ are pairwise incomparable. Thus, $\delta_J$ is a lattice with first element, greatest element, and between both of them infinite incomparable elements as shown in the Hasse diagram given in Figure \ref{drank2}. 
\begin{figure}[ht]
\centering
\includegraphics[scale=0.6]{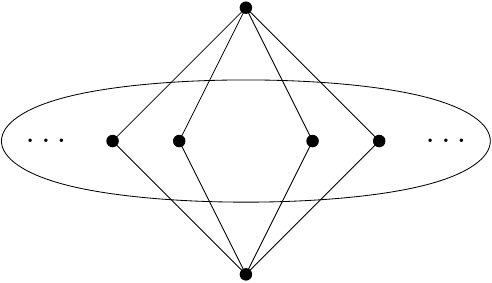}
\caption{$\delta_J$ when $J$ has exactly only one eigenvalue of geometric multiplicity 2.} \label{drank2}
\end{figure}

\end{enumerate}
\end{theorem}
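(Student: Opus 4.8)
The plan is to treat the two parts separately, in each case reducing everything to the combinatorics of ranks supplied by Lemma~\ref{lemmarangos} together with the finer diagonal information of Remark~\ref{remark:diagonal}.

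For part (a), I would first write $J=J(\lambda)=\lambda I_r+N$ with $N$ as in (\ref{nilpotente_cero}), so that $TJ=JT$ is equivalent to $TN=NT$. When the geometric multiplicity of $\lambda$ is $1$ there is a single Jordan block, so specializing (\ref{TJ=JT})--(\ref{cullen}) to $s=1$, $t_1=1$ forces $T$ to be a RUTM. By Remark~\ref{idempotente RUTM} the only idempotent RUTM's are $O$ and $I_r$, whence $\delta_J=\{O,I_r\}$.

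For part (b) the key preliminary observation is that if $T_1\le T_2$ are idempotents then $T_2-T_1$ is again idempotent and, since rank equals trace for projectors, $\rk(T_2-T_1)=\rk(T_2)-\rk(T_1)$; consequently two comparable \emph{distinct} projectors always have different ranks. I would then split according to Lemma~\ref{lemmarangos}. If $q=p$, every projector other than $O$ and $I_r$ has rank $r/2$ by Lemma~\ref{lemmarangos}(b), so no two distinct nontrivial projectors can be comparable. If $q>p$, the nontrivial ranks are $p$ and $q$ with $p<q$, so a comparable nontrivial pair would have to be $T_1\le T_2$ with $\rk(T_1)=p$ and $\rk(T_2)=q$. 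Since $T_2-T_1\in\delta_J$, its rank $q-p$ must lie in $\{0,p,q,r\}$ by Lemma~\ref{lemmarangos}(a); as $0<q-p<q$, this is only possible when $q-p=p$, that is $q=2p$ and $r=3p$.

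The main obstacle is exactly this degenerate case $q=2p$, where the rank bound alone produces no contradiction. To finish it I would invoke Remark~\ref{remark:diagonal}: the three matrices $T_1$, $T_2-T_1$ and $I_r-T_2$ are mutually orthogonal elements of $\delta_J$, each of rank $p$, and they add up to $I_r$. By Remark~\ref{remark:diagonal} each of them has its bottom-right $p\times p$ block with every diagonal entry equal to $1$; adding the three bottom-right blocks must give the bottom-right block $I_p$ of $I_r$, yet the diagonal entries of the sum equal $1+1+1=3\ne 1$, a contradiction. This rules out every comparable nontrivial pair, so all projectors different from $O$ and $I_r$ are pairwise incomparable. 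Finally, a poset with least element $O$, greatest element $I_r$ and an antichain in between is automatically a lattice (the meet of two distinct middle elements is $O$ and their join is $I_r$), and exhibiting a one-parameter family of invariant projectors of the appropriate rank shows this antichain is infinite, yielding the Hasse diagram of Figure~\ref{drank2}.
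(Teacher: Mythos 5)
Your proof is correct, and part (a) together with the $q=p$ case of part (b) coincide with the paper's argument. Where you genuinely diverge is the case $q>p$: the paper takes the two candidate projectors $T_1$ (rank $q$) and $T_2$ (rank $p$), uses Remark~\ref{remark:diagonal} to pin down the diagonals of their diagonal blocks, and then checks directly that every diagonal entry of $T_1T_2$ vanishes, so $T_1T_2$ can equal neither $T_1$ nor $T_2$ and the pair is incomparable. You instead pass to the difference: from $T_1\leq T_2$ you form the idempotent $T_2-T_1\in\delta_J$ and apply the rank classification of Lemma~\ref{lemmarangos} to it, which kills every configuration except $q=2p$, and you then dispose of that residual case by summing the bottom-right diagonal entries of the three mutually orthogonal rank-$p$ projectors $T_1$, $T_2-T_1$, $I_r-T_2$ against those of $I_r$. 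Both routes lean on the same two ingredients (Lemma~\ref{lemmarangos} and Remark~\ref{remark:diagonal}); the paper's product computation handles all $q>p$ uniformly in one stroke, while yours extracts more mileage from pure rank/trace counting and isolates exactly which numerical coincidence ($q=2p$) needs the finer diagonal information. One cosmetic remark: like the paper, you do not actually display the infinite antichain, only assert that a one-parameter family exists; a concrete witness such as $\bigl[\begin{smallmatrix} I_q & R \\ O & O\end{smallmatrix}\bigr]$ with $R=\bigl[\begin{smallmatrix} cI_p \\ O\end{smallmatrix}\bigr]$, $c\in\mathbb{C}$, would close that small gap in both write-ups.
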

\begin{proof} 
\begin{enumerate}[(a)]
\renewcommand{\arraystretch}{1.5}

 \item Let $T\in \delta_J$. Since $J$ has only one eigenvalue with geometric multiplicity 1 and $TJ=JT$, then $T$ is a RUTM and $T= a_{1} I_{r} + a_{2} N + a_{3} N^2 +\dots+ a_{r} N^{r-1}$ for some $a_{1},a_{2},\dots,a_{r} \in {\mathbb C}$. Moreover, $T=T^2$. Then by Remark \ref{idempotente RUTM}, $a_{1}\in\{0, 1\}$ and $a_{i}=0$ for $2\leq i \leq r$. Thus $\delta_J=\{O, I_r\}$. 
 
\item Let $J$ have exactly two Jordan blocks of sizes $q$ and $p$. Suppose first that $q>p$ and we take $T_1, T_2\in \delta_J\setminus\{O, I_r\}$. If $\operatorname{rk} (T_1)=\operatorname{rk} (T_2)$ then $T_1$ and $T_2$ are incomparable by part (a) in Remark \ref{remarklongitudcadena}. If $\operatorname{rk} (T_1)\neq \operatorname{rk} (T_2)$ then by Lemma \ref{lemmarangos} either $\rk (T_1)=q$ and $ \rk (T_2)=p$, or, $\rk (T_1)=p$ and $ \rk (T_2)=q$. Without lost of generality, we can suppose that $\rk (T_1)=q$ and $ \rk (T_2)=p$. We know that 
 $T_1= \left[\begin{array}{c|c}
X_{11} & \begin{matrix} X_{12} \\ O
          \end{matrix} \\ \hline
 \begin{matrix}
O & X_{21}  \end{matrix}
 & X_{22}
\end{array}\right] $ where $X_{11}\in \mathbb{C}^{q\times q}$, $X_{12}, X_{21}, X_{22}\in \mathbb{C}^{p\times p}$ are RUTM's. Moreover, by Remark \ref{remark:diagonal}, the elements in the main diagonal of $X_{11}$ are all equal to 1 and the elements in the main diagonal of $X_{22}$ are equal to 0. In the same way, $T_2= \left[\begin{array}{c|c}
Y_{11} & \begin{matrix} Y_{12} \\ O
          \end{matrix} \\ \hline
 \begin{matrix}
O & Y_{21}  \end{matrix}
 & Y_{22}
\end{array}\right] $ where $Y_{11}\in \mathbb{C}^{q\times q}$, $Y_{12}, Y_{21}, Y_{22}\in \mathbb{C}^{p\times p}$ are RUTM's with all the elements in the main diagonal of  $Y_{11}$ equal to 0 and all the elements in the main diagonal of $Y_{22}$ equal to 1. It is straightforward to see that the elements in the main diagonal of $T_1T_2$ are all equal to $0$. So, $T_1T_2\neq T_1$ and $T_1T_2\neq T_2$. Thus, $T_1 $ and $T_2$ are incomparable.

If $q=p$ then all $T\in \delta_J\setminus\{O, I_r\}$ have the same rank by Lemma \ref{lemmarangos} and thus they are pairwise incomparable by Remark \ref{remarklongitudcadena}. \qedhere
\end{enumerate}
\end{proof}

As a consequence of the previous theorem and the isomorphism between the poset $\delta=\delta_J$ and the down-set of $B$, we can describe this last poset for the case where the Jordan canonical form $J$ of $\Sigma K$ has only one eigenvalue and no more than two Jordan blocks associated to it.

\begin{theorem}\label{teo:unicoautovalor} Let $B\in \mathbb{C}_1^{n}$ be as in (\ref{hartwigB}) where $\Sigma K$ has only one eigenvalue $\lambda$ and $J=J(\lambda)$ is the Jordan canonical form of $\Sigma K$.
\begin{enumerate}[(a)]
 \item If the geometric multiplicity of $\lambda$ is 1 then $\delta$ and $[O, B]$ are lattices (chains) with only two elements.
 \item  If the geometric multiplicity of $\lambda$ is 2 then $\delta$ and $[O, B]$ are lattices as shown in Figure \ref{drank2}.  
\end{enumerate}
\end{theorem}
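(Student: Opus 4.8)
The plan is to derive Theorem~\ref{teo:unicoautovalor} as an almost immediate consequence of the two isomorphisms already established, namely $[O,B]\simeq\tau$ (Theorem~\ref{isomorfismo}) and $\tau\simeq\delta$ (Proposition~\ref{isomorfismo-con-delta}), together with the classification of $\delta_J=\delta$ carried out in the preceding (unnumbered) theorem. Composing $\phi$ and $\psi$ gives an isomorphism $\phi^{-1}\circ\psi\colon\delta\to[O,B]$ of posets, so $[O,B]$ is order-isomorphic to $\delta_J$. Since the ambient matrix $B$ is fixed with $\Sigma K$ having a single eigenvalue $\lambda$ whose Jordan canonical form is $J=J(\lambda)$, the poset $\delta$ is exactly the poset $\delta_J$ described in that theorem, and the whole argument reduces to transporting its description through the isomorphism.

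First I would handle part $(a)$. When the geometric multiplicity of $\lambda$ is $1$, the previous theorem gives $\delta_J=\{O,I_r\}$, a two-element chain. Under the isomorphism this corresponds to the two-element chain $\{O,B\}$ in $[O,B]$: the least element $O\in\delta$ maps to $O\in[O,B]$ and the greatest element $I_r$ maps to $B$, since $\phi(B)=I_r$ and $\psi(I_r)=I_r$. A poset isomorphic to a two-element chain is itself a two-element chain, hence trivially a lattice, which is what part~$(a)$ asserts. For part $(b)$, when the geometric multiplicity is $2$, the previous theorem states that $\delta_J$ has a least element $O$, a greatest element $I_r$, and infinitely many pairwise incomparable elements strictly between them, with the Hasse diagram of Figure~\ref{drank2}; that theorem also records that this poset is a lattice. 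I would simply note that any poset isomorphic to a lattice is a lattice with the same Hasse diagram, so $[O,B]\simeq\delta$ inherits both the lattice structure and the diagram of Figure~\ref{drank2}.

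The main point I would make explicit is why a poset of the shape ``bottom, top, and an antichain in between'' is a lattice: for any two of the incomparable middle elements $A_1,A_2$, their only common lower bound is $O$ and their only common upper bound is $B$, so $A_1\wedge A_2=O$ and $A_1\vee A_2=B$ exist; all other pairs involve $O$ or $B$ and are trivially handled. This is precisely the content already asserted for $\delta_J$ in the previous theorem, so I would invoke it rather than reprove it. I would also remark that the rank-preservation of both $\phi$ (Theorem~\ref{isomorfismo}) and $\psi$ confirms that the middle elements of $[O,B]$ all have rank strictly between $0$ and $r=\rk(B)$, consistent with Remark~\ref{remarklongitudcadena}.

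I do not expect any genuine obstacle here, since the substantive combinatorial work is entirely contained in Lemma~\ref{lemmarangos} and the preceding theorem classifying $\delta_J$. The only thing requiring a small amount of care is the bookkeeping of the composite isomorphism: one must verify that the endpoints match up, i.e.\ that $O\mapsto O$ and $I_r\mapsto B$, and that ``isomorphic as posets'' transfers the property ``is a lattice'' and the specific Hasse diagram. Both are standard facts about poset isomorphisms, so the proof is essentially a one-line appeal to the composition of the two established isomorphisms followed by citation of the classification of $\delta_J$.
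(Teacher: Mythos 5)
Your proposal is correct and follows exactly the route the paper takes: the paper states this theorem as an immediate consequence of the preceding classification of $\delta_J$ together with the isomorphisms $[O,B]\simeq\tau\simeq\delta$, giving no further proof. Your additional checks (endpoints $O\mapsto O$, $I_r\mapsto B$, and the explicit verification that a bounded antichain is a lattice) only make explicit what the paper leaves implicit.
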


Now we analize the general case where $\Sigma K$ has more than one eigenvalue. First, we study the case in which the geometric multiplicity of each eigenvalue is less than or equal to 2.

\begin{theorem}\label{thm:descripciondelta} Let $B\in \mathbb{C}_1^{n}$ be written as in (\ref{hartwigB}) where $\Sigma K$ has $s$ distinct eigenvalues such that the geometric multiplicity of each one of them is less or equal to 2. Then $\delta$ is isomorphic to a finite direct product of lattices that are either chains of two elements or isomorphic to the lattice whose diagram is given in Figure \ref{drank2}.  
\end{theorem}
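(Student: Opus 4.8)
The plan is to exploit the block-diagonal structure forced by commutation with $J$ in order to factor $\delta$ as a direct product indexed by the distinct eigenvalues, and then to identify each factor by means of the single-eigenvalue description obtained above.

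First I would record that, by the characterization (\ref{TJ=JT}), a matrix $T$ belongs to $\delta=\delta_J$ if and only if $T=\mathrm{diag}(D_1,\dots,D_s)$, where each $D_j\in\Crjrj$ satisfies $D_jJ(\lambda_j)=J(\lambda_j)D_j$. Since multiplication of block-diagonal matrices is carried out blockwise, one has $T^2=\mathrm{diag}(D_1^2,\dots,D_s^2)$, so the condition $T^2=T$ is equivalent to $D_j^2=D_j$ for every $j$. Hence $T\in\delta_J$ if and only if $D_j\in\delta_{J(\lambda_j)}$ for all $j$. This lets me define the map
\[\Phi\colon \delta_J\longrightarrow \prod_{j=1}^s \delta_{J(\lambda_j)}, \qquad \Phi(T)=\langle D_1,\dots,D_s\rangle,\]
which is clearly a bijection, its inverse being $\langle D_1,\dots,D_s\rangle\mapsto \mathrm{diag}(D_1,\dots,D_s)$.

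Next I would verify that $\Phi$ is an isomorphism of posets. For $T,T'\in\delta_J$ with associated blocks $D_j$ and $D_j'$, the products $TT'$ and $T'T$ are again block-diagonal, with blocks $D_jD_j'$ and $D_j'D_j$ respectively. Therefore $T\leq T'$, that is $T=TT'=T'T$, holds if and only if $D_j=D_jD_j'=D_j'D_j$ for every $j$, which is exactly $D_j\leq D_j'$ in $\delta_{J(\lambda_j)}$ for all $j$, and this is precisely the product order. Consequently both $\Phi$ and $\Phi^{-1}$ are order-preserving and $\delta_J\simeq \prod_{j=1}^s \delta_{J(\lambda_j)}$.

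Finally, I would invoke the single-eigenvalue description established above: when the geometric multiplicity of $\lambda_j$ equals $1$ the factor $\delta_{J(\lambda_j)}$ is the two-element chain $\{O,I_{r_j}\}$, while when it equals $2$ the factor is the lattice whose Hasse diagram appears in Figure \ref{drank2}. As each factor is a lattice and a finite direct product of lattices is again a lattice (as recalled in the Preliminaries), the isomorphism above exhibits $\delta$ as a finite direct product of lattices of exactly the two stated types, which is the assertion. The one point requiring care is the order-preservation step: one must check that the blockwise nature of both the idempotency relation and the defining equalities of $\leq$ turns the partial order on $\delta_J$ precisely into the product order, and it is here that the block-diagonality coming from (\ref{TJ=JT}) is essential.
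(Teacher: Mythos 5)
Your proposal is correct and follows essentially the same route as the paper's proof: decompose each $T\in\delta$ blockwise via (\ref{TJ=JT}), observe that idempotency and the order relation are both blockwise so that $\delta\simeq\prod_{j=1}^s\delta_{J(\lambda_j)}$ with the product order, and then identify each factor using the single-eigenvalue description. The only cosmetic difference is that you make the bijection $\Phi$ and the order-preservation check explicit, whereas the paper states them more tersely.
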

 \begin{proof} Let $T_i\in \delta$, $i\in \{1,2\}$. Then $T_i= \textrm{diag}(D_{i1}, \cdots, D_{is})$ where each $D_{ij}\in \delta_{J(\lambda_j)}$. Moreover, $T_1\leq T_2$ if and only if $\textrm{diag}(D_{11}, \cdots, D_{1s})\leq \textrm{diag}(D_{21}, \cdots, D_{2s})$ if and only if $D_{1j}\leq D_{2j}$ for each $j\in \{1, \cdots, s\}$.
 Applying Theorem \ref{teo:unicoautovalor} for each $j$, we have that $\delta = \delta_{J(\lambda_1)}\times \cdots \times \delta_{J(\lambda_s)}$ where $\delta_{J(\lambda_j)}$ is either a chain of two elements or the lattice depicted in Figure \ref{drank2}. 
 It is easy to see that $T_1\vee T_2=\textrm{diag} (D_{11}\vee D_{21},  \dots, D_{1s}\vee D_{2s}) $ and $T_1\wedge T_2=\textrm{diag} (D_{11}\wedge D_{21},  \dots, D_{1s}\wedge D_{2s}) $. 
 \end{proof}

\begin{corollary}\label{sharpBoolean} Let $B\in \mathbb{C}_1^{n}$ be decomposed as in (\ref{hartwigB}) where $\Sigma K$ has $s$ distinct eigenvalues and $J$ is the Jordan canonical form of $\Sigma K$. If $J$ has only one Jordan block for each eigenvalue, that is $t_j=1$ in (\ref{Jordan Form 2}) for all $j\in \{1,\dots, s\}$, then $\delta$, and in consequence the down-set $[O, B]$ and $\tau$, are Boolean algebra with $2^s$ elements.
\end{corollary}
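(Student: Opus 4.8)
The plan is to reduce everything to the direct-product description already obtained and then recognize the resulting poset as a finite power of the two-element Boolean algebra, transporting the conclusion back through the isomorphisms.

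First I would invoke Theorem \ref{thm:descripciondelta}. The hypothesis $t_j=1$ for every $j$ means that each eigenvalue $\lambda_j$ is associated with a single Jordan block, i.e.\ has geometric multiplicity $1$; in particular every geometric multiplicity is $\leq 2$, so that theorem applies and gives the lattice decomposition $\delta\simeq\delta_{J(\lambda_1)}\times\cdots\times\delta_{J(\lambda_s)}$, with meet and join computed coordinatewise. Next I would identify each factor: since $\lambda_j$ has geometric multiplicity $1$, the single-eigenvalue analysis (the characterization yielding $\delta_{J(\lambda_j)}=\{O,I_{r_j}\}$, cf.\ Theorem \ref{teo:unicoautovalor}(a)) shows that $\delta_{J(\lambda_j)}$ is a two-element chain, which is exactly the two-element Boolean algebra $\mathbf{2}$.

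Then I would use that a finite direct product of Boolean algebras is again a Boolean algebra: working coordinatewise, the complement of $\langle D_1,\dots,D_s\rangle$ is $\langle D_1',\dots,D_s'\rangle$ and the distributive law holds in each coordinate, using precisely the product operations recalled in the preliminaries. Hence $\delta\simeq\mathbf{2}^s$ is a Boolean algebra, and counting elements gives $|\delta|=2^s$. Finally, since $\phi$ and $\psi$ are order isomorphisms among the lattices $[O,B]$, $\tau$ and $\delta$, and an order isomorphism between lattices automatically preserves $\wedge$, $\vee$, the bottom and top elements, and therefore complements, the full Boolean structure transfers; thus $[O,B]$ and $\tau$ are Boolean algebras with $2^s$ elements as well.

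The argument is essentially routine, so there is no serious obstacle. The only points deserving a sentence of care are the remark that the order isomorphisms carry the entire Boolean structure (this holds because meets, joins and bounds are determined by the order alone, so no extra verification beyond order-preservation is needed), and the bookkeeping that the number of elements of the product is the product $\prod_{j=1}^s 2 = 2^s$ of the sizes of the two-element factors.
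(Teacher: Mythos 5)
Your proof is correct and follows the same route the paper intends: the corollary is stated as an immediate consequence of Theorem \ref{thm:descripciondelta} (product of $s$ two-element chains when every $t_j=1$), recognized as $\mathbf{2}^s$, and transported to $[O,B]$ and $\tau$ via the order isomorphisms. Nothing is missing.
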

 
It is well-known that a lattice $R$ is nondistributive if and only if there is a sublattice of $R$ isomorphic to $M_5$, whose Hasse diagram is given in Figure \ref{M5} (see for example \cite[Theorem 3.6]{Sanka}). So, the next result holds.  
\begin{figure}[ht]
 \centering
\includegraphics[scale=0.7]{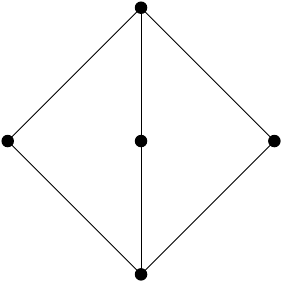}
\caption{The lattice $M_5$}\label{M5}
\end{figure}

\begin{corollary} If $J$ is such that some $t_j=2$ in (\ref{Jordan Form 2}) then $[O, B]$ is nondistributive. 
\end{corollary}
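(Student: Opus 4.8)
The plan is to exhibit, inside the lattice $\delta$ (which is isomorphic to $[O,B]$ by Theorem \ref{isomorfismo} and Proposition \ref{isomorfismo-con-delta}), a sublattice isomorphic to $M_5$, and then invoke the characterization of nondistributive lattices recalled just above the statement. Since distributivity is preserved under lattice isomorphisms, proving that $\delta$ is nondistributive will immediately give the same for $[O,B]$. Throughout I work in the setting of Theorem \ref{thm:descripciondelta}, which guarantees that $\delta$ is a lattice, that $\delta=\delta_{J(\lambda_1)}\times\cdots\times\delta_{J(\lambda_s)}$, and that meet and join in $\delta$ are computed blockwise.

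First I would isolate the factor responsible for the failure of distributivity. By hypothesis some eigenvalue $\lambda_j$ has geometric multiplicity $t_j=2$, so by Theorem \ref{thm:descripciondelta} the corresponding factor $\delta_{J(\lambda_j)}$ is the lattice of Figure \ref{drank2}: it has least element $O$, greatest element $I_{r_j}$, and between them infinitely many pairwise incomparable projectors. I would fix three distinct such middle projectors $T_1,T_2,T_3\in\delta_{J(\lambda_j)}\setminus\{O,I_{r_j}\}$. Being pairwise incomparable with no element strictly between them and either extreme, they satisfy $T_a\wedge T_b=O$ and $T_a\vee T_b=I_{r_j}$ for all $a\neq b$; this is read directly off the Hasse diagram in Figure \ref{drank2}.

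Next I would lift these to genuine elements of $\delta$ by padding with zero blocks. Writing elements of $\delta$ as block-diagonal matrices $\mathrm{diag}(D_1,\dots,D_s)$ with $D_k\in\delta_{J(\lambda_k)}$, I set $A_a=\mathrm{diag}(O,\dots,T_a,\dots,O)$ with $T_a$ in the $j$-th block, together with $\mathbf{0}=\mathrm{diag}(O,\dots,O)$ and $\mathbf{1}=\mathrm{diag}(O,\dots,I_{r_j},\dots,O)$. Each of these is idempotent and commutes with $J$, hence belongs to $\delta$. Using the blockwise computation of the lattice operations (from the proof of Theorem \ref{thm:descripciondelta}) together with $T_a\wedge T_b=O$ and $T_a\vee T_b=I_{r_j}$, a short check gives $A_a\wedge A_b=\mathbf{0}$ and $A_a\vee A_b=\mathbf{1}$ for $a\neq b$, while $\mathbf{0}$ and $\mathbf{1}$ behave as bottom and top. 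Hence $\{\mathbf{0},A_1,A_2,A_3,\mathbf{1}\}$ is closed under $\wedge$ and $\vee$ and is a sublattice of $\delta$ isomorphic to the lattice $M_5$ of Figure \ref{M5}. By the recalled criterion, $\delta$ is nondistributive, and therefore so is $[O,B]$.

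The only delicate point is verifying that the three padded middle elements meet to $\mathbf{0}$ and join to $\mathbf{1}$; this rests entirely on the structural description of $\delta_{J(\lambda_j)}$ in Figure \ref{drank2} (pairwise incomparable elements with nothing between them and the two extremes) and on the blockwise evaluation of $\wedge$ and $\vee$, both already established in Theorem \ref{thm:descripciondelta}. Everything else is a routine closure check.
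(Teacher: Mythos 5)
Your proof is correct and follows exactly the argument the paper leaves implicit: pick three of the infinitely many pairwise incomparable middle elements of the factor $\delta_{J(\lambda_j)}$ with $t_j=2$ (Figure \ref{drank2}), pad with zero blocks, and observe that together with the local bottom and top they form a sublattice isomorphic to $M_5$, which forces nondistributivity. Your explicit restriction to the setting of Theorem \ref{thm:descripciondelta} (so that $[O,B]$ is actually a lattice) is the right reading of the corollary.
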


In the next example we show the lattice $[O, B]$ where $\Sigma K$ has two distinct eigenvalues, $\lambda_1$ and $\lambda_2$, such that the geometric multiplicity of $\lambda_1$ is 2 and the geometric multiplicity of $\lambda_2$ is 1.
\begin{example}\label{ejemplolattice} Let $B \in {\mathbb C}_1^{n}$ be such that $\Sigma K= PJP^{-1}$ with
$
J=  \begin{bmatrix}
J(\lambda_1) & O  \\
O & J(\lambda_2)
\end{bmatrix}, \quad
J(\lambda_1)=  \begin{bmatrix}
J_1(\lambda_1) & O  \\
O & J_2(\lambda_1)
\end{bmatrix}, \quad \text{ and } \quad
J(\lambda_2) =J_1(\lambda_2)
$ with $\lambda_1\neq \lambda_2$, that is $t_1=2$ and $t_2=1$. Then $[O, B]$ is the lattice presented in Figure \ref{producto} where $\phi(A_1)=\textrm{diag}(D_1,O)$, $D_1\in \mathbb{C}^{r_1\times r_1}$ a projector as in Lemma \ref{lemmarangos}, $\phi(A_2)=\textrm{diag}(I_{r_1}, O)$, $\phi(A_3)=\textrm{diag}(O, I_{r_2})$ and $\phi(A_4)=\textrm{diag}(D_1,I_{r_2})$.
\begin{figure}[ht]
 \centering
\includegraphics[scale=0.7]{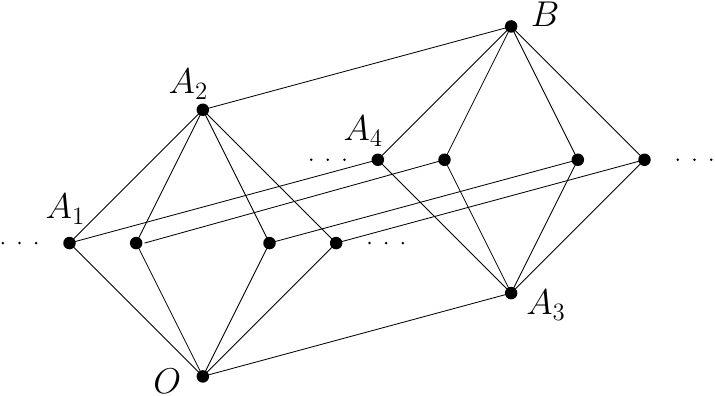}
\caption{$[O, B]$ with $t_1=2$ and $t_2=1$}\label{producto}
\end{figure} 
\end{example}

In the next theorem we give the characterization of $[O, B] $ as a lattice in terms of the Jordan canonical form of $\Sigma K$.

\begin{theorem}\label{teo:tj1o2}  Let $B\in \mathbb{C}_1^{n}$ be written as in (\ref{hartwigB}) and let $J=\textrm{diag}(J(\lambda_1),J(\lambda_2), \dots, J(\lambda_s ))$ be the Jordan canonical form of $\Sigma K$ as in (\ref{Jordan Form}). Then, $[O, B]$ is a lattice if and only if $t_j\in \{1, 2\}$ for all $1\leq j\leq s$.
\end{theorem}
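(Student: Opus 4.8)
The plan is to run the whole argument through the chain of isomorphisms $[O,B]\simeq\tau\simeq\delta=\delta_J$ furnished by Theorem \ref{isomorfismo} and Proposition \ref{isomorfismo-con-delta}, since being a lattice is invariant under poset isomorphism. The implication from right to left is essentially already available: if $t_j\in\{1,2\}$ for every $j$, then Theorem \ref{thm:descripciondelta} presents $\delta$ as a finite direct product of lattices (two-element chains and copies of the lattice in Figure \ref{drank2}), and a direct product of lattices is a lattice, so $[O,B]$ is a lattice. Hence the real content is the forward implication, which I would prove by contraposition: assuming some $t_{j}\geq 3$, I would show that $\delta_J$, and therefore $[O,B]$, fails to be a lattice.

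The first reduction is to a single eigenvalue carrying exactly three Jordan blocks. Fix $\lambda=\lambda_{j}$ with $t_{j}\geq 3$ and let $V_1,V_2,V_3$ be the subspaces of three of its Jordan blocks. Let $P_{123}\in\delta_J$ be the idempotent projecting onto $V_{123}:=V_1\oplus V_2\oplus V_3$ and annihilating every other block. Since an interval of a lattice is again a lattice (greatest lower and least upper bounds of elements of $[a,b]$ computed in the ambient lattice stay in $[a,b]$), it suffices to prove that the interval $[O,P_{123}]$ of $\delta_J$ is not a lattice. Writing $T\leq P_{123}$ as $T=TP_{123}=P_{123}T$, one sees that $T\in[O,P_{123}]$ exactly when $T$ is supported on $V_{123}$ and commutes with the restriction of $J$ there, whence $[O,P_{123}]\simeq\delta_N$ via $T\mapsto T|_{V_{123}}$, where $N:=(J-\lambda I)|_{V_{123}}$ is nilpotent with exactly three Jordan blocks. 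The problem becomes showing that $\delta_N$ is not a lattice.

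For this I would use the dictionary for idempotents commuting with $N$: such $T$ correspond bijectively to pairs $(\mathcal R(T),\ker T)$ of complementary $N$-invariant subspaces, and $T'\leq T''$ holds precisely when $\mathcal R(T')\subseteq\mathcal R(T'')$ and $\ker T''\subseteq\ker T'$. Let $v_2$ be the eigenvector (socle) of $V_2$ and $w_3$ the cyclic generator of $V_3$. Take $T_1$ with range $V_3$ and $T_2$ with range $\{x+\phi(x):x\in V_3\}$, where $\phi\colon V_3\to V_2$ is the rank-one $N$-homomorphism sending $w_3\mapsto v_2$, both with the common kernel $V_1\oplus V_2$; these are distinct incomparable elements of $\delta_N$ and $v_2\in\mathcal R(T_1)+\mathcal R(T_2)$. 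Pick any nonzero $h\in\mathrm{Hom}_N(V_1,V_2)$; since $V_2$ is uniserial, $\operatorname{im}(h)$ contains $v_2$. Then the idempotents $U_0$ and $U_1$ with common range $V_2\oplus V_3$ and kernels $V_1$ and $\mathrm{graph}(h)$, respectively, are both upper bounds of $\{T_1,T_2\}$. If a supremum $S$ existed, then $S\leq U_0,U_1$ would force $V_1\subseteq\ker S$ and $\mathrm{graph}(h)\subseteq\ker S$, hence $\operatorname{im}(h)\subseteq\ker S$ and in particular $v_2\in\ker S$; but $S\geq T_1,T_2$ forces $\mathcal R(S)\supseteq\mathcal R(T_1)+\mathcal R(T_2)\ni v_2$, contradicting $\mathcal R(S)\cap\ker S=\{0\}$. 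Thus $\{T_1,T_2\}$ has no supremum and $\delta_N$ is not a lattice.

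The decisive and least routine step is the simultaneous construction of the two upper bounds $U_0,U_1$ together with the choice of $h$ whose image meets the socle $v_2$: this is exactly what makes the two minimal upper bounds incompatible and yields the contradiction \emph{uniformly} in the three block sizes, with no auxiliary hypothesis (such as equal sizes or the presence of a $1\times1$ block). Everything else—invariance of the lattice property under the isomorphisms, the interval reduction $[O,P_{123}]\simeq\delta_N$, and the verification that $T_1,T_2,U_0,U_1$ are legitimate commuting idempotents with the stated ranges and kernels—is straightforward bookkeeping, which in the matrix language of the paper amounts to writing these projectors through the block/RUTM description (\ref{cullen}) and comparing traces as in Lemma \ref{lemmarangos}.
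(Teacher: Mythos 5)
Your proposal is correct, and the right-to-left direction coincides with the paper's (it is exactly the appeal to Theorem \ref{thm:descripciondelta}). For the left-to-right direction you and the paper share the same underlying strategy --- use a third Jordan block attached to one eigenvalue to manufacture a pair of elements of $\delta$ admitting two incomparable upper bounds and hence no supremum --- but the execution is genuinely different. The paper works entirely in the RUTM block coordinates of (\ref{cullen}): it writes down explicit matrices $G_1,\dots,G_4$, proves by direct computation that the two upper bounds each \emph{cover} the two lower elements (the interval argument ``$T_2\leq T\leq T_3$ implies $T\in\{T_2,T_3\}$''), and must split the construction of the second upper bound into the cases $r_{11}=r_{21}$ and $r_{11}>r_{21}$. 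You instead pass to the coordinate-free dictionary between idempotents commuting with $N$ and pairs of complementary $N$-invariant subspaces, ordered by inclusion of ranges and reverse inclusion of kernels, and you replace the covering computation by a direct clash: any supremum $S$ would have the socle vector $v_2$ in both $\mathcal R(S)$ (from below, via $\mathcal R(T_1)+\mathcal R(T_2)$) and $\ker S$ (from above, via the kernels $V_1$ and $\mathrm{graph}(h)$ of the two upper bounds). This eliminates the case distinction on block sizes and shortens the verification; the price is the two reductions you invoke --- that an interval of a lattice is a sublattice, and that $[O,P_{123}]\simeq\delta_N$ --- both of which you state and use correctly, since $T\leq P_{123}$ forces $T$ to be supported on $V_1\oplus V_2\oplus V_3$ and commuting with $J$ there is the same as commuting with $N$. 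I checked the key verifications ($\phi$ and $h$ are $N$-homomorphisms, the graphs are $N$-invariant complements, $\operatorname{im}(h)$ contains $v_2$ because the invariant subspaces of a single Jordan block form a chain) and they all go through. One minor difference: the paper also shows that a meet fails ($T_3\wedge T_4$ does not exist), which is redundant for the theorem and which you rightly omit, since the failure of a single supremum already disproves the lattice property.
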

\begin{proof} Let us suppose that $t_j\geq 3$ for some $1\leq j\leq s$. Without loss of generality, we can assume  $t_1\geq 3$. Then, there are at least three Jordan blocks $J_k(\lambda_1)\in \mathbb{C}^{r_{k1}\times r_{k1}}$, for $k\in \{1,2,3\}$ associated to the eigenvalue $\lambda_1$. We can assume also that the sizes of the matrices satisfy that $r_{11}\geq r_{21}\geq r_{31}$.

We will find $T_1, T_2, T_3, T_4\in \delta$ such that neither the infimum of $T_3$ and $T_4$ nor the supremum of $T_1$ and $T_2$ exist (see Figure \ref{fig:nolattice}), which leads to a contradiction.
\begin{figure}[ht]
 \centering
\includegraphics[scale=0.7]{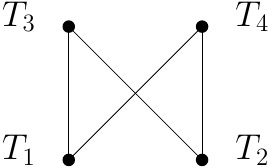}
\caption{Neither $T_1\vee T_2$ nor $T_3\wedge T_4$ exist}\label{fig:nolattice}
\end{figure} 

Let $i\in \{1, 2, 3, 4\}$, $T_i=\textrm{diag}(D_{i1}, D_{i2}, \cdots, D_{is})$ as in (\ref{TJ=JT}) with $D_{ij}=O$ if $j>1$ and $D_{i1}=\begin{bmatrix}
           G_i & O \\                                                                                                                   
            O  & O                                                                                                               \end{bmatrix}
$. Here the size of the $G_i$ is $(r_{11}+r_{21}+r_{31})\times (r_{11}+r_{21}+r_{31})$. 
 
 Let us consider $X=\begin{bmatrix} I_{r_{21}} \\ O                                                                                                                                                                        
                                                                                                                                                        \end{bmatrix}
 \in \mathbb{C}^{r_{11}\times r_{21}}$ and  $Y=\begin{bmatrix} O & I_{r_{31}}                                                                                                                                                                     
                                                                                                                                                        \end{bmatrix}
 \in \mathbb{C}^{r_{31}\times r_{21}}$. It is easy to see that $J_1(\lambda_1) X=XJ_2 (\lambda_1)$ and $J_3(\lambda_1)Y=YJ_2(\lambda_1)$. We also consider $G_1=\begin{bmatrix}
  O & X & O\\                                                                                                      
  O & I_{r_{21}} & O\\
  O & O & O
  \end{bmatrix}
$, $G_2=\begin{bmatrix}
  O & X & O\\                                                                                                      
  O & I_{r_{21}} & O\\
  O & Y & O
  \end{bmatrix}
$ and $G_3=\begin{bmatrix}
  O & X & O\\                                                                                                      
  O & I_{r_{21}} & O\\
  O & O & I_{r_{31}}
  \end{bmatrix}
$. It is straightforward to see that $T_1, T_2, T_3\in \delta$, $T_1\leq T_3$, $T_2\leq T_3$, and $T_1$ is incomparable with $T_2$. Let $T\in \delta$ such that $T_2\leq T\leq T_3$. Then $T_2=T$ or $T_3=T$. Indeed, from $T\leq T_3$ we have that $T= \textrm{diag}(D_1, O, \dots, O)$ where $D_1=\begin{bmatrix}
           D & O \\                                                                                                                   
            O  & O                                                                                                               \end{bmatrix}$ with $D=\begin{bmatrix}
           R_{11} & R_{12} & R_{13}\\                                                                                                                   
            R_{21}  & R_{22} & R_{23}\\
            R_{31} & R_{32} & R_{33}
            \end{bmatrix}$ as in (\ref{cullen}), $R_{kk}\in \mathbb{C}^{r_{k1}\times r_{k1}}$, $1\leq k\leq 3$.  
Since $T_2= T_2T$ then $R_{22}= I_{r_{21}}$, $R_{21}=O$ and $R_{23}=O$. By $T=TT_3=T_3T$ we have that $R_{11}=O$, $R_{31}=O$, $R_{12}=X$ and $R_{13}=O$, that is $D=\begin{bmatrix}
                                                                                                                                                              O & X & O\\
                                                                                                                                                              O & I_{r_{21}} & O\\
                                                                                                                                                              O & R_{32} & R_{33}
                                                                                                                                                             \end{bmatrix}
$. 
From $T_2=TT_2$ we obtain $R_{32}+ R_{33}Y=Y$. Now, using that $T^2=T$ we have that $R_{33}^2= R_{33}$ and $R_{33}R_{32}= O$. Since $R_{33}$ is a RUTM, by Remark \ref{idempotente RUTM} we arrive at $R_{33}=O $ or $R_{33}=I_{r_{31}} $. If $R_{33}=O $ then $R_{32}=Y  $ and $T=T_2$. If $R_{33}=I_{r_{31}} $ then $R_{32}=O  $ and $T=T_3$. Analogously, we can prove that if $T\in \delta$ and $T_1\leq T\leq T_3$, then $T_1=T$ or $T_3=T$.

In order to construct $T_4$ we consider two cases. If $r_{11}=r_{21}$ then $G_4=\begin{bmatrix}
  O & X & O\\                                                                                                      
  O & I_{r_{21}} & O\\
  Z & Y & I_{r_{31}}
  \end{bmatrix} $ where $Z=\begin{bmatrix} 
  O & -I_{r_{31}}
  \end{bmatrix}\in \mathbb{C}^{r_{31}\times r_{11}}$. If $r_{11}>r_{21}$ then $G_4=\begin{bmatrix}
  O & X & O\\                                                                                                      
  O & I_{r_{21}} & O\\
  Z & O & I_{r_{31}}
  \end{bmatrix} $ with $Z=  (z_{ij})\in \mathbb{C}^{r_{31}\times r_{11}}$, where $z_{ij}=\begin{cases}
                                                               -1 & \text { if } i=1 \text{ and } j=r_{11} \\
                                                              0 & \text { otherwise }
                                                              \end{cases}
$. Taking into account that $ZJ_1(\lambda_1)=J_3(\lambda_1)Z$, it is straightforward to see that $T_4\in \delta$, $T_3$ and $T_4$ are incomparable, and that $T_4$ covers $T_1$ and $T_2$. This means that $[O, B]$ is not a lattice. 

The ``if'' part is given by Theorem \ref{thm:descripciondelta}. 
\end{proof}

\section{A distinguished Boolean algebra in the down-set}\label{sec: Boolean algebra}

In this section we prove that there exists a distinguished poset within the down-set of a matrix $B$ that turns out to be a Boolean algebra and describe its elements. We present this algebra in the next Lemma.

\begin{lemma}\label{lemma:subalgebrabooleana}  The poset $\mathcal{C}=\left\{ T\in \tau : T\widetilde{T}=\widetilde{T}T \textrm{ for all } \widetilde{T}\in \tau \right\}$ ordered by $\leq$ is a Boolean algebra.
\end{lemma}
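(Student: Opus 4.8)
The plan is to recognize $\mathcal{C}$ as the set of elements of $\tau$ that commute with every element of $\tau$, and to exploit the fact that any two members of $\mathcal{C}$ commute with one another, so that Lemma~\ref{lem:supinfcuandoconmutan} applies to every pair in $\mathcal{C}$. I would first show that $\mathcal{C}$ is a sublattice of $\tau$ containing $O$ and $I_r$, then that it is complemented, and finally that it is distributive; by the definition of a Boolean algebra as a complemented distributive lattice given in the preliminaries, this finishes the proof.

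For the sublattice step, take $T_1,T_2\in\mathcal{C}$. Since they commute, Lemma~\ref{lem:supinfcuandoconmutan} gives that $T_1\wedge T_2=T_1T_2$ and $T_1\vee T_2=T_1+T_2-T_1T_2$ exist in $\tau$. I would then check that both lie in $\mathcal{C}$: for any $\widetilde{T}\in\tau$ one has $(T_1T_2)\widetilde{T}=T_1(\widetilde{T}T_2)=(T_1\widetilde{T})T_2=(\widetilde{T}T_1)T_2=\widetilde{T}(T_1T_2)$, using that $T_1$ and $T_2$ each commute with $\widetilde{T}$, and the same reasoning shows $T_1+T_2-T_1T_2$ commutes with $\widetilde{T}$. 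Because $\mathcal{C}\subseteq\tau$ is closed under the meet and join of $\tau$, and these greatest-lower and least-upper bounds already lie in $\mathcal{C}$, they remain the infimum and supremum computed inside $\mathcal{C}$; hence $\mathcal{C}$ is a lattice. Its least and greatest elements are $O$ and $I_r$, which clearly belong to $\mathcal{C}$.

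To see that $\mathcal{C}$ is complemented, I would take $T\in\mathcal{C}$ and consider $I_r-T$. As in Proposition~\ref{prop:complementadogrupo}, $I_r-T\in\tau$, and it commutes with every $\widetilde{T}\in\tau$ because $T$ and $I_r$ do, so $I_r-T\in\mathcal{C}$; since $T(I_r-T)=O$, Lemma~\ref{lem:supinfcuandoconmutan} yields $T\wedge(I_r-T)=O$ and $T\vee(I_r-T)=I_r$, so $I_r-T$ is a complement of $T$ in $\mathcal{C}$. For distributivity, I would verify one of the two laws by direct computation: with all products commuting, $T_1\wedge(T_2\vee T_3)=T_1(T_2+T_3-T_2T_3)=T_1T_2+T_1T_3-T_1T_2T_3$, whereas $(T_1\wedge T_2)\vee(T_1\wedge T_3)=T_1T_2+T_1T_3-(T_1T_2)(T_1T_3)$, and $(T_1T_2)(T_1T_3)=T_1T_2T_3$ by commutativity, so the two sides coincide.

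The only genuinely delicate point is the sublattice argument, namely that the operations inherited from $\tau$ really are the lattice operations of $\mathcal{C}$; this rests on the observation that restricting to a subset which is closed under already-existing infima and suprema cannot change them. Everything else reduces to short computations enabled by the mutual commutativity built into the definition of $\mathcal{C}$, together with the repeated verification that each constructed element ($T_1T_2$, $T_1+T_2-T_1T_2$, and $I_r-T$) again commutes with all of $\tau$.
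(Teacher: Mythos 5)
Your proposal is correct and follows essentially the same route as the paper: closure of $\mathcal{C}$ under $T_1T_2$, $T_1+T_2-T_1T_2$, and $I_r-T$, the meet/join formulas from Lemma~\ref{lem:supinfcuandoconmutan}, and a direct verification of distributivity, concluding that a complemented distributive lattice is a Boolean algebra. Your explicit remark that the infima and suprema computed in $\tau$ remain the lattice operations of $\mathcal{C}$ is a point the paper leaves implicit, but it is the same argument.
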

 \begin{proof}
It is clear that $O, I_r\in \mathcal{C}$ and if $T_1, T_2 \in \mathcal{C}$ then $T_1T_2, T_1+T_2-T_1T_2, I_r-T\in \mathcal{C}$. 
By Lemma \ref{lem:supinfcuandoconmutan}, $T_1 \wedge T_2 = T_1T_2$ and $T_1\vee T_2= T_1+T_2-T_1T_2$. Moreover, if $T_3 \in \mathcal{C}$
then $T_1\wedge(T_2\vee T_3)= (T_1\wedge T_2)\vee (T_1\wedge T_3)$. So, $\mathcal{C}$ is a complemented distributive lattice and this implies that 
it is a Boolean algebra.
\end{proof} 

Now we use the isomorphism $\psi\colon \delta\to \tau$ defined by $\psi (T)= PTP^{-1}$ in the previous section to describe the elements in the Boolean algebra. Since $\psi$ is an isomorphism, we know that the set $\psi^{-1}(\mathcal{C})$ is a Boolean algebra within the poset $\delta$. By the definition of $\psi$, it is easy to see that $\psi$ preserves the matrix product. Indeed, 
\[\psi (T_1)\psi (T_2)= PT_1P^{-1}PT_2P^{-1}= PT_1T_2P^{-1}= \psi(T_1T_2).\]

Thus, if $T\in \psi^{-1}(\mathcal{C})$ then $T$ commutes with all the projectors in $\delta$. Moreover, in particular $T\in \delta$ then by the characterization given in (\ref{TJ=JT}) of projectors that commutes with the Jordan canonical form $J = \textrm{diag} (J(\lambda_1), J(\lambda_2), \cdots, J(\lambda_s))$ of $\Sigma K$, we know that $T=  \textrm{diag} ( D_1 , D_2, \dots , D_s ) $ and $D_j= [R_{\alpha\beta}]$ where the submatrices $R_{\alpha \beta}$ have the form given in (\ref{cullen}). In the next lemma we characterize the submatrices $D_j$, for each $j$, and in consequence we obtain a description of the elements in the Boolean algebra $\psi^{-1}(\mathcal{C})$.  

\begin{lemma} Let $\mathcal{C}$ be the Boolean algebra defined in Lemma \ref{lemma:subalgebrabooleana} and $T\in \psi^{-1}( \mathcal{C})$. Then $T= \textrm{diag} ( D_1 , D_2, \dots , D_s ) $ with $D_j\in \{O, I_{r_j}\}$. Therefore, $\psi^{-1}( \mathcal{C})$ has $2^s$ elements, where $s$ is the number of distinct eigenvalues of $\Sigma K$. 
\end{lemma}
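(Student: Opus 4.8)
The plan is to exploit the two structural facts recorded just before the lemma. First, an element $T\in\psi^{-1}(\mathcal{C})$ lies in $\delta$ and commutes with \emph{every} projector in $\delta$: indeed $\psi$ preserves products, and $\mathcal{C}$ is precisely the set of elements of $\tau$ commuting with all of $\tau$. Second, since $T\in\delta$, the characterization (\ref{TJ=JT}) gives $T=\textrm{diag}(D_1,\dots,D_s)$ with each $D_j$ commuting with $J(\lambda_j)$. I would first reduce to a single eigenvalue block: for a fixed $j$, the idempotent $S=\textrm{diag}(O,\dots,S_j,\dots,O)$ having an arbitrary $S_j\in\delta_{J(\lambda_j)}$ in the $j$-th slot and zeros elsewhere belongs to $\delta$, so $TS=ST$ forces $D_jS_j=S_jD_j$. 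Hence each $D_j$ commutes with every projector in $\delta_{J(\lambda_j)}$, and it suffices to prove that such a $D_j$ must be $O$ or $I_{r_j}$.

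Within the block for $\lambda_j$, which contains $t_j$ Jordan blocks of sizes $r_{kj}$, I would next use the ``coordinate'' idempotents $E_k=\textrm{diag}(O,\dots,I_{r_{kj}},\dots,O)$, $1\le k\le t_j$. Each $E_k$ clearly commutes with the block-diagonal matrix $J(\lambda_j)$, so $E_k\in\delta_{J(\lambda_j)}$, and commuting with all of them forces $D_j$ to be block diagonal with respect to the Jordan blocks, say $D_j=\textrm{diag}(C_1,\dots,C_{t_j})$ with $C_k\in\Crkjrkj$. Each $C_k$ then commutes with the single Jordan block $J_k(\lambda_j)$, so by (\ref{TJ=JT})--(\ref{cullen}) it is a RUTM; being idempotent, Remark \ref{idempotente RUTM} yields $C_k\in\{O,I_{r_{kj}}\}$.

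It remains to rule out that the $C_k$ are not all equal, and this is the step I expect to be the main obstacle, since the Jordan blocks may have different sizes. Suppose two blocks carry different values; relabel so that the larger has size $q$ and the smaller size $p$ with $q\ge p$, and recall from the proof of Lemma \ref{lemmarangos} the intertwiner $X=\left[\begin{smallmatrix} I_p\\ O\end{smallmatrix}\right]$ satisfying $J_a X=X J_b$ for the corresponding blocks $J_a$ (size $q$) and $J_b$ (size $p$). The matrix $S_j$ equal to $\left[\begin{smallmatrix} O & X\\ O & I_p\end{smallmatrix}\right]$ on these two blocks and the identity on the remaining blocks of $\lambda_j$ is idempotent and, using $J_aX=XJ_b$, commutes with $J(\lambda_j)$, hence $S_j\in\delta_{J(\lambda_j)}$. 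A direct computation shows that in either assignment of the values (namely $\textrm{diag}(I_q,O_p)$ or $\textrm{diag}(O_q,I_p)$ on these two blocks) one has $D_jS_j\neq S_jD_j$, because $X\neq O$. This contradiction forces all the $C_k$ to coincide, so $D_j\in\{O,I_{r_j}\}$ for every $j$. Finally, since each of the $s$ diagonal blocks is chosen independently to be $O$ or the identity, $\psi^{-1}(\mathcal{C})$ has exactly $2^s$ elements.
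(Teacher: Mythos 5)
Your proof is correct and follows essentially the same route as the paper: coordinate idempotents force each $D_j$ to be block diagonal, Remark \ref{idempotente RUTM} pins each diagonal block to $O$ or the identity, and an intertwiner-built idempotent (the paper's $\widetilde{D}_j$, which links the first Jordan block to all the others at once, versus your pairwise version) rules out mixed assignments within one eigenvalue. The only cosmetic difference is that the paper handles all $t_j$ blocks of $\lambda_j$ with a single linking projector while you argue two blocks at a time; both are valid.
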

\begin{proof} First, we are going to construct a family $\{T_{kj}\} $ of projectors in $\delta$ as follows. For each $j$, $1\leq j\leq s$, and for each $k$, $1\leq k\leq t_j$ consider $T_{kj}= \textrm{diag} ( O, \dots , D_{kj}, \dots, O)  $ with $D_{kj}$ at the $j$-th position and $D_{kj}= [\widetilde{R}_{\alpha\beta}]\in \Crjrj$ as in (\ref{cullen}) where $\widetilde{R}_{\alpha\beta}=\begin{cases}
                I_{r_{kj}} & \text{if } \alpha=\beta=k  \\                                                                                                                                                              O & \text{elsewhere}                                                                                                                                                                                            \end{cases}
 $.  We recall that $r_{kj}\times r_{kj}$ is the size of the Jordan block $J_{k}(\lambda_j)$ of $J(\lambda_j)$. 

Take $T\in \psi^{-1}( \mathcal{C})$. Then $T= \textrm{diag} ( D_1 , D_2, \dots , D_s ) $ where $D_j=[R_{\alpha\beta}]\in \Crjrj$, for each $j\in \{1, \cdots, s\}$, and since $T$ commutes with the projectors of $\delta$, in particular $T$ commutes with the projectors $T_{kj}$. Now, $TT_{kj}=T_{kj}T$ iff $D_jD_{kj}= D_{kj}D_j$, for all $j$, iff $[R_{\alpha\beta}][\widetilde{R}_{\alpha\beta}]=[\widetilde{R}_{\alpha\beta}][R_{\alpha\beta}]$ iff
\begin{equation*}
\begin{split}
 \begin{bmatrix}
R_{11} & \cdots & R_{1k} & \cdots & R_{1t_j}\\
 \vdots & \ddots &  \vdots &  \vdots &  \vdots\\
 R_{k1} & \cdots & R_{kk} & \cdots & R_{kt_j}\\
 \vdots & \vdots &  \vdots &  \ddots &  \vdots\\
R_{t_j1} & \cdots & R_{t_jk} &  \cdots & R_{t_jt_j}
\end{bmatrix} \begin{bmatrix}
O & \cdots & O &  \cdots & O\\
\vdots & \ddots &  \vdots &  \vdots &  \vdots \\
O & \cdots & I_{r_{kj}} &  \cdots & O  \\
\vdots & \vdots &  \vdots &  \ddots &  \vdots\\
O & \cdots & O & \cdots & O
\end{bmatrix} =\begin{bmatrix}
O & \cdots & R_{1k} &  \cdots & O\\
\vdots & \ddots &  \vdots &  \vdots &  \vdots \\
O & \cdots & R_{kk} &  \cdots & O  \\
\vdots & \vdots &  \vdots &  \ddots &  \vdots\\
O & \cdots & R_{t_jk} & \cdots & O
\end{bmatrix}= \\
\begin{bmatrix}
O & \cdots & O &  \cdots & O\\
\vdots & \ddots &  \vdots &  \vdots &  \vdots \\
O & \cdots & I_{r_{kj}} &  \cdots & O  \\
\vdots & \vdots &  \vdots &  \ddots &  \vdots\\
O & \cdots & O & \cdots & O
\end{bmatrix} 
\begin{bmatrix}
R_{11} & \cdots & R_{1k} & \cdots & R_{1t_j}\\
 \vdots & \ddots &  \vdots &  \vdots &  \vdots\\
 R_{k1} & \cdots & R_{kk} & \cdots & R_{kt_j}\\
 \vdots & \vdots &  \vdots &  \ddots &  \vdots\\
R_{t_j1} & \cdots & R_{t_jk} &  \cdots & R_{t_jt_j}
\end{bmatrix} =
\begin{bmatrix}
O & \cdots & O & \cdots & O\\
 \vdots & \ddots &  \vdots &  \vdots &  \vdots\\
 R_{k1} & \cdots & R_{kk} & \cdots & R_{kt_j}\\
 \vdots & \vdots &  \vdots &  \ddots &  \vdots\\
O & \cdots & O &  \cdots & O
\end{bmatrix} 
\end{split}
\end{equation*}

\noindent iff $R_{\alpha k}=O$ and $R_{k\alpha}=O$ for all $\alpha\neq k$, $1\leq k\leq t_j$ and $1\leq \alpha\leq t_j$. Hence, $D_j=\textrm{diag}(R_{11}, R_{22}, \cdots, R_{t_jt_j})$. Since $D_j=D_j^2$ then $R_{kk}= R_{kk}^2$ and by Remark \ref{idempotente RUTM} we have that $R_{kk}\in \{O, I_{r_{kj}}\}$.

If the Jordan canonical form $J$ has only one Jordan block associated with the eigenvalue $\lambda_j$, that is if $t_j=1$, then $D_j=R_{11}$ and, in consequence $D_j\in\{O, I_{r_{j}}\}$. Hence, our claim has been proved in this particular case. 

Now, let us consider that $t_j>1$. Then $D_j=\textrm{diag}(R_{11}, R_{22}, \cdots, R_{t_jt_j})$. Without loss of generality we can assume that the sizes of the submatrices $R_{kk}$, $1 \leq k\leq t_j$, satisfy that $r_{1j}\geq r_{2j} \geq \cdots \geq r_{t_jj}$. To prove our claim, we consider the $j$ projectors in $\delta$ defined as follows $T_j=\textrm{diag}(O, \cdots, \widetilde{D}_j, \cdots, O)$ where $\widetilde{D}_j=\left[\begin{array}{c|ccc}
I_{r_{1j}} & O  & \cdots & O\\
\begin{matrix} O & I_{r_{2j}} 
\end{matrix}
&    O  &  \cdots & O \\

\vdots &  \vdots & \vdots &  \vdots \\
\begin{matrix} O & I_{r_{t_jj}} 
\end{matrix}
&    O &  \cdots & O \\
\end{array}\right] $. Then, $TT_j=T_jT $ if and only if $D_j\widetilde{D}_j=\widetilde{D}_jD_j$ if and only if
\begin{equation*}
\begin{split}
\begin{bmatrix}
R_{11}   & O & \cdots & O\\
O & R_{22} &    \cdots &  O\\
\vdots &  \vdots &  \ddots &  \vdots\\
 O & O &  \cdots & R_{t_jt_j}
\end{bmatrix} 
\left[ \begin{array}{c|ccc}
I_{r_{1j}} & O  & \cdots & O\\
\begin{matrix} O & I_{r_{2j}} 
\end{matrix}
&    O  &  \cdots & O \\
\vdots &  \vdots & \vdots &  \vdots \\
\begin{matrix} O & I_{r_{t_jj}} 
\end{matrix}
&    O &  \cdots & O \\
\end{array}\right] = \\
\left[ \begin{array}{c|ccc}
I_{r_{1j}} & O  & \cdots & O\\
\begin{matrix} O & I_{r_{2j}} 
\end{matrix}
&    O  &  \cdots & O \\
\vdots &  \vdots & \vdots &  \vdots \\
\begin{matrix} O & I_{r_{t_jj}} 
\end{matrix}
&    O &  \cdots & O \\
\end{array}\right]
\begin{bmatrix}
R_{11}   & O & \cdots & O\\
O & R_{22} &    \cdots &  O\\
\vdots &  \vdots &  \ddots &  \vdots\\
 O & O &  \cdots & R_{t_jt_j}
\end{bmatrix}
\end{split}
\end{equation*} 
 if and only if $R_{kk}\begin{bmatrix} O & I_{r_{kj}} 
\end{bmatrix} =\begin{bmatrix} O & I_{r_{kj}} 
\end{bmatrix}R_{11}$ for each $k$, $2\leq k\leq t_j$.  
From this and taking into account that $R_{kk}\in \{O, I_{r_{kj}}\}$,
 $R_{11}=O$ implies that $R_{kk}=O$ for each $k$, and $R_{11}=I_{r_{1j}}$ implies that $R_{kk}=I_{r_{kj}}$ for each $k$.
Therefore $D_j=O$ or $D_j= I_{r_j}$, and this completes the proof.  
 \end{proof}

As an example, if we consider the down-set of $B $ given in Example \ref{ejemplolattice}, then the Boolean subalgebra $\mathcal{C}$ has the elements $\{O,\phi(A_2), \phi(A_3), I_r \}$ depicted in Figure \ref{producto}.

\section{The poset $\mathbb C_1^n$}\label{sec:posetc1n}

In this section we prove that $\left( \mathbb{C}_1^n, \stackrel{_{\sharp}}\leq  \right)$ is a lower semillatice if and only if $n\leq 2$. In order to do that, we establish a characterization of the matrices that are above a given matrix $A$ in terms of its Jordan canonical form.

The Jordan decomposition of a given matrix $B$ provides us a description of matrices $A$ that are below $B$ (\cite{MiBhMa}). More precisally, let $A$ and $B$ be square matrices of the same size. Let $ B\in \mathbb{C}_1^n$ and let $B=P \textrm{diag}\left(J_1, \dots, J_l, O \right)P^{-1}$ be the Jordan decomposition of $B$, where $J_1, \dots, J_l$ are nonsingular Jordan blocks and $P$ is a nonsingular matrix. Let $A= P \textrm{diag}\left(D_1, \dots, D_l, O \right)P^{-1}$ where $D_i\in\{O, J_i\}$. It is easy to check that $A$ is of index less or equal 1 and $A\stackrel{\sharp}\leq B$. The converse of this statement in the special case where there is exactly one Jordan block corresponding to each nonzero eigenvalue is given in \cite[Theorem 4.3.13]{MiBhMa}.

\begin{theorem}\label{caracterizacionlibro}\cite[Theorem 4.3.13]{MiBhMa} Let $A$ and $B$ be square matrices of the same size. Let $B$ be of index less or equal 1 and $B= P \textrm{diag}\left(J_1, \dots, J_l, O \right)P^{-1}$ be the Jordan decomposition of $B$, where $J_1, \dots, J_l$ are nonsingular Jordan blocks corresponding to distinct eigenvalues $\lambda_1, \dots, \lambda_l$ and $P$ is a nonsingular matrix. Then $A$ is of index less or equal 1 and $A\stackrel{\sharp}\leq B$ if and only if $A= P \textrm{diag}\left(D_1, \dots, D_l, O \right)P^{-1}$ where $D_i\in\{O, J_i\}$. 
\end{theorem}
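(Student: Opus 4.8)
The plan is to derive this characterization from the Boolean‑algebra count already obtained in Corollary \ref{sharpBoolean}, so that the whole ``only if'' direction collapses into a cardinality argument rather than a direct manipulation of the group inverse. The ``if'' direction is the routine one and I would only sketch it: for $A = P\operatorname{diag}(D_1,\dots,D_l,O)P^{-1}$ with $D_i\in\{O,J_i\}$, a block computation gives $A^2 = AB = BA$, since on each diagonal block one has $D_iJ_i = D_i^2$ whether $D_i=O$ or $D_i=J_i$, and $\operatorname{rk}(A^2)=\operatorname{rk}(A)$ because each nonzero $D_i=J_i$ is nonsingular. Hence $A\in\mathbb{C}_1^n$ and $A\stackrel{\sharp}\leq B$; this is exactly the observation recorded in the paragraph preceding the statement.

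For the ``only if'' direction, the first step is to match the nonzero Jordan structure of $B$ with the Jordan structure of the block $\Sigma K$ coming from a Hartwig--Spindelböck decomposition (\ref{hartwigB}) of $B$. I would conjugate $\begin{bmatrix}\Sigma K & \Sigma L\\ O & O\end{bmatrix}$ by $\begin{bmatrix} I_r & -K^{-1}L\\ O & I_{n-r}\end{bmatrix}$ (this is where nonsingularity of $K$ is used) to obtain that it is similar to $\operatorname{diag}(\Sigma K, O)$. Since $B$ is unitarily similar to that block matrix, $B$ is similar to $\operatorname{diag}(\Sigma K, O)$, so the nonzero eigenvalues of $B$, together with their Jordan blocks, are exactly those of $\Sigma K$. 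Consequently the hypothesis that each nonzero eigenvalue of $B$ has a single Jordan block translates into $t_j=1$ for every $j$ in the Jordan canonical form of $\Sigma K$, and the number $s$ of distinct eigenvalues of $\Sigma K$ equals $l$.

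The second step invokes Corollary \ref{sharpBoolean}: with $t_j=1$ for all $j$, the down‑set $[O,B]$ is a Boolean algebra with exactly $2^s = 2^l$ elements. The final step is the counting. The $2^l$ matrices $A=P\operatorname{diag}(D_1,\dots,D_l,O)P^{-1}$ obtained by letting each $D_i$ range independently over $\{O,J_i\}$ are pairwise distinct, because $P$ is invertible and each $J_i\neq O$ occupies a fixed block, and all of them lie in $[O,B]$ by the ``if'' direction. Thus these $2^l$ candidates already exhaust the $2^l$‑element set $[O,B]$, which forces every $A\stackrel{\sharp}\leq B$ of index at most $1$ to be one of them.

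The main obstacle I anticipate is the bookkeeping in the first step: one must ensure that the similarity $M\sim\operatorname{diag}(\Sigma K,O)$ transports the entire Jordan data, not merely the eigenvalues, from $\Sigma K$ to the nonzero part of $B$, so that ``one Jordan block per nonzero eigenvalue of $B$'' is genuinely equivalent to ``$t_j=1$ for all $j$''. Once this correspondence is secured, the argument is purely the cardinality match $|[O,B]|=2^l=\#\{\text{candidates}\}$, with no case analysis; in particular this route avoids handling $A^\sharp$ or verifying $A^2=AB=BA$ in the harder direction.
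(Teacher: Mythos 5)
Your argument is correct, but it is worth pointing out that the paper offers no proof to compare against: Theorem \ref{caracterizacionlibro} is quoted verbatim from \cite[Theorem 4.3.13]{MiBhMa} as a known result, and the authors only supply the easy ``if'' direction in the paragraph preceding the statement. What you have done is give an independent derivation of the cited theorem from the paper's own machinery, and the route is sound. The ``if'' direction is the same routine block computation the paper sketches. For the ``only if'' direction, your similarity $S^{-1}\bigl[\begin{smallmatrix}\Sigma K & \Sigma L\\ O & O\end{smallmatrix}\bigr]S=\operatorname{diag}(\Sigma K,O)$ with $S=\bigl[\begin{smallmatrix}I_r & -K^{-1}L\\ O & I_{n-r}\end{smallmatrix}\bigr]$ checks out (one needs $\Sigma K(K^{-1}L)=\Sigma L$, which holds), so the nonsingular Jordan blocks of $B$ are exactly those of $\Sigma K$ and the hypothesis of one block per nonzero eigenvalue is precisely $t_j=1$ for all $j$, with $s=l$. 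Corollary \ref{sharpBoolean} then gives $|[O,B]|=2^l$; since $O\stackrel{_\sharp}\leq A$ holds for every $A\in\mathbb{C}_1^n$, the set $[O,B]$ is exactly the set of index-$\leq 1$ matrices below $B$, and your $2^l$ pairwise distinct candidates (distinct because each $J_i$ is nonzero and $P$ is invertible) exhaust it. There is no circularity: Corollary \ref{sharpBoolean} rests only on Sections \ref{sec:down-setofsharp} and \ref{sec:deltasharp}, which never invoke Theorem \ref{caracterizacionlibro}. The trade-off is that your proof leans on the full isomorphism chain $[O,B]\simeq\tau\simeq\delta$ and the classification of projectors commuting with $J$, whereas the original source argues directly; in exchange you get the ``only if'' direction with no manipulation of $A^\sharp$ at all. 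The only loose ends are the degenerate cases ($B=O$, where the claim is vacuous, and $r=n$, where $L$ is absent and the similarity step is trivial), which you should mention for completeness.
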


In \cite{MiBhMa} the authors state the following conjecture:
\paragraph{Conjecture:} The conclusion of Theorem 4.3.13 remains valid even when some or all distinct nonzero eigenvalues are of geometric multiplicity exceeding 1.

The conjecture is not true as we can see in the following example. Let $B=I_3$ (with $P=I_3$) and $A=\begin{bmatrix}
           0 & 1  & 0 \\                                                                                                                   
            0  & 1  & 0\\
            0  & 0  & 0
            \end{bmatrix}$. It is easy to see that $A\stackrel{\sharp}\leq B$ but $A\neq I_3$ $\textrm{diag}\left(D_1, D_2, D_3 \right)I_3^{-1} $.

 However, the conjecture is not far from being true since the nonsingular matrix in the Jordan decomposition of $A$ is not necessarily equal to the nonsingular matrix $P$ in the Jordan decomposition of $B$ but the Jordan blocks of $A$ satisfy the conclusion of \cite[Theorem 4.3.13]{MiBhMa}. We can establish the following result which characterizes the matrices that are above a given matrix $A$ in terms of the Jordan decomposition of the matrix $A$ and answer the question that arises behind the conjecture.
 
\begin{lemma}\label{FormaJornalAbove} Let $A$ and $B$ be matrices in $ \mathbb{C}_1^n$ and $A= P \textrm{diag}\left(J_1, \dots, J_t, O \right)P^{-1}$ the Jordan decomposition of $A$, where $J_1, \dots, J_t$ are nonsingular Jordan blocks and $P$ is a nonsingular matrix. If $A\stackrel{\sharp}\leq B$ then $B= P\textrm{diag}\left(J_1, \dots, J_t, X \right)P^{-1}$ where $X$ is a matrix of adequate size.  
\end{lemma}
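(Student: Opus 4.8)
The plan is to pass to the coordinates supplied by the Jordan decomposition of $A$ and to exploit the equivalent formulation of the sharp order, namely that $A\stackrel{\sharp}\leq B$ if and only if $A^2=AB=BA$, as recalled in Section~\ref{sec:preliminares}. This formulation is far more convenient here than the group-inverse definition, since both defining equalities become block identities once we conjugate by $P$.

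First I would write $J_A=\textrm{diag}(J_1,\dots,J_t,O)$, so that $A=PJ_AP^{-1}$, and denote by $J_*=\textrm{diag}(J_1,\dots,J_t)\in\mathbb{C}^{r\times r}$ its nonsingular part, where $r=\rk(A)$. Because $A$ has index at most $1$, the eigenvalue $0$ contributes only $1\times1$ Jordan blocks, so the trailing block is a genuine zero matrix of size $(n-r)\times(n-r)$ and $J_A=\begin{bmatrix} J_* & O \\ O & O\end{bmatrix}$. Setting $\widehat B=P^{-1}BP$ and conjugating the identities $A^2=AB$ and $A^2=BA$ by $P$, the hypothesis $A\stackrel{\sharp}\leq B$ becomes $J_A^2=J_A\widehat B=\widehat B J_A$.

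Next I would partition $\widehat B=\begin{bmatrix} B_{11} & B_{12} \\ B_{21} & B_{22}\end{bmatrix}$ conformally with the splitting $n=r+(n-r)$. Expanding $J_A^2=J_A\widehat B$ in block form yields $J_*^2=J_*B_{11}$ together with $J_*B_{12}=O$, while $J_A^2=\widehat B J_A$ yields $J_*^2=B_{11}J_*$ together with $B_{21}J_*=O$. Since $J_*$ is nonsingular, cancelling it in these relations forces $B_{11}=J_*$, $B_{12}=O$ and $B_{21}=O$, so that $\widehat B=\textrm{diag}(J_*,B_{22})$. Renaming $X=B_{22}$ and conjugating back by $P$ gives $B=P\,\textrm{diag}(J_1,\dots,J_t,X)P^{-1}$, as claimed.

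The argument is essentially a single block computation, so I do not expect a serious obstacle; the only point that genuinely must be handled correctly is the reduction to the right coordinates and the bookkeeping of the two conditions. It is the nonsingularity of $J_*$ that does all the work, since it is exactly what permits cancellation to pin down $B_{11}$, $B_{12}$ and $B_{21}$ while leaving the lower-right block $X=B_{22}$ entirely unconstrained, which is why the conclusion asserts nothing about $X$ beyond its size.
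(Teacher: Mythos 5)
Your proposal is correct and takes essentially the same approach as the paper: conjugate by $P$, use the formulation $A^2=AB=BA$, expand in block form, and cancel the nonsingular Jordan part to pin down every block of $B$ except the lower-right one. The only (cosmetic) difference is that you use a coarser $2\times 2$ partition into the nonsingular part $J_*$ versus the zero part, whereas the paper partitions conformally into the individual blocks $J_1,\dots,J_t,O$.
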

\begin{proof} Let $B= P\begin{bmatrix}
     X_{11} &   \cdots  & X_{1t} & X_{1(t+1)}\\                                                                                  
     \vdots &  \ddots &\vdots  &\vdots \\                                                                                                       
     X_{t1} &   \cdots & X_{tt} & X_{t(t+1)}\\
     X_{(t+1)1} & \cdots & X_{(t+1)t} & X_{(t+1)(t+1)}\\
      \end{bmatrix} 
P^{-1}$ for $X_{ij}$ being matrices of adequate size. If $A\stackrel{\sharp}\leq B$ then, in particular, $A^2=AB$. So, 
\[\begin{bmatrix} J_1^2        & \cdots  & O & O\\                                                                                                                   
                                                                                                                       
     \vdots &   \ddots &\vdots  &\vdots \\                                                                                                                   
     O &   \cdots &  J_t^2 &  O\\
     O &   \cdots & O & O \\
     \end{bmatrix}
    = \begin{bmatrix}
    J_1 X_{11}  & \cdots  & J_1 X_{1t} & J_1 X_{1(t+1)}\\                                                                                                                   
                                                                                                                       
       \vdots & \ddots &\vdots  &\vdots \\                                                                                                                   
    J_t X_{t1}  & \cdots & J_t X_{tt} &  J_r X_{t(t+1)}\\
       O & \cdots & O & O \\
      \end{bmatrix}. \]
Since $J_i$ are nonsingular, we have that $X_{ii}= J_i$, for $i\in \{1, \cdots, t\}$ and $X_{ij}=O$, for $i\neq j$, $j\in \{1, \cdots, t+1\}$ and $i\in \{1, \cdots, t\}$. That is, $B= P\begin{bmatrix}
     J_1        & \cdots  & O & O\\                                                                                                                   
                                                                                                            
       \vdots & \ddots &\vdots  &\vdots \\                                                                                                                   
     O &   \cdots &  J_t &  O\\     
     X_{(t+1)1} &   \cdots & X_{(t+1)t} & X_{(t+1)(t+1)}\\
      \end{bmatrix}
P^{-1}$. Now from $A^2= BA$, we obtain that 
\[\begin{bmatrix} J_1^2        & \cdots  & O & O\\                                                                                                                   
                                                                                                              
      \vdots & \ddots &\vdots  &\vdots \\                                                                                                                   
      O & \cdots &  J_t^2 &  O\\
      O & \cdots & O & O \\
     \end{bmatrix}
    =\begin{bmatrix}
     J_1^2        & \cdots  & O & O\\                                                                                                                   
                                                                                                                       
       \vdots & \ddots &\vdots  &\vdots \\                                                                                                                   
       O & \cdots &  J_t^2 &  O\\     
     X_{(t+1)1} J_1 &   \cdots & X_{(t+1)t}J_t & O\\
      \end{bmatrix}. \]
      Thus $X_{(t+1) j}=O$ for every $j\in \{1, \cdots, t\}$, from where we clearly get the result. 
\end{proof}

\begin{corollary}\label{bloques}Let $A$ and $B$ be matrices in $ \mathbb{C}_1^n$ and $A= P \textrm{diag}\left(J_1, \dots, J_t, O \right)P^{-1}$ the Jordan decomposition of $A$, where $J_1, \dots, J_t$ are nonsingular Jordan blocks and $P$ is a nonsingular matrix. If $A\stackrel{\sharp}\leq B$ then all the nonsingular Jordan blocks of $A$ are Jordan blocks of $B$. 
\end{corollary}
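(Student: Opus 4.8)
The plan is to deduce this immediately from Lemma \ref{FormaJornalAbove}, reducing the whole assertion to the uniqueness of the Jordan canonical form. First I would invoke Lemma \ref{FormaJornalAbove}, which under the hypothesis $A\stackrel{\sharp}\leq B$ gives
$$B = P\,\textrm{diag}\left(J_1, \dots, J_t, X\right)P^{-1}$$
for some square matrix $X$ of the appropriate size. The essential content is that this already exhibits $B$ as similar to a block-diagonal matrix whose first $t$ diagonal blocks are exactly the nonsingular Jordan blocks of $A$; only the last block $X$ is, a priori, in no special form.

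Next I would put that last block into Jordan form. Writing $X = Q\,J_X\,Q^{-1}$ with $J_X$ in Jordan canonical form and $Q$ nonsingular, and conjugating by the block-diagonal nonsingular matrix $\widetilde{P} = P\,\textrm{diag}(I, Q)$, where $I$ has the combined size of $J_1,\dots,J_t$, I obtain
$$B = \widetilde{P}\,\textrm{diag}\left(J_1, \dots, J_t, J_X\right)\widetilde{P}^{-1}.$$
The matrix $\textrm{diag}(J_1, \dots, J_t, J_X)$ is block diagonal with every diagonal block a Jordan block, hence it is itself in Jordan canonical form. Thus it is \emph{a} Jordan canonical form of $B$.

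Finally I would appeal to the uniqueness (up to reordering of blocks) of the Jordan canonical form: since $\textrm{diag}(J_1, \dots, J_t, J_X)$ is a Jordan canonical form of $B$, the multiset of Jordan blocks of $B$ is precisely $\{J_1, \dots, J_t\}$ together with the Jordan blocks of $J_X$. In particular each $J_i$, for $1\leq i\leq t$, occurs as a Jordan block of $B$, which is exactly the claim. I expect no genuine obstacle here, as the corollary is essentially a repackaging of Lemma \ref{FormaJornalAbove}; the only point worth flagging is that the eigenvalues of $X$ may coincide with some of those of the $J_i$. This causes no trouble, because uniqueness concerns the entire multiset of blocks, so the blocks $J_1,\dots,J_t$ appear verbatim in the Jordan form of $B$ regardless of any overlap of eigenvalues between $X$ and the nonsingular part.
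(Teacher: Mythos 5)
Your proof is correct and follows essentially the same route as the paper, which states this corollary without further argument as an immediate consequence of Lemma \ref{FormaJornalAbove}; your step of Jordanizing the block $X$ and invoking the uniqueness of the Jordan canonical form (as a multiset of blocks) is exactly the detail being left implicit there. The remark about possible eigenvalue overlap between $X$ and the $J_i$ is handled correctly.
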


The converse is not true. For example, if $B=
\begin{bmatrix}
            1 & 1  & 0 \\                                                                                                                   
            0  & 1  & 0\\
             0  & 0  & 1
             \end{bmatrix}$ and $A= P\begin{bmatrix}
            1 & 1  & 0 \\                                                                                                                   
            0  & 1  & 0\\
             0  & 0  & 0
             \end{bmatrix}P^{-1}$, where $P= \begin{bmatrix}
            1 & 1  & 1 \\                                                                                                                   
            0  & 1  & 3\\
             0  & 0  & 1
             \end{bmatrix}$, then it is easy to see that $A$ is not a predecesor of $B$. 
						
Let us observe that the maximal elements of $\left( \mathbb{C}_1^n, \stackrel{_{\sharp}}\leq  \right)$ are nonsingular matrices. Indeed, let $B$ be decomposed as in (\ref{hartwigB}). If we consider the nonsingular matrix $C = U \begin{bmatrix}
   \Sigma K  &   (\Sigma  - K^{-1} ) L  \\
   O & I_{n-r} \\
	\end{bmatrix} U^*$, we have that $B\stackrel{_{\sharp}}\leq C$. 
 \begin{remark}\label{remarklongitudcadena2} The maximum length of any subchain in $\left( \mathbb{C}_1^n, \stackrel{_{\sharp}}\leq  \right)$ is $l+1$ where $1\leq l \leq n$. Indeed, let $B$ be a nonsingular matrix, $B= P \textrm{diag}\left(J_1, \dots, J_l \right)P^{-1}$ be the Jordan decomposition of $B$.  By Corollary \ref{bloques}, the sharp order does not ``split'' the Jordan blocks. So, the length of any chain of matrices that are below $B$ has at most $l+1$ elements. Moreover, if we consider the matrices $A_i=P \textrm{diag}\left(F_1, \dots, F_l \right)P^{-1}\in \mathbb{C}_1^{n}$ where $F_k=\begin{cases}
        J_k & \text{ if } k\leq i \\                                                                                                                                               
        O & \text{ otherwise}  
		\end{cases}
$, for each $i\in \{1,\dots,l\}$, then we obtain a chain 
\[O \stackrel{\sharp}\leq  A_1 \stackrel{\sharp}\leq \dots \stackrel{\sharp}\leq A_l=B,\] 
with $l+1$ elements of maximum length.
\end{remark}

In \cite{Mi} Mitra showed that $\left( \mathbb{C}_1^n, \stackrel{_{\sharp}}\leq  \right)$ with $n\geq 4$ is not a lower semilattice. As a consequence of Theorem \ref{teo:tj1o2}, we can extend this result to $n=3$. Moreover, we have the following theorem.

\begin{theorem} The poset $\left( \mathbb{C}_1^n, \stackrel{_{\sharp}}\leq  \right)$ is a lower semilattice if and only if $n\leq 2$. 
\end{theorem}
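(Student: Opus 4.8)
The plan is to prove the two implications separately, reducing the harder ``$n\ge 3$'' direction to the down-set machinery already developed. The key elementary observation is that for two matrices $A_1,A_2\in\mathbb{C}_1^n$ sharing a common upper bound $B$ (i.e. $A_1,A_2\in[O,B]$), the set of their lower bounds in the whole poset coincides with their set of lower bounds inside $[O,B]$: if $C\stackrel{\sharp}\leq A_1\stackrel{\sharp}\leq B$ then transitivity gives $C\in[O,B]$, and $O\stackrel{\sharp}\leq C$ always holds since $O=OC=CO$. Consequently $A_1\wedge A_2$ exists in $\mathbb{C}_1^n$ if and only if it exists in $[O,B]$. This transports any failure of infima recorded inside a down-set directly up to $\mathbb{C}_1^n$.

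For ``$n\ge 3\Rightarrow$ not a lower semilattice'' I would take a nonsingular $B$ whose $\Sigma K$ has an eigenvalue of geometric multiplicity at least $3$; concretely $B=I_n$ works, since then one may take $U=I_n$, $\Sigma K=I_n$, whose Jordan form has $t_1=n\ge 3$ blocks for the eigenvalue $1$. By Theorem \ref{teo:tj1o2}, $[O,B]$ is then not a lattice, and, more precisely, the construction in its proof produces projectors $T_3,T_4\in\delta$ (hence, through $\phi^{-1}\circ\psi$, matrices $A_3,A_4\in[O,B]$) whose infimum does not exist in $[O,B]$. By the observation above, $A_3\wedge A_4$ fails to exist in $\mathbb{C}_1^n$ as well, so $\mathbb{C}_1^n$ is not a lower semilattice. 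This recovers Mitra's $n\ge 4$ result and adjoins the case $n=3$.

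For ``$n\le 2\Rightarrow$ lower semilattice'' the case $n=1$ is immediate: $\mathbb{C}_1^1=\mathbb{C}$ with $a\stackrel{\sharp}\leq b$ iff $a=0$ or $a=b$, so $0$ is least, all nonzero scalars are pairwise incomparable, and meets exist trivially. For $n=2$ I would fix $A,B\in\mathbb{C}_1^2$ and show $A\wedge B$ exists. If $A=B$ there is nothing to do. If $A\ne B$, then by Remark \ref{remarklongitudcadena} every common lower bound $C$ satisfies $\operatorname{rk}(C)\le 1$: a bound with $\operatorname{rk}(C)=2$ would force $\operatorname{rk}(C)=\operatorname{rk}(A)=\operatorname{rk}(B)=2$ and hence $C=A=B$, a contradiction. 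Since $O$ is always a lower bound and distinct rank-$1$ matrices are pairwise incomparable, it suffices to prove that \emph{there is at most one} rank-$1$ matrix below both $A$ and $B$: then either that unique matrix is the greatest lower bound, or else $O$ is.

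To establish this uniqueness --- which I expect to be the delicate step --- I would write a rank-$1$ common lower bound as $C=\mu Q$ with $\mu=\operatorname{tr}(C)\ne 0$ and $Q$ a rank-$1$ idempotent. Unwinding $C^2=CA=AC$ and $C^2=CB=BC$ gives $AQ=QA=\mu Q$ and $BQ=QB=\mu Q$; subtracting the two yields $(A-B)Q=O$ and $Q(A-B)=O$, that is $\mathcal{R}(Q)\subseteq\ker(A-B)$ and $\mathcal{R}(A-B)\subseteq\ker Q$. Because $A\ne B$ forces $A-B\ne O$, and because $\mathcal{R}(Q)$ and $\ker Q$ are one-dimensional, these inclusions force $\dim\ker(A-B)=1$ with $\mathcal{R}(Q)=\ker(A-B)$ and $\ker Q=\mathcal{R}(A-B)$. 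Hence $Q$ --- and with it $\mu$, the eigenvalue of $A$ on the line $\ker(A-B)$, and therefore $C=\mu Q$ --- is uniquely determined whenever it exists. This settles $n=2$ and completes the equivalence; the main obstacle is the bookkeeping in this $2\times2$ analysis rather than the reduction, which is essentially a corollary of Theorem \ref{teo:tj1o2}.
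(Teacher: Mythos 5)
Your proof is correct, and while the ``$n\ge 3$'' direction follows the paper's route, your ``$n=2$'' direction is genuinely different. For $n\ge 3$ the paper likewise invokes the $T_3,T_4$ construction from the proof of Theorem \ref{teo:tj1o2} and asserts that the failure of the infimum passes from $[O,B]$ to $\mathbb{C}_1^n$; your explicit observation that the common lower bounds of two elements of $[O,B]$ computed in $\mathbb{C}_1^n$ all lie in $[O,B]$ (by transitivity through $B$) is exactly the justification the paper leaves implicit, and your concrete witness $B=I_n$ is a clean instance of the paper's ``geometric multiplicity $\ge 3$'' hypothesis. For $n=2$, however, the paper argues through the Jordan canonical form of the upper bounds: it assumes two distinct rank-one common lower bounds $A_1,A_2$ of $B_1,B_2$, and runs a case analysis on the eigenvalues and Jordan structure of $B_1$ (one Jordan block, two distinct eigenvalues, or $\lambda I_2$), using Theorem \ref{caracterizacionlibro} and Lemma \ref{FormaJornalAbove} to reach $B_1=B_2$ in every case. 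Your argument instead writes a rank-one lower bound as $C=\mu Q$ with $Q$ idempotent, derives $Q(A-B)=(A-B)Q=O$, and pins down $Q$ by $\mathcal{R}(Q)=\ker(A-B)$ and $\ker Q=\mathcal{R}(A-B)$, with $\mu$ the eigenvalue of $A$ on that kernel line. This is coordinate-free, avoids the Jordan case split entirely, handles all ranks of $A$ and $B$ uniformly (the paper treats rank $\le 1$ separately), and as a by-product exhibits necessary conditions for a nonzero meet to exist; what it does not do is connect the $n=2$ case to the Jordan-theoretic machinery of Sections 5--6, which is the thematic point of the paper's version. Both are valid; yours is arguably the more economical proof of this particular statement.
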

\begin{proof} If $n\geq 3$, then $\left( \mathbb{C}_1^n, \stackrel{_{\sharp}}\leq  \right)$ is not a lower semilattice. To see this it is enough to take a matrix $B$ where $\Sigma K$ has one eigenvalue such that its geometric multiplicity is greater than or equal to 3 and construct $T_3$ and $T_4$ as in the proof of the Theorem \ref{teo:tj1o2}. Then $T_3\wedge T_4$ does not exist in $[O, B]$ and also it is clear that $T_3\wedge T_4$ does not exist in $\mathbb{C}_1^n$. 

It is trivial that $\left( \mathbb{C}_1^1, \stackrel{_{\sharp}}\leq  \right)$ is a lower semilattice.

Let us prove that the poset $\left( \mathbb{C}_1^2, \stackrel{_{\sharp}}\leq  \right)$ is a lower semilattice. In order to do that, let $B_1,B_2\in \mathbb{C}_1^2$ where $B_1\neq B_2$. If the rank of any of them is 1 or 0 then $B_1\wedge B_2$ clearly exists in $\mathbb{C}_1^2$. Let us suppose that $B_1$ and $B_2$ have both rank equal to 2, that is, $B_1$ and $B_2$ are maximal in the set $\mathbb{C}_1^2$. Since $O$ is the least element in $\mathbb{C}_1^2$, the set of lower bounds of $\{B_1, B_2\}$ is nonempty. If $O$ is the only element of this set, then $B_1\wedge B_2$ exists and $B_1\wedge B_2=O$. We want to show that if there exists a matrix $A\neq O$ in the set of lower bounds then it is unique. In such a case, $A=B_1\wedge B_2$. 

We will argue by contradiction. Let $A_1, A_2$ be two distinct matrices of rank 1 which are both lower bounds of $\{B_1, B_2\}$. Let $\lambda_1$ and $\lambda_2$ be the unique nonzero eigenvalue of $A_1$ and $A_2$ respectively. If $\lambda_1\neq \lambda_2$ then $\lambda_1$ and $\lambda_2$ must be the eigenvalues of $B_1$ and $B_2$. Then $B_1= B_2$ by Lemma \ref{FormaJornalAbove}. Let us suppose now that $\lambda_1=\lambda_2=\lambda$. Then $\lambda $ must be an eigenvalue of $B_1$ and $B_2$. Neither $B_1$ nor $B_2$ can have a unique Jordan block. In fact, if $B_1= Q\begin{bmatrix}                                                                                                                                                                                                     \lambda & 1 \\
 0 & \lambda                                                                                                                                                                                                   \end{bmatrix}Q^{-1}$ and since $A_1\stackrel{\sharp}\leq B_1$, $A_1\neq B_1$, then by Theorem \ref{caracterizacionlibro} we obtain that $A_1=O$, which is a contradiction. 

Suppose now that $B_1= Q\begin{bmatrix}                                                                                                                                                                                                     \lambda & 0 \\
 0 & \mu                                                                                                                                                                                                   \end{bmatrix}Q^{-1}$ where $\lambda\neq \mu$. By Theorem \ref{caracterizacionlibro}, $A_1= Q\begin{bmatrix}                                                                                                                                                                                                     \lambda & 0 \\
 0 & 0                                                                                                                                                                                                   \end{bmatrix}Q^{-1}$ and $A_2= Q\begin{bmatrix}                                                                                                                                                                                                     0 & 0 \\
 0 & \mu                                                                                                                                                                                                   \end{bmatrix}Q^{-1}$, or vice versa. Now, by Lemma \ref{FormaJornalAbove}, it follows that $B_1$ is the unique upper bound of $\{A_1, A_2\}$ and then $B_1= B_2$, which is a contradiction. 
 
 So, $B_1= Q\begin{bmatrix}                                                                                                                                                                                                     \lambda & 0 \\
 0 & \lambda                                                                                                                                                                                                   \end{bmatrix}Q^{-1}= \lambda I_2= B_2$ and we have again a contradiction. \qedhere
 \end{proof}

\section{Solutions of some matrix equations}\label{sec:matrixeq}

In this section we give a characterization of solutions of matrix systems via the posets $\tau$ and $\delta$.

Let us recall that a matrix $B\in \mathbb{C}^{n\times n}$ is an EP-matrix if $\mathcal{R}(B)= \mathcal{R}(B^*)$. 
 
\begin{lemma}\label{lemma:sistemas} Let $B\in \mathbb{C}_1^{n}$ be an EP-matrix written as in (\ref{hartwigB}). Then, a matrix $S$ is a solution of the system  $\begin{cases}
                                         BX=XB \\
                                         X^2= X
                                        \end{cases}
$  if and only if $S=U\textrm{diag}(T, W)U^*$ where $T\in\tau $,  $T\in \Crr$, and $W\in \mathbb{C}^{(n-r)\times (n-r)}$ is a projector.\label{sistema1} 
\end{lemma}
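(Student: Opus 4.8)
The key observation is that $B$ being an EP-matrix forces the block $L$ in the Hartwig-Spindelböck decomposition~(\ref{hartwigB}) to vanish. Indeed, since $\mathcal{R}(B)=\mathcal{R}(B^*)$, the decomposition simplifies to $B=U\,\textrm{diag}(\Sigma K, O)\,U^*$ with $L=O$; this is the standard characterization of EP-matrices via~(\ref{hartwigB}), and it is exactly what makes the off-diagonal coupling disappear. The plan is to write an arbitrary $S$ with $S^2=S$ in the unitary coordinates of $U$ as a full $2\times 2$ block matrix and use the two equations to strip away the off-diagonal blocks, leaving a block-diagonal projector whose top-left block lands in $\tau$.

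First I would set $S=U\,M\,U^*$ where $M=\begin{bmatrix} M_{11} & M_{12}\\ M_{21} & M_{22}\end{bmatrix}$ is partitioned conformally with $\textrm{diag}(\Sigma K,O)$, so $M_{11}\in\Crr$. The system is unitarily invariant, so $BS=SB$ becomes $\widehat{B}M=M\widehat{B}$ and $S^2=S$ becomes $M^2=M$, where $\widehat{B}=\textrm{diag}(\Sigma K,O)$. Writing out $\widehat{B}M=M\widehat{B}$ blockwise gives $\Sigma K\,M_{11}=M_{11}\,\Sigma K$, $\Sigma K\,M_{12}=O$, $M_{21}\,\Sigma K=O$, and the trivial identity in the bottom-right corner. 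Since $\Sigma K$ is nonsingular (because $B\in\mathbb{C}_1^n$ forces $K$ invertible), the middle two equations immediately yield $M_{12}=O$ and $M_{21}=O$. Thus $M=\textrm{diag}(M_{11},M_{22})$ with $\Sigma K\,M_{11}=M_{11}\,\Sigma K$.

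With $M$ now block-diagonal, the idempotency $M^2=M$ decouples into $M_{11}^2=M_{11}$ and $M_{22}^2=M_{22}$. Setting $T:=M_{11}$, we have $T^2=T$ and $T\Sigma K=\Sigma K\,T$, which is precisely the condition $T\in\tau$; and $W:=M_{22}\in\mathbb{C}^{(n-r)\times(n-r)}$ is a projector. This proves that every solution $S$ has the claimed form $S=U\,\textrm{diag}(T,W)\,U^*$. The converse is routine: given any $T\in\tau$ and any projector $W$, the matrix $S=U\,\textrm{diag}(T,W)\,U^*$ satisfies $S^2=S$ blockwise, and $BS=SB$ reduces to the single relation $\Sigma K\,T=T\,\Sigma K$, which holds by definition of $\tau$ (the bottom-right block contributes $O\cdot W=W\cdot O=O$ on both sides).

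The only genuinely substantive step is the reduction $L=O$ from the EP hypothesis; everything after that is block-matrix bookkeeping exploiting the nonsingularity of $\Sigma K$. I would therefore state the EP-simplification of~(\ref{hartwigB}) explicitly (or cite it) at the outset, since the whole clean block-diagonal conclusion hinges on it — if $L\neq O$, the commutation equation $BS=SB$ would couple $M_{11}$ and $M_{22}$ through $\Sigma L$ and the solution set would no longer split as a direct product of $\tau$ and the projectors of size $n-r$.
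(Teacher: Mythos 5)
Your proposal is correct and follows essentially the same route as the paper's proof: reduce to $L=O$ via the EP hypothesis, write $S$ in the unitary coordinates of $U$ as a full $2\times 2$ block matrix, use the nonsingularity of $\Sigma K$ in the commutation equation to annihilate the off-diagonal blocks, and then let idempotency decouple into the conditions $T\in\tau$ and $W^2=W$. No gaps; the paper's argument is the same block-matrix bookkeeping.
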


\begin{proof} Since $B$ is an EP-matrix, then $L=O$ when $B$ is written as in (\ref{hartwigB}). Let us write $S=U \begin{bmatrix}
      T &  V \\
      Z & W \\
      \end{bmatrix}U^*$ where $T\in \Crr$. Let $S$ be a solution of the system. If $BS=SB$ then $\begin{bmatrix}
     \Sigma K T &  \Sigma K V \\
     O & O \\
     \end{bmatrix} = \begin{bmatrix}
     T \Sigma K  &  O \\
     Z \Sigma K & O \\
     \end{bmatrix} $, so $\Sigma K T= T \Sigma K$, $Z \Sigma K=O$ and  $\Sigma K V=O$; hence $Z=O$ and $V=O$. If $S^2=S$, then $\begin{bmatrix}
     T^2  &  O \\
     O & W^2 \\ \end{bmatrix}=\begin{bmatrix}
     T &  O \\
     O & W\\     
     \end{bmatrix} $. Thus, $T\in\tau $ and $W^2=W$. The converse is straightforward. 
\end{proof}

\begin{corollary} \label{cor:projectorescommutan}
Let $B\in \mathbb{C}_1^{n}$ be written as in (\ref{hartwigB}) and consider the Jordan canonical form $J$ of $\Sigma K$, such that $t_j=1$ in (\ref{Jordan Form 2}) for all $j\in \{1,\dots, s\}$.
\begin{enumerate}[(a)]
	\item\label{coridemcommutan} If $B$ is a nonsingular matrix, then there exist $2^s$ projectors that commute with $B$.
   \item If $B$ is an EP-matrix such that $\rk (B)=n-1$, then the system $\begin{cases}
                                         BX=XB \\
                                         X^2= X
                                        \end{cases}$
has $2^{s+1}$ solutions. 
\end{enumerate}
\end{corollary}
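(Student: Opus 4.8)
The plan is to treat both items as instances of the idempotent commuting system already analyzed in Lemma \ref{lemma:sistemas}, and then simply count the solutions using the cardinality of $\tau$ supplied by Corollary \ref{sharpBoolean}. The whole corollary is really a counting exercise: once the solution set of $\{BX=XB,\ X^2=X\}$ is identified with $\tau$ (possibly enlarged by the choices for a small lower-right block), the number $2^s$ produced by the hypothesis $t_j=1$ for all $j$ does the rest.

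For item (a), I would first observe that the projectors that commute with $B$ are precisely the solutions $X$ of the system $\{BX=XB,\ X^2=X\}$, since ``projector'' means $X^2=X$ and ``commutes with $B$'' means $BX=XB$. A nonsingular matrix is in particular an EP-matrix, because $\mathcal{R}(B)=\mathbb{C}^n=\mathcal{R}(B^*)$, so Lemma \ref{lemma:sistemas} applies. Moreover $r=\rk(B)=n$, hence $n-r=0$ and the block $W$ is absent; the lemma then describes every solution as $S=UTU^*$ with $T\in\tau$, and conversely each such $T$ yields a solution. Because $U$ is invertible, the map $T\mapsto UTU^*$ is a bijection, so the number of projectors commuting with $B$ equals $|\tau|$. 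Finally, the hypothesis that $J$ has a single Jordan block per eigenvalue ($t_j=1$ for all $j$) lets me invoke Corollary \ref{sharpBoolean}, which gives $|\tau|=2^s$, establishing the claim.

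For item (b), the reasoning is the same but now $n-r=n-(n-1)=1$. Since $B$ is assumed EP, Lemma \ref{lemma:sistemas} applies verbatim and identifies the solution set with $\{U\,\textrm{diag}(T,W)\,U^* : T\in\tau,\ W\in\mathbb{C}^{1\times 1},\ W^2=W\}$. A $1\times 1$ idempotent is either $0$ or $1$, so there are exactly two admissible choices for $W$, while $|\tau|=2^s$ again by Corollary \ref{sharpBoolean}. Distinct pairs $(T,W)$ produce distinct matrices $S$, since $U$ is invertible and the block positions are fixed, so the parametrization is injective and the total number of solutions is $2^s\cdot 2=2^{s+1}$.

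I do not anticipate any genuine obstacle here; the argument is essentially bookkeeping on top of Lemma \ref{lemma:sistemas} and Corollary \ref{sharpBoolean}. The only points meriting a line of care are noting that a nonsingular matrix is EP (so that the lemma is legitimately applicable in (a), or else redoing the trivial $r=n$ direct computation), and checking that $(T,W)\mapsto U\,\textrm{diag}(T,W)\,U^*$ is injective so that the counts $2^s$ and $2^{s+1}$ really reflect distinct solutions.
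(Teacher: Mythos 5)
Your proposal is correct and follows essentially the same route as the paper: both proofs reduce the count to $|\tau|=2^s$ via Corollary \ref{sharpBoolean} and apply Lemma \ref{lemma:sistemas} to parametrize the solutions, with the extra $1\times 1$ idempotent block $a\in\{0,1\}$ accounting for the factor of $2$ in part (b). Your added remarks (nonsingular implies EP, injectivity of $(T,W)\mapsto U\,\mathrm{diag}(T,W)\,U^*$) are details the paper leaves implicit.
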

\begin{proof}By Corollary \ref{sharpBoolean}, $\tau$ has $2^s$ elements.
 \begin{enumerate}[(a)]
\item If $B$ is a nonsingular matrix, then it is an EP-matrix and by Lemma \ref{lemma:sistemas} the projectors that commute with $B$ are $S=UTU^*$, 
with $T\in\tau$. Then (a) holds. 

\item If $\rk (B)=n-1$, by Lemma \ref{lemma:sistemas}, we have that the projectors $S$ that commute with $B$ are $S=U \begin{bmatrix}
     T &  O \\
     O & a \\
     \end{bmatrix}U^*$, with $T\in\tau$ and $a\in \{0, 1\}$. So, the system has $2^{s+1}$ solutions. \qedhere
\end{enumerate}
\end{proof}

\begin{lemma} Let $B\in  \mathbb{C}_1^{n}$ be an EP-matrix written as in (\ref{hartwigB}) and $S=U \textrm{diag}(T, W)U^*$ where $T\in \Crr$,  $T\in\tau $ and $W\in \mathbb{C}^{(n-r)\times (n-r)}$ is a projector, then $S$ is a solution of $\begin{cases}
                                         XB^k=B^kX \\
                                         X^2= X
                                        \end{cases}$, for each $k$ positive integer. 
\end{lemma}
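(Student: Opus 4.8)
The plan is to use the EP hypothesis to collapse everything onto the nonsingular block $\Sigma K$, and then to observe that membership in $\tau$ already encodes commutation with $\Sigma K$, which propagates to all of its powers. Since $B$ is an EP-matrix, the block $L$ vanishes in decomposition (\ref{hartwigB}) (exactly as in the proof of Lemma \ref{lemma:sistemas}), so that $B=U\,\textrm{diag}(\Sigma K,O)\,U^*$. Because $U$ is unitary and the middle factor is block diagonal, a one-line induction using $U^*U=I_n$ gives $B^k=U\,\textrm{diag}\big((\Sigma K)^k,\,O\big)\,U^*$ for every positive integer $k$.

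For the idempotency equation, writing $S=U\,\textrm{diag}(T,W)\,U^*$ and squaring yields $S^2=U\,\textrm{diag}(T^2,W^2)\,U^*$. Since $T\in\tau$ forces $T^2=T$, and $W$ is a projector so that $W^2=W$, we conclude $S^2=S$. For the commutation equations, I would compute both products blockwise to obtain
\[
SB^k=U\,\textrm{diag}\big(T(\Sigma K)^k,\,O\big)\,U^*
\qquad\text{and}\qquad
B^kS=U\,\textrm{diag}\big((\Sigma K)^k T,\,O\big)\,U^*,
\]
so that $SB^k=B^kS$ is equivalent to $T(\Sigma K)^k=(\Sigma K)^k T$. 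This is the only substantive step: from $T\in\tau$ we have $T\Sigma K=\Sigma K T$, and an easy induction on $k$ then shows $T(\Sigma K)^k=(\Sigma K)^k T$ for every $k\geq 1$.

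Rather than a genuine obstacle, the point requiring care is the role of the EP assumption, which is precisely what keeps the $(2,2)$ block of $B$ (and hence of every $B^k$) equal to $O$, preventing cross terms coming from $L$. This is why no commutation constraint is needed on $W$, and only its idempotency is required. With $T\Sigma K=\Sigma K T$ built into the definition of $\tau$, commutation with every power of $\Sigma K$ is automatic, and both equations of the system hold for all positive integers $k$ simultaneously.
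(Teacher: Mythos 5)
Your proof is correct and follows the same route as the paper's (very terse) argument: the EP hypothesis forces $L=O$, so everything reduces to the $(1,1)$ block, where $T\Sigma K=\Sigma K T$ propagates by induction to $T(\Sigma K)^k=(\Sigma K)^k T$, and idempotency of $S$ follows from that of $T$ and $W$. Nothing is missing; you have simply written out the block computations the paper leaves implicit.
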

\begin{proof} It follows easily by using the definition of $\tau$ and taking into account that $T\Sigma K= \Sigma K T$ implies $T(\Sigma K)^k= (\Sigma K)^k T$, for each $k$.
\end{proof}

Next we give a result where $B$ is not necessarily an EP-matrix. Its proof is also straightforward as in Lemma \ref{lemma:sistemas} by using the definition of $\tau$. 

\begin{lemma} Let $B\in \mathbb{C}_1^{n}$ be a matrix written as in (\ref{hartwigB}) and $S=U \textrm{diag}(T, O)U^*$ where $T\in\tau $. Then $S $ is a solution of the system $\begin{cases}
                                         XBX=BX \\
                                         X^2= X
                                        \end{cases}
$.
\end{lemma}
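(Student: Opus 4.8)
The plan is to verify directly that the proposed matrix $S = U\,\textrm{diag}(T, O)\,U^*$, with $T \in \tau$, satisfies both equations of the system. Since both conditions are equalities of matrices, it suffices to compute the relevant products in the $2\times 2$ block form dictated by the Hartwig-Spindelböck decomposition and compare the blocks. The key computational ingredients I would assemble first are the block expressions for $B$ and $S$ in terms of $U$, namely
\begin{equation*}
B = U \begin{bmatrix} \Sigma K & \Sigma L \\ O & O \end{bmatrix} U^*
\quad\text{and}\quad
S = U \begin{bmatrix} T & O \\ O & O \end{bmatrix} U^*,
\end{equation*}
recalling that $U$ is unitary so that $U^*U = I_n$ and all cross terms collapse when multiplying.

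First I would compute $S^2$: since $S = U\,\textrm{diag}(T,O)\,U^*$ and $U^*U = I_n$, we get $S^2 = U\,\textrm{diag}(T^2, O)\,U^* = U\,\textrm{diag}(T,O)\,U^* = S$, using only that $T$ is idempotent (which holds because $T \in \tau$). This disposes of the second equation immediately. Second, I would compute the two products $XBX$ and $BX$ with $X = S$. Multiplying the block forms gives
\begin{equation*}
SB = U \begin{bmatrix} T & O \\ O & O \end{bmatrix}\begin{bmatrix} \Sigma K & \Sigma L \\ O & O \end{bmatrix} U^* = U \begin{bmatrix} T\Sigma K & T\Sigma L \\ O & O \end{bmatrix} U^*,
\end{equation*}
and then $BS = U\,\textrm{diag}(\Sigma K\, T, O)$-type block with the $(1,2)$ entry vanishing, so that $SBS = U\,\textrm{diag}((T\Sigma K)T,\ \ast)U^*$. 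The crucial simplification is that the $(1,1)$ block of both $SBS$ and $BS$ reduces to the same matrix: using $T\Sigma K = \Sigma K T$ and $T^2 = T$, the top-left block of $SBS$ becomes $T\Sigma K T = \Sigma K T^2 = \Sigma K T$, while the top-left block of $BS$ is $\Sigma K T$, and the remaining blocks are zero in both. Hence $SBS = BS$.

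This proof is almost entirely routine block multiplication, so I do not anticipate a genuine obstacle; the only point requiring care is the bookkeeping of the $(1,2)$ off-diagonal blocks (those involving $\Sigma L$), which must be tracked to confirm they coincide or vanish on both sides. The commutativity relation $T\Sigma K = \Sigma K T$ built into the definition of $\tau$, together with idempotency $T^2 = T$, is exactly what makes the nonzero blocks of $XBX$ and $BX$ agree, so I would make sure to invoke both properties explicitly rather than treating the verification as purely formal. As the statement itself notes, the argument parallels the computation in Lemma \ref{lemma:sistemas}, so I would phrase it concisely and refer back to that lemma for the block-multiplication technique.
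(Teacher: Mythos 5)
Your verification is correct and is exactly the argument the paper intends: the paper gives no explicit proof, stating only that it is a straightforward block computation using the definition of $\tau$, which is what you carry out. The computation of $SB$ is superfluous (only $BS$ and $SBS$ are needed), but the essential step $T\Sigma K T=\Sigma K T^2=\Sigma K T$ is identified correctly.
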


\begin{lemma} Let $B\in \mathbb{C}^{n\times n}$ be such that $B=PJP^{-1}$, where $J$ is the Jordan canonical form  of $B$. Then $S$ 
is a solution of $\begin{cases}
                                         B X=XB \\
                                         X^2= X
                                        \end{cases}$  if and only if $S=PTP^{-1}$, with $T\in\delta_J$.

\end{lemma}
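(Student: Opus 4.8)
The plan is to reduce the matrix equation system to a conjugation-invariant statement about projectors that commute with $J$, using the conjugation $X \mapsto P^{-1}XP$ as a dictionary between $B$ and its Jordan form $J$. The key observation is that conjugation by $P$ is a bijection on $\mathbb{C}^{n\times n}$ that preserves both defining equations of the system: the idempotency equation $X^2=X$ is preserved because $(P^{-1}XP)^2 = P^{-1}X^2P$, and the commutation $BX=XB$ translates exactly into $JT=TJ$ when we substitute $B=PJP^{-1}$ and $X=PTP^{-1}$.

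First I would prove the ``if'' direction. Assume $T\in\delta_J$, so $T^2=T$ and $TJ=JT$, and set $S=PTP^{-1}$. Then $S^2 = PT^2P^{-1} = PTP^{-1} = S$, so $S$ is idempotent. For the commutation, compute
\begin{equation*}
BS = (PJP^{-1})(PTP^{-1}) = PJTP^{-1} = PTJP^{-1} = (PTP^{-1})(PJP^{-1}) = SB,
\end{equation*}
where the middle equality uses $JT=TJ$. Hence $S$ solves the system.

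Next I would prove the ``only if'' direction. Suppose $S$ solves the system, and define $T=P^{-1}SP$, so that $S=PTP^{-1}$. From $S^2=S$ we get $PT^2P^{-1}=PTP^{-1}$, and since $P$ is nonsingular this yields $T^2=T$. From $BS=SB$, substituting $B=PJP^{-1}$ and $S=PTP^{-1}$, we obtain $PJTP^{-1}=PTJP^{-1}$, and cancelling the invertible factors $P$ and $P^{-1}$ on each side gives $JT=TJ$. Therefore $T\in\delta_J$, and $S=PTP^{-1}$ has the asserted form.

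I do not expect a genuine obstacle here: unlike the earlier lemmas, the matrix $B$ is an arbitrary element of $\mathbb{C}^{n\times n}$ (not restricted to $\mathbb{C}_1^n$ nor assumed EP), and $J$ is its full Jordan canonical form rather than the Jordan form of a block $\Sigma K$, so no Hartwig--Spindelböck structure or block bookkeeping is needed. The only point to watch is that the map $T\mapsto PTP^{-1}$ is a genuine bijection onto the solution set, which is immediate from the invertibility of $P$; this is essentially the same mechanism as the isomorphism $\psi$ of Proposition \ref{isomorfismo-con-delta}, now applied directly to the solution set of the system rather than to the poset of projectors. The entire argument is two short conjugation computations, so the main care is simply to state each direction cleanly.
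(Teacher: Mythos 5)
Your argument is correct and is essentially identical to the paper's proof: both reduce the system to the defining conditions of $\delta_J$ by conjugating with $P$, observing that $BS=SB$ is equivalent to $J(P^{-1}SP)=(P^{-1}SP)J$ and $S^2=S$ is equivalent to $(P^{-1}SP)^2=P^{-1}SP$. No differences worth noting.
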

\begin{proof} It follows easily by using the definition of $\delta_J$ and taking into account that $B S= SB$ iff $PJP^{-1} P P^{-1} S = SP P^{-1} P J P^{-1}$
iff $J (P^{-1} S P)= (P^{-1} S P) J$, and $S^2=S$ iff $(P^{-1} S P)^2=P^{-1} S P$.
\end{proof}

\begin{corollary} 
Let $B\in \mathbb{C}^{n\times n}$ be a nonsingular matrix such that has only one eigenvalue $\lambda$.
\begin{enumerate}[(a)]
	\item If the geometric multiplicity of $\lambda$ is 1 and $S$ is a solution of $\begin{cases}
                                         B X=XB \\
                                         X^2= X
                                        \end{cases}$   then $S\in \{O, I_n\}$.
   	 
 \item  If the geometric multiplicity of $\lambda$ is 2, that is $B= P \textrm{diag}(J_1(\lambda),J_2(\lambda))P^{-1}$, where $J_1(\lambda)\in \mathbb{C}^{q\times q}$ and $S$ is a solution of $\begin{cases}
                                         B X=XB \\
                                         X^2= X
                                        \end{cases}$ then $S=O$ or the geometric multiplicity of the eigenvalue 1 of $S$ is $q$, $n-q$, or $n$.
																				
																				\end{enumerate}
\end{corollary}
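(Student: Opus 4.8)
The plan is to reduce the system $BX=XB$, $X^2=X$ to the poset $\delta_J$ by means of the lemma immediately preceding this corollary, and then to read off the admissible ranks from the structural results already established. Since $B$ is nonsingular, its single eigenvalue $\lambda$ is nonzero and its Jordan canonical form is $J=J(\lambda)\in\mathbb{C}^{n\times n}$. By that preceding lemma, a matrix $S$ solves the system if and only if $S=PTP^{-1}$ for some $T\in\delta_J$. In particular $S$ is idempotent and similar to $T$, so $\rk(S)=\rk(T)$; this similarity is the bridge that lets us transfer all spectral data of $S$ to the structural information we have about $\delta_J$.

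For part (a), I would invoke directly the theorem describing $\delta_J$ when $J$ has a single eigenvalue of geometric multiplicity $1$, which asserts $\delta_J=\{O,I_n\}$. Conjugating by $P$ preserves $O$ and $I_n$, so $S=PTP^{-1}\in\{O,I_n\}$, as claimed.

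For part (b), the first step is to observe that an idempotent matrix is diagonalizable with eigenvalues in $\{0,1\}$, whence the geometric multiplicity of the eigenvalue $1$ of $S$ equals $\rk(S)$, which in turn equals $\rk(T)$ by the similarity noted above. The second step is to apply Lemma \ref{lemmarangos} to $J=\textrm{diag}(J_1(\lambda),J_2(\lambda))$ with $J_1(\lambda)\in\mathbb{C}^{q\times q}$ and $J_2(\lambda)\in\mathbb{C}^{(n-q)\times(n-q)}$; after the harmless relabeling that puts the larger block first (the conclusion is symmetric in $q$ and $n-q$), Lemma \ref{lemmarangos} gives $\rk(T)\in\{0,\,n-q,\,q,\,n\}$ when the blocks have distinct sizes and $\rk(T)\in\{0,\,n/2,\,n\}$ when $q=n-q$. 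Discarding the value $0$, which corresponds exactly to $S=O$ (the case in which $1$ is not an eigenvalue of $S$), leaves $q$, $n-q$, or $n$ for the geometric multiplicity of the eigenvalue $1$ of $S$.

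There is no genuine obstacle here: the argument is a direct application of the preceding lemma together with the already-proved description of $\delta_J$ and Lemma \ref{lemmarangos}. The only two points worth recording explicitly are the identification of the geometric multiplicity of the eigenvalue $1$ of a projector with its rank, and the matching of the size $p=n-q$ appearing in Lemma \ref{lemmarangos} with the quantity $n-q$ in the statement; once these are in place, both parts follow immediately.
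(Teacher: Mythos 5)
Your proposal is correct and follows exactly the route the paper intends: reduce via the preceding lemma to a projector $T\in\delta_J$ conjugate to $S$, then read part (a) off the theorem giving $\delta_J=\{O,I_n\}$ for geometric multiplicity $1$, and part (b) off Lemma \ref{lemmarangos} after identifying the geometric multiplicity of the eigenvalue $1$ of an idempotent with its rank. The paper states the corollary without proof, and your explicit write-up, including the remark about the harmless relabeling so that the larger block comes first, supplies precisely the omitted details.
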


\begin{corollary}
Let $B\in \mathbb{C}^{n\times n}$ be a nonsingular matrix with Jordan decomposition $B=PJP^{-1}$, where $J$ has only one Jordan block for each eigenvalue, that is $t_j=1$ in (\ref{Jordan Form 2}) for all $j\in \{1,\dots, s\}$. Then, $S$ is a projector that commute with $B$ if and only if $S=P\textrm{diag}(D_1, \cdots, D_s)P^{-1}$, where $D_i\in \{O, I_{r_i}\}$. Thus, there exist $2^s$ projectors that commute with $B$.
\end{corollary}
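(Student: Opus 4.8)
The plan is to chain the immediately preceding lemma together with the structural description of matrices commuting with a Jordan form. First I would invoke that lemma, which asserts that $S$ is a projector commuting with $B=PJP^{-1}$ if and only if $S=PTP^{-1}$ for some $T\in\delta_J$. This reduces the entire problem to describing the idempotents $T$ that commute with $J$, and it is exactly at this point that the hypothesis $t_j=1$ will do the work.

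Next I would apply the characterization (\ref{TJ=JT}) to write any such $T$ as $T=\textrm{diag}(D_1,\dots,D_s)$, where each $D_j$ commutes with the block $J(\lambda_j)$. Since each $J(\lambda_j)=J_1(\lambda_j)\in\Crjrj$ is a single Jordan block (because $t_j=1$), the description (\ref{cullen}) collapses to the case of one diagonal submatrix, forcing $D_j=R_{11}$ to be a regular upper triangular matrix. Because $T$ is block diagonal, the idempotency condition $T^2=T$ is equivalent to $D_j^2=D_j$ for every $j$, so each $D_j$ is an idempotent RUTM. By Remark \ref{idempotente RUTM}, the only such matrices are $D_j\in\{O,I_{r_j}\}$.

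Finally I would assemble the conclusion $S=P\,\textrm{diag}(D_1,\dots,D_s)P^{-1}$ with each $D_j\in\{O,I_{r_j}\}$, and conversely verify directly that every such $S$ satisfies $S^2=S$ and $BS=SB$ using $S^2=PT^2P^{-1}=S$ and $SB=PTJP^{-1}=PJTP^{-1}=BS$. Counting the two independent choices $O$ or $I_{r_j}$ for each of the $s$ blocks then yields exactly $2^s$ projectors. I do not anticipate a real obstacle, as the statement is a direct specialization of the earlier machinery; the only point meriting a moment's care is confirming that idempotency of the block-diagonal $T$ passes blockwise to each $D_j$, which is immediate from the block structure.
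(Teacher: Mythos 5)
Your proposal is correct and follows essentially the same route the paper intends: reduce via the preceding lemma to describing idempotents in $\delta_J$, then use the characterization (\ref{TJ=JT})--(\ref{cullen}) with $t_j=1$ to force each diagonal block to be an idempotent RUTM, hence $O$ or $I_{r_j}$ by Remark \ref{idempotente RUTM}. This is precisely the machinery behind the paper's earlier results (e.g.\ Corollary \ref{sharpBoolean}), so no further comment is needed.
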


\begin{remark} As we have characterized matrices $A$ that are below $B$ under the sharp partial order, we also have characterized solutions of $X^2= XB=BX$ in the case that $B\in \mathbb{C}_1^{n}$ is nonsingular.
\end{remark}

\section*{Declaration of competing interest}

There is no competing interest to declare.
\section*{Data availability}
No data was used for the research described in the article.

\section*{Funding} 
The authors were partially supported by projects PGI 24/L128 and PGI 24/ZL22, Departamento de Matemática, Universidad Nacional del Sur (UNS), Argentina. The third author was partially supported by Ministerio de Ciencia, Innovaci\'on y Universidades of Spain (Grant REDES DE INVESTIGACI\'ON, MICINN-RED2022-134176-T), by Universidad Nacional de Río Cuarto (Grant PPI 083/2020), and by Universidad Nacional de La Pampa, Facultad de Ingeniería (Grant Resol. Nro. 135/19).


\begin{thebibliography}{10}

\bibitem{BaTr}
Oskar~Maria Baksalary and G\"{o}tz Trenkler.
\newblock Core inverse of matrices.
\newblock {\em Linear Multilinear Algebra}, 58(5-6):681--697, 2010.

\bibitem{Sanka}
Stanley Burris and H.~P. Sankappanavar.
\newblock {\em A course in universal algebra}, volume~78 of {\em Graduate Texts
  in Mathematics}.
\newblock Springer-Verlag, New York-Berlin, 1981.

\bibitem{CiRuSaTh}
C.~R. Cimadamore, L.~A. Rueda, L.~Sauras-Altuzarra, and N.~Thome.
\newblock Lattice properties of partial orders for complex matrices via
  orthogonal projectors.
\newblock {\em Linear Multilinear Algebra}, 72(5):718--736, 2024.

\bibitem{CiRuThVe}
C.~R. Cimadamore, L.~A. Rueda, N.~Thome, and M.~Verdecchia.
\newblock Some results on the left sharp partial order and the right sharp
  partial order for matrices.
\newblock {\em Revista Matemática Aplicada, Computacional e Industrial
  (MACI)}, 9:30--33, 2023.

\bibitem{Cu}
Charles~G. Cullen.
\newblock {\em Matrices and linear transformations}.
\newblock Addison-Wesley Publishing Co., Reading, Mass.-London-Don Mills, Ont.,
  second edition, 1972.

\bibitem{CvMoWe}
Dragana~S. Cvetkovi\'{c}-Ili\'{c}, Dijana Mosi\'{c}, and Yimin Wei.
\newblock Partial orders on {$\mathcal{B}(\mathcal{H})$}.
\newblock {\em Linear Algebra Appl.}, 481:115--130, 2015.

\bibitem{FeLeTh}
D.~E. Ferreyra, F.~E. Levis, and N.~Thome.
\newblock Characterizations of {$k$}-commutative equalities for some outer
  generalized inverses.
\newblock {\em Linear Multilinear Algebra}, 68(1):177--192, 2020.

\bibitem{Grob}
J\"{u}rgen Gro{\ss}.
\newblock Remarks on the sharp partial order and the ordering of squares of
  matrices.
\newblock {\em Linear Algebra Appl.}, 417(1):87--93, 2006.

\bibitem{HaSp}
Robert~E. Hartwig and Klaus Spindelb\"{o}ck.
\newblock Matrices for which {$A\sp{\ast} $} and {$A\sp{\dag }$} commute.
\newblock {\em Linear and Multilinear Algebra}, 14(3):241--256, 1983.

\bibitem{ShSi}
Shani Jose and K.C. Sivakumar.
\newblock On partial orders of {H}ilbert space operators.
\newblock {\em Linear and Multilinear Algebra}, 63(7):1423--1441, 2015.

\bibitem{MaRuTh}
Saroj~B. Malik, Laura Rueda, and N\'{e}stor Thome.
\newblock Further properties on the core partial order and other matrix partial
  orders.
\newblock {\em Linear Multilinear Algebra}, 62(12):1629--1648, 2014.

\bibitem{Mar}
Janko Marovt.
\newblock On partial orders in {R}ickart rings.
\newblock {\em Linear Multilinear Algebra}, 63(9):1707--1723, 2015.

\bibitem{Marvot2}
Janko Marovt.
\newblock On star, sharp, core, and minus partial orders in {R}ickart rings.
\newblock {\em Banach J. Math. Anal.}, 10(3):495--508, 2016.

\bibitem{Marovt3}
Janko Marovt.
\newblock Orders in rings based on the core-nilpotent decomposition.
\newblock {\em Linear Multilinear Algebra}, 66(4):803--820, 2018.

\bibitem{Mi}
Sujit~Kumar Mitra.
\newblock On group inverses and the sharp order.
\newblock {\em Linear Algebra Appl.}, 92:17--37, 1987.

\bibitem{MiBhMa}
Sujit~Kumar Mitra, P.~Bhimasankaram, and Saroj~B. Malik.
\newblock {\em Matrix partial orders, shorted operators and applications},
  volume~10 of {\em Series in Algebra}.
\newblock World Scientific Publishing Co. Pte. Ltd., Hackensack, NJ, 2010.

\bibitem{RaDj}
Dragan~S. Raki\'{c} and Dragan~S. Djordjevi\'{c}.
\newblock Star, sharp, core and dual core partial order in rings with
  involution.
\newblock {\em Appl. Math. Comput.}, 259:800--818, 2015.

\bibitem{RaoMi}
C.~Radhakrishna Rao and Sujit~Kumar Mitra.
\newblock {\em Generalized inverse of matrices and its applications}.
\newblock John Wiley \& Sons, Inc., New York-London-Sydney, 1971.

\bibitem{Semrl}
Peter \v{S}emrl.
\newblock Automorphism of {$B(H)$} with respect to minus partial order.
\newblock {\em J. Math. Anal. Appl.}, 369(1):205--213, 2010.

\end{thebibliography}
\end{document}